\newtheorem{theorem}{Theorem}
\newtheorem{proposition}[theorem]{Proposition}
\newtheorem{lemma}[theorem]{Lemma}
\newtheorem{corollary}[theorem]{Corollary}
\newtheorem{conjecture}[theorem]{Conjecture}
\theoremstyle{definition}
\newtheorem{definition}[theorem]{Definition}
\newtheorem{example}[theorem]{Example}
\newtheorem{remark}[theorem]{Remark}
\definecolor{lightblue}{rgb}{0.8,0.8,1.0}
\definecolor{lightgreen}{rgb}{0.8,1.0,0.8}
\definecolor{pBlue}{RGB}{120,160,230}
\definecolor{pCyan}{RGB}{149,186,201}
\definecolor{pSand}{RGB}{184,166,121}
\definecolor{pAlgae}{RGB}{87,115,135}
\definecolor{pSkin}{RGB}{236,216,167}
\definecolor{pGray}{RGB}{156,175,156}
\definecolor{pPink}{RGB}{215,114,127}
\definecolor{pOrange}{RGB}{211,153,80}
\definecolor{coral}{RGB}{252, 53, 166}
\definecolor{magenta}{RGB}{94, 30, 107}
\definecolor{teal}{RGB}{188, 244, 247}
\definecolor{pea}{RGB}{146, 252, 148}
\DeclareMathOperator{\pair}{pair}
\DeclareMathOperator{\boun}{bd}
\DeclareMathOperator{\invar}{pr}
\DeclareMathOperator{\height}{ht}
\DeclareMathOperator{\width}{width}
\newcommand{\bw}[1]{\mathtt{#1}} 
\NewDocumentCommand\qbinom{O{q} m m o}{
\ensuremath{\IfNoValueTF{#4}{
{\genfrac{[}{]}{0pt}{}{#2}{#3}_{#1}}
}{
{\genfrac{[}{]}{0pt}{}{#2}{#3}_{#1}^{#4}}
}}}
\newcommand{\defin}[1]{%
\relax\ifmmode%
\textcolor{blue}{#1}%
\else\textcolor{blue}{\emph{#1}}%
\fi%
}
\newcommand{\thsup}{\textnormal{th}}
\newcommand{\bit}{\mathtt{b}}
\newcommand{\setN}{\mathbb{N}}
\newcommand{\setR}{\mathbb{R}}
\newcommand{\setZ}{\mathbb{Z}}
\newcommand{\setQ}{\mathbb{Q}}
\newcommand{\setP}{\mathbb{N}_{>0}} 
\newcommand{\avec}{\mathbf{a}}
\newcommand{\bvec}{\mathbf{b}}
\newcommand{\xvec}{\mathbf{x}}
\newcommand{\yvec}{\mathbf{y}}
\newcommand{\uvec}{\mathbf{u}}
\newcommand{\vvec}{\mathbf{v}}
\newcommand{\wvec}{\mathbf{w}}
\newcommand{\RT}{\mathrm{RT}} 
\newcommand{\SSYT}{\mathrm{SSYT}}
\newcommand{\CSSYT}{\mathrm{CSSYT}}
\newcommand{\monomial}{\mathrm{m}}
\newcommand{\qmonom}{\mathrm{M}}
\newcommand{\schurS}{\mathrm{s}}
\newcommand{\completeH}{\mathrm{h}}
\newcommand{\powersum}{\mathrm{p}}
\newcommand{\symS}{\mathfrak{S}}
\newcommand{\rel}{\mathcal{R}}	
\newcommand*{\orderPresF}{\mathcal{A}} 
\newcommand*{\equalOrderPresF}{\mathcal{A}^{=}}
\newcommand{\poseta}{a}
\newcommand{\posetb}{b}
\newcommand*{\equalK}[1]{K^{=}_{#1}}
\newcommand*{\strictK}[1]{K^{<}_{#1}}
\newcommand{\opsurj}{\mathcal{O}} 
\newcommand*{\erel}[1]{{\tilde{#1}}} 
\newcommand*{\mecs}[2]{m_{#1}(#2)} 
\DeclareMathOperator{\length}{\ell}
\tikzset{every picture/.append
  style={scale=1,
	baseline=(current bounding box.center),
	x=1em,
	y=1em,
	thinLine/.style={line width=0.7pt},
	thickLine/.style={line width=1.4pt,double,line join=round},
	entries/.style={xshift=-0.5em,yshift=-0.5em,font=\small},
	circled/.style={circle,draw,inner sep=1pt,minimum size=1.4em}
	}
}
\title{Cylindric Schur functions}
\author{Per Alexandersson}
\address{Department of Mathematics, Stockholm University, SE-106 91 Stockholm, Sweden}
\email{per.w.alexandersson@gmail.com}
\author{Ezgi Kantarci Oğuz}
\address{Department of Mathematics, Galatasaray University, İstanbul, Turkey}
\email{ezgikantarcioguz@gmail.com}
\begin{document}
\begin{abstract}
We generalize several classical results about Schur functions
to the family of cylindric Schur functions.
First, we give a combinatorial proof of a Murnaghan--Nakayama
formula for expanding cylindric Schur functions in
the power-sum basis.
We also explore some cases where this formula is cancellation-free.

The second result is polynomiality of Kostka coefficients
associated with stretched row-flagged
skew Schur functions. This implies polynomiality
of stretched cylindric Kostka coefficients.
This generalizes a result by E.~Rassart from 2004.

Finally, we also show the saturation property for
the row-flagged skew Kostka coefficients
which also implies the saturation property for cylindric
Schur functions.
\end{abstract}

\maketitle

\setcounter{tocdepth}{1}
\tableofcontents

\section{Introduction}

Cylindric Schur functions (defined further down in \cref{def:cylindricSkewSchur}) 
are a family of symmetric functions that were introduced by A.~Postnikov in \cite{Postnikov2005},
and generalize the classical Schur functions.
The main application of cylindric Schur functions is their
connection with Gromow--Witten invariants.
Postnikov conjectured that some of these cylindric Schur functions
(so called \emph{toric Schur polynomials}) are the Frobenius 
characteristic of certain \emph{toric Specht modules}.
The characters of these modules are given as the
coefficients of the power-sum expansion of the toric Specht modules and
it is therefore of interest to find a combinatorial formula for these.

\subsection{The cylindric Murnaghan--Nakayama rule}

By definition, characters of toric Specht modules generalize the classical
irreducible characters for the symmetric group, $\chi^{\lambda}$.
Such irreducible characters can be computed via the
Murnaghan--Nakayama rule \cite{Murnaghan1937,Nakayama1940}.
It is therefore natural to ask for a generalization of this 
rule in the more general context of cylindric Schur functions.
In this paper, we give a combinatorial proof of such a formula in \cref{thm:cylindricMNRule}.
A similar formula seems to appear in \cite[Eq. (145)]{KorffPalazzo2020}.

\subsection{Cancellations in the Murnaghan--Nakayama rule}

The Murnaghan--Nakayama rule for computing $\chi^{\lambda}(\mu)$ is a signed sum.
However, when all parts of $\mu$ are of the same size, this sum is cancellation-free.
One may ask if the cylindric Murnaghan--Nakayama rule is cancellation-free for such
shapes, but this is not true in general.
However, when the inner boundary of the cylindric diagram is
a staircase, the Murnaghan--Nakayama rule is
cancellation-free, see \cref{cor:cancellationFree}.
Note that most of the existing literature on tilings of regions is focused
on simply connected regions and does not apply
to cylindric shapes as they are not simply connected in general.

\subsection{Flagged and cylindric Gelfand--Tsetlin patterns}

In the last few sections of the paper, we explore Gelfand--Tsetlin
patterns arising from flagged skew Schur functions and
cylindric skew Schur functions, respectively.
These patterns (certain arrays of integers)
are in natural bijection with flagged (or cylindric) semi-standard Young tableaux.
One can view the Gelfand--Tsetlin patterns as lattice points in
so-called Gelfand--Tsetlin polytopes.

Each cylindric skew Schur function can be expressed as a non-negative sum of
flagged skew Schur functions in a natural manner.
Hence, several natural properties of flagged skew Schur functions
carry over to cylindic Schur functions.

Two such properties are \emph{saturation of Kostka coefficients}
and \emph{polynomiality of stretched Kostka coefficients},
which we prove in \cref{cor:flaggedKostkaPolynomial} and \cref{cor:cylindricKostkaPolynomial}.
These properties are closely related to properties of the corresponding
Gelfand--Tsetlin polytopes. The first property
is implied by the fact that every non-empty (flagged or cylindric)
Gelfand--Tsetlin polytope contains a lattice point,
see \cref{thm:flagSaturation,thm:cylindricSaturation}.
The second property is exactly period collapse of the
corresponding Ehrhart (quasi)polynomial.

\section{Preliminaries}

\subsection{Diagrams}\label{sec:cylindricSkew}

For an integer partition $\lambda_1 \geq \lambda_2 \geq \dotsb \geq \lambda_\ell$,
the corresponding \defin{Young diagram} is the set of coordinates
$\{(i,j) : 1\leq i \leq \lambda_j, 1 \leq j \leq \ell \} \subseteq \setZ^2$.
We call these coordinates \defin{boxes} and display them as so (French convention).
If $\mu$ is another partition with $\mu_i \leq \lambda_i$
for all $i$, we write $\mu \subseteq \lambda$
and we let $\lambda/\mu$ denote the set of boxes in $\lambda$
but not in $\mu$.
For example, the diagrams $(4,4,3,1)$
and $(4,4,3,1)/(2,2)$ are rendered as follows:
\[
\ytableausetup{boxsize=0.8em}
 \ydiagram{1,1,3,4,4}
 \qquad
  \ydiagram{1,1,3,2+2,2+2}
\]
\medskip

We may also describe a Young diagram by describing its outer boundary.
We let $\bw{0}$'s indicate right steps and $\bw{1}$'s correspond to down steps.
For example, the outer boundary of the partition $64442$ is 
described by the binary word $\bw{\dotsc 111 \; 00100.111001 \; 000 \dotsc}$ as seen below:
\begin{center}
\includegraphics[scale=1,page=1]{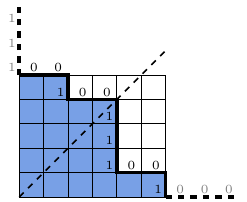}
\end{center}
We mark the position in the word where the boundary path passes the line $x=y$.

\subsection{Cylindric diagrams}

We now consider Young diagrams drawn on a cylinder,
where the top and bottom (or left and right side) 
of the diagram are glued together. 
Let $\mathfrak{C}_{x,y}$ denote the cylindric lattice structure 
given by identifying $(0,0)$ and $(x,-y)$ in $\mathbb{Z}^2$.
A lattice path from $(0,0)$ to  $(x,-y)$ in $\mathbb{Z}^2$, or any translate of 
it becomes a single loop in $\mathfrak{C}_{x,y}$, which we call 
 a \defin{boundary loop}.

A \defin{cylindric diagram} $D$ is a pair of boundary loops, $B_I$ and $B_O$
such that $B_I$ lies weakly left/below of $B_O$. We may assume that the starting point of $B_I$ is $(0,0)$ and
the starting point of $B_O$ is some point $(l,l)$ with $l\geq 0$.
We refer to $B_I$ as the \defin{inner boundary} and $B_O$ as the \defin{outer boundary}.
One can view a boundary loop as a binary word with $x$ $\bw{0}$'s
$y$ $\bw{1}$'s together with a starting point in $\mathfrak{C}_{x,y}$.
We use the convention of indexing the entries of boundary words modulo $x+y$, 
starting with the entry following the $x=y$ line.

\begin{example}
In Figure~\ref{fig:cylindricDiagramExample}, we see an example of a 
cylindric diagram in $\mathfrak{C}_{6,5}$. 
The inner boundary starts at $(0,0)$ and has boundary word $\wvec_I=\bw{00101101100}$.
The outer boundary starts at $(4,4)$ and has boundary word $\wvec_O=\bw{10001001110}$.
\begin{figure}[!ht]
\begin{center}
\includegraphics[scale=0.8,page=2]{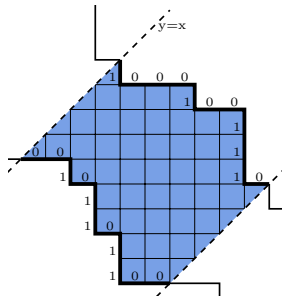}
\end{center}
\caption{An inner and outer boundary loop for $\mathfrak{C}_{6,5}$.}\label{fig:cylindricDiagramExample}
\end{figure}

\end{example}

Unlike in the non-cylindrical setting, the starting points are necessary, as the boundary words
alone do not uniquely determine the shape.  In fact, there are infinitely many diagrams that share
the same inner and outer boundary.

\begin{figure}[!ht]
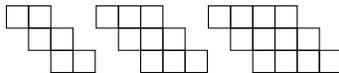

\centering
\ytableausetup{smalltableaux}
\ydiagram{2,1+2,2+2} \ydiagram{3,1+3,2+3} \ydiagram{4,1+4,2+4}
    \caption{All the given diagrams have both inner and outer boundary words of form $\bw{101010}$ or $\bw{010101}$ .}
    \label{fig:boundary}
\end{figure}

We identify a cylindrical diagram $D$ with the set of unit boxes in 
rows $-1$ to $-h$ that lie between the two boundary lines. 
The boxes form a skew diagram $\lambda/\mu$ that we refer to as the \defin{shape} of $D$. 
Note that any Young diagram can be realized as a cylindric diagram with some empty rows.

Though our definition is coordinate based, essentially all
of the properties we  work with in this paper
are translation independent.
When the starting point is not important, we depict the cylindric diagrams by
only drawing their shape, using dots to show
where bottom row and top row are glued.

For example, the cylindric diagram $D$  from Figure ~\ref{fig:cylindricDiagramExample} can alternatively be described 
either by taking five consecutive rows and indicating 
where the last row should attach to the first row.
A similar constriction works for columns:
\begin{equation}
 \ytableausetup{boxsize=0.5em}
\begin{ytableau}[*(pBlue)]
\none[\cdot] & \none[\cdot]& \none[\cdot]& \none[\cdot]& \none[\cdot]& \none[\cdot]& \none[\cdot]& \none[\cdot]& \none[\cdot]& \none[\cdot]& \none[\cdot]\\
\none & \none & \none & \none & & & &&&& \\ 
\none & \none & \none & \none &\none & &&&&&& \\ 
\none & \none & \none & \none &\none & &&&&&& &&& \\
\none & \none & \none & \none &\none & \none & &&&&&& &&& & \\
\none & \none & \none & \none & \none & \none & \cdot & \cdot & \cdot & \cdot & \cdot & \cdot & \cdot &\cdot & \cdot & \cdot & \cdot \\
 \end{ytableau} 
 \qquad 
 \begin{ytableau}[*(pBlue)]
\cdot \\
\cdot & & \\
\cdot & & \\
\cdot & & \\
\cdot & & & \\
\cdot & & & & & & \none[\cdot]\\
\cdot & & & & & & \none[\cdot]\\
\cdot & & & & & & \none[\cdot]\\
\none & \none & & & & & \none[\cdot]\\
\none & \none & \none & & & & \none[\cdot]\\
\none & \none & \none & & & & \none[\cdot]\\
\none & \none & \none & \none & & & \none[\cdot]\\
\none & \none & \none & \none & & & \none[\cdot]\\
 \end{ytableau}
\end{equation}
The reader is encouraged to verify that these indeed describe 
the same diagram.

\subsection{Ribbon diagrams in the non-cylindrical setting}

In this section we will give a very brief introduction to how ribbon diagrams work in 
the non-cylindrical case and give an overview of some key properties that we will use to 
contrast with their cylindrical analogue. 

A connected skew-diagram of $k$ boxes that contains no $2{\times}2$ block is called a \defin{$k$-ribbon}. 
The \defin{height} of a $k$-ribbon is given by the number of rows it intersects with minus $1$. 
A (possibly skew) Young diagram that contains no removable $k$-ribbon is called a \defin{k-core}.
For a given Young diagram $\lambda$, a $k$-ribbon filling of $\lambda$ is a 
sequence $\lambda_0\subset\lambda_1\subset\cdots\lambda_s=\lambda$ 
where $\lambda_0$ is the $k$-core and each $\lambda_i/\lambda_{i-1}$ is a $k$-ribbon. 
The partition $\lambda_0$ is independent of the choice of the ribbons and is called the \defin{$k$-core} of $\lambda$.

For a given $\lambda$ with boundary word $w$ and no $k$-core, the \defin{Littlewood quotient map}
sends $\lambda$ to the $k$-tuple $(\lambda_1,\lambda_2,\ldots,\lambda_{k})$ where $\lambda_i$ 
is given by the letters of $w$ whose indices equal $i$ modulo $k$ (We the convention that the 
index $0$ is given to the edge to the right of $x=y$ line). If the $k$-core is non-empty, 
we get the Littlewood quotient by using the same algorithm for the core, and 
skewing the $i^\thsup$ part of $\lambda$ by the $i^\thsup$ part of the core. 
The resulting $k$-tuple is called the \defin{$k$-quotient} of $\lambda$.

For the example $\lambda=(4,4,3,1)$ with boundary word $\bw{\ldots111  01001011  000\ldots}$, 
taking $k=4$ gives us the four words  $\bw{\ldots111000\ldots}$, $\bw{\ldots 11101000\ldots}$, 
$\bw{\ldots 111001000\ldots}$,  $\bw{\ldots 11101000\ldots}$. 

\[
\ytableausetup{boxsize=0.8em}
 \ydiagram{1,3,4,4}
 \qquad \rightarrow \qquad
  \left( \, \ydiagram{1} , \quad \varnothing, \quad \ydiagram{1}, \quad \ydiagram{1} \, \right)
\]

The Littlewood map gives a bijection between $k$-ribbon fillings of $\lambda$ and ``standard fillings'' of its quotient, 
fillings where each number from $1$ to the $(|\lambda|-|\lambda_0|)/k$ is used exactly once. 
In particular, removing a $k$-ribbon from the outer boundary corresponds to exchanging a $1$ in 
some position $w_{t+k}$ with a $0$ in position $w_t$ on the boundary word, which in turn corresponds to 
removing a box from $\lambda_i$, where $i\equiv t$ modulo $k$. 
Here, the bottom right box of the removed ribbon falls on the diagonal $x=y+t+k$, 
so $t+k$ is also called the \defin{diagonal value} of the ribbon and gives us a way to 
easily match ribbons to boxes in the quotient without looking at boundary words.

One can reach from any $k$-filling to any $k$-filling by doing pairwise \emph{flips}, 
which correspond to exchanging $t \leftrightarrow t+1$ in the $k$-core. 
The flipping operation always changes the total height by a multiple of two, so 
the parity of the total of the heights of $k$-ribbons is the same for each filling.


\begin{definition}[Comparison with cylindric ribbon diagrams]

A cylindric diagram on $\mathfrak{C}_{x,y}$ with $k\leq x+y$ boxes is called a \defin{$k$-ribbon}
if it is connected with no $2{\times}2$ block of boxes. 
A ribbon containing exactly $x+y$ boxes is called a \defin{loop ribbon}. 
The \defin{height} of a ribbon is given by the number of rows it intersects $-1$, except for 
loop ribbons. All loop ribbons have height $y$ (One can think of the height as counting pairs 
of consecutive rows, giving us the number of rows $-1$.
In the case of loops, the top and bottom
rows are also consecutive, so the height is the number of rows it intersects, which are all the rows of the shape).

A $k$-ribbon $R$ on a cylindric shape $D$ is called \defin{removable} if it 
has the same outer boundary as $R$ and $D/R$ is a cylindric diagram. 
A cylindric diagram with no removable $k$-ribbon is said to be a \defin{$k$-core}.

A cylindric $k$-ribbon diagram is a list of cylindric diagrams $D_0\subset D_1\subset\cdots\subset D_s=D$ 
where each $D_i/D_{i-1}$ is a removable $k$-ribbon and $D_0$ is a $k$-core.
\end{definition}


Ribbons in the cylindric setting lack many of the properties of their non-cylindric 
analogs which makes them tricky to work with. 
Firstly, we do not necessarily have a unique $k$-core.

\begin{itemize}
 \item The core may not have a fixed position.
  Here are two fillings with different $4$-cores:
 \begin{equation}\label{eq:core1}
 \begin{ytableau}
*(pSkin) \cdot &  *(pSkin) \cdot \\
\none &  &*(pBlue) \\
\none &\none & *(pBlue) & *(pBlue) & *(pSkin) \\
\none & \none & \none & *(pBlue)  & *(pSkin)\\
 \none & \none & \none &  \none & \none[\cdot] & \none[\cdot]  
\end{ytableau}
\qquad 
\begin{ytableau}
*(pSkin) \cdot &  *(pSkin) \cdot \\
\none & *(pSkin) & *(pBlue) \\
\none &\none & *(pBlue) & *(pBlue) & *(pBlue) \\
\none & \none & \none &  & *(pSkin)\\
\none & \none & \none &  \none & \none[\cdot] & \none[\cdot]  
\end{ytableau}
\end{equation}

 \item The core may not have a fixed shape.
Here are two fillings with different $5$-core shapes.
\begin{equation}\label{eq:core2}
 \begin{ytableau}
 \cdot & *(pBlue)  \cdot & *(pBlue)  \cdot \\
 \none &  & *(pBlue)   & *(pBlue)  \\
\none &\none &  & *(pBlue) & *(pPink) \\
\none & \none & \none &   & *(pPink)  & *(pPink)\\
\none & \none & \none & \none &   & *(pPink)  & *(pPink)\\
\none & \none & \none &  \none & \none& \none[\cdot] & \none[\cdot]& \none[\cdot] 
\end{ytableau}
\qquad 
 \begin{ytableau}
  \cdot & \cdot & *(pBlue)  \cdot \\
\none &  &   *(pBlue)  & *(pBlue)  \\
\none &\none &  & *(pBlue) & *(pPink) \\
\none & \none & \none & *(pBlue)  & *(pPink)  & *(pPink)\\
\none & \none & \none & \none &   & *(pPink)  & *(pPink)\\
\none & \none & \none &  \none & \none & \none[\cdot] & \none[\cdot]& \none[\cdot]  
\end{ytableau}
\end{equation}

\item The cores may not even have the same size.
Below are two fillings where only one having an empty $3$-core.
\begin{equation}\label{eq:core3}
 \begin{ytableau}
 \cdot & *(pBlue)\cdot &  *(pPink) \cdot \\
\none &  &   *(pPink)  & *(pPink)  \\
\none &\none &  & *(pBlue) & *(pBlue) \\
\none & \none & \none & \none[\cdot] & \none[\cdot]& \none[\cdot] 
\end{ytableau}
\qquad 
 \begin{ytableau}
 *(pPink)  \cdot & *(pPink) \cdot &  *(pBlue) \cdot \\
\none & *(pPink)   &   *(pBlue)  & *(pBlue)  \\
\none &\none &  *(pSkin)  & *(pSkin) & *(pSkin) \\
\none & \none & \none & \none[\cdot] & \none[\cdot]& \none[\cdot] 
\end{ytableau}
\end{equation}
\end{itemize}

Even if a shape can be fully tiled by $k$-ribbons as seen in \eqref{eq:core3} above, 
not every partial tiling can be completed to a full tiling.

Another difficulty stems from the fact that there is no natural analog for a $k$-quotient. 
If $k$ does not divide the number of rows times $2$, we do not have a way to partition the 
border modulo $k$ that is compatible with the cylindrical boundary.
Even when $k$ does divide the number of rows
and each ribbon has a modulo $k$ value, the $k$-quotient idea does not 
translate to the cylindrical case. For the example below, there are exactly three ways to place the two ribbons with different sets of diagonal values modulo $3$.
\begin{equation}\label{eq:3fills}
 \begin{ytableau}
 *(pBlue) \cdot & *(pBlue) \cdot \\
\none & *(pBlue)& *(pPink) \\
\none & \none & *(pPink) \; & *(pPink) \\
 \none & \none & \none & \none[\cdot] & \none[\cdot]
\end{ytableau}\
\qquad 
\begin{ytableau}
 *(pBlue) \cdot & *(pPink) \cdot \\
\none & *(pPink)& *(pPink) \\
\none & \none & *(pBlue) \; & *(pBlue) \\
\none & \none & \none & \none[\cdot] & \none[\cdot] 
\end{ytableau}
\qquad 
\begin{ytableau}
*(pPink) \cdot & *(pPink) \cdot \\
\none & *(pBlue)& *(pBlue) \\
\none & \none & *(pBlue) \; & *(pPink) \\
\none & \none & \none & \none[\cdot] & \none[\cdot] 
\end{ytableau}
\end{equation}

These fundamental differences mean that we need new techniques to investigate how the ribbon tilings behave in the cylindric setting.

\section{Quasisymmetric functions}

Our proof method of the cylindric Murnaghan--Nakayama
rule is based on the power-sum expansion of
linear combinations of so called \emph{$P$-partitions} introduced by R.~Stanley~\cite{StanleyEC2}.
Thus, we need to review several concepts before getting to the actual proof.

A \defin{quasisymmetric function} $F$ is a formal power series $F \in \setQ[[x_1,x_2,\dotsc]]$
such that the degree of $F$ is finite, and for every composition $(\alpha_1,\dotsc,\alpha_\ell)$
the coefficient of $x_{i_2}^{\alpha_2} \dotsm x_{i_\ell}^{\alpha_\ell}$ in $F$
is the same for all integer sequences $1\leq i_1 < i_2 < \dotsb < i_\ell$.
Given a composition $\alpha$ with $\ell$ parts, 
the \defin{monomial quasisymmetric function} $\qmonom_\alpha$
is defined as
\[
\qmonom_\alpha(\xvec) \coloneqq \sum_{i_1 < i_2 < \dotsb < i_\ell} 
x_{i_1}^{\alpha_1} x_{i_2}^{\alpha_2} \dotsm x_{i_\ell}^{\alpha_\ell}
\,.
\]
The set $\{\qmonom_\alpha\}_\alpha$ constitutes a basis for the space of 
homogeneous quasisymmetric functions of degree $n$ as $\alpha$ ranges over all compositions of $n$.

We shall also need a \emph{quasisymmetric power sum basis} for the space of quasisymmetric functions, 
introduced in \cite{BallantineDaughertyHicksMason2020}.
In order to define these, we need some additional notation. 
For a composition $\alpha$, let $\defin{z_\alpha} \coloneqq \prod_{i\geq1} i^{m_i} m_i!$,
where $m_i$ denotes the number of parts of $\alpha$ that are equal to $i$.
Given compositions $\alpha \leq \beta$ (denoting refinement)
so that $\beta = \beta_1 + \beta_2 + \dotsb + \beta_k$ and $\beta_i = \alpha_{i,1} + \alpha_{i,2} + \dotsb + \alpha_{i,\ell_i}$
for $1 \leq i \leq k$, let
\begin{equation}
\defin{\pi(\alpha,\beta)}
\coloneqq
\prod_{i=1}^{k} 
\alpha_{i,1} \cdot \left(\alpha_{i,1}+\alpha_{i,2}\right)\dotsm 
\left(\alpha_{i,1}+\alpha_{i,2} + \dotsb + \alpha_{i,\ell_i}  \right).
\end{equation}
The \defin{quasisymmetric power sum} $\Psi_\alpha$ is then defined as
\begin{equation}\label{eq:PsiintoMonomial}
\Psi_\alpha(\xvec) \coloneqq z_\alpha \sum_{\beta \geq \alpha } \frac{1}{\pi(\alpha,\beta)} \qmonom_\beta(\xvec).
\end{equation}
The quasisymmetric power sums refine the power sum symmetric functions; 
\begin{equation}\label{eq:powersum_Psi_expansion}
\powersum_\lambda(\xvec) = \sum_{\alpha \sim \lambda} \Psi_\alpha(\xvec)
\,,
\end{equation}
where the sum ranges over all compositions $\alpha$ whose parts rearrange to $\lambda$,
see \cite[Prop.~3.19]{BallantineDaughertyHicksMason2020}.

\subsection{Quasisymmetric functions from posets}

We shall now introduce a few types of quasisymmetric 
functions arising from partially ordered sets. 

\begin{definition}
Let $P$ be a poset and let $\rel(P)$ be the set of all pairs of related elements;
$\defin{\rel(P)} \coloneqq \{ (\poseta,\posetb) \in P : \poseta <_P \posetb \}$.
Given a poset $P$, the \defin{set of weakly order-preserving maps} from $P$ to the set of positive integers
is defined as 
\[
\defin{\orderPresF(P)} \coloneqq 
\{ f: P \to \setP : \poseta <_P \posetb \implies f(\poseta) \leq f(\posetb)\}.
\]
The \defin{P-partition generating function}, introduced by R.~Stanley~\cite{Stanley1972},
is defined as
\begin{equation}
 \defin{K_P(\xvec)} \coloneqq 
\sum_{\substack{f \in \orderPresF(P)}} \prod_{\poseta \in P} x_{f(\poseta)}.
\end{equation}
It is easy to see that these are indeed quasisymmetric functions---for more information
see \cite[Sec. 7.19]{StanleyEC2}.
\medskip 

Now, let $S \subseteq \rel(P)$ be any subset of related pairs in $P$ and 
define the following quasisymmetric functions:
\begin{equation*}
 \defin{\equalK{P,S}(\xvec)} \coloneqq \hspace{-7mm}
 \sum_{\substack{f \in \orderPresF(P) \\ (\poseta,\posetb)\in S \Rightarrow f(\poseta) = f(\posetb) }} 
 \prod_{\poseta \in P} x_{f(\poseta)}, 
 \quad 
 \defin{\strictK{P,S}(\xvec)} \coloneqq \hspace{-7mm}
 \sum_{\substack{f \in \orderPresF(P) \\ (\poseta,\posetb)\in S \Rightarrow f(\poseta) < f(\posetb) }} 
 \prod_{\poseta \in P} x_{f(\poseta)}.
\end{equation*}
The generating function $\strictK{P,S}(\xvec)$ was studied earlier in \cite{McNamara2006}, where the pair $(P,S)$
is referred to as an \emph{oriented poset}.
\end{definition}

We shall need the following simple observation in the upcoming proof.
\begin{lemma}
Let $P$ be a poset and $E \subseteq \rel(P)$. Then 
\begin{equation}\label{eq:inclExcl}
 \strictK{P,E}(\xvec) = \sum_{S \subseteq E} (-1)^{|S|} \equalK{P,S}(\xvec).
\end{equation}
\end{lemma}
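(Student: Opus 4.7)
The plan is to prove this by standard inclusion--exclusion applied term by term to the generating function. The key observation is that since $E \subseteq \rel(P)$, any $(\poseta,\posetb) \in E$ already satisfies $\poseta <_P \posetb$, and therefore any $f \in \orderPresF(P)$ automatically satisfies $f(\poseta) \leq f(\posetb)$. Hence the strict condition $f(\poseta) < f(\posetb)$ required by $\strictK{P,E}$ is equivalent to the single inequality $f(\poseta) \neq f(\posetb)$.

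Concretely, I would fix an arbitrary $f \in \orderPresF(P)$ and compare the coefficient of the monomial $\prod_{\poseta \in P} x_{f(\poseta)}$ on both sides of \eqref{eq:inclExcl}. On the left, this coefficient equals $1$ if $f(\poseta) \neq f(\posetb)$ for every $(\poseta,\posetb) \in E$, and $0$ otherwise. On the right, for a fixed subset $S \subseteq E$ the map $f$ contributes $(-1)^{|S|}$ to $\equalK{P,S}$ precisely when $f(\poseta) = f(\posetb)$ for every $(\poseta,\posetb) \in S$. Letting $T(f) \coloneqq \{(\poseta,\posetb) \in E : f(\poseta) = f(\posetb)\}$, the total contribution of $f$ to the right-hand side is
\[
\sum_{S \subseteq T(f)} (-1)^{|S|},
\]
which is $1$ if $T(f) = \emptyset$ and $0$ otherwise, by the standard fact that a nonempty finite set has equal numbers of subsets of even and odd size.

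These two coefficient computations agree in every case, and summing over all $f \in \orderPresF(P)$ yields the claimed identity. There is no real obstacle here: the only thing to verify carefully is that the weakly order-preserving property of $f$ makes ``strict'' equivalent to ``not equal'' on the pairs in $E$, which is the content of the first paragraph. Everything else is a direct application of inclusion--exclusion in its most elementary form.
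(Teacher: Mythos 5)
Your argument is correct and is exactly the inclusion--exclusion argument the paper invokes (its own proof is the one-line remark ``this is simply inclusion-exclusion''); you have merely written out the details, including the useful observation that weak order-preservation makes ``strict'' equivalent to ``not equal'' on pairs in $E$. No issues.
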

\begin{proof}
 This is simply inclusion-exclusion.
\end{proof}

A cylindric diagram $D$ defines a poset \defin{$P = P(D)$}, where each box is an element in $P$,
and covering relations are $\poseta \lessdot \posetb$ if $\poseta$ is a box to the left of $\posetb$,
or $\poseta$ is above $\posetb$ (in the cylindrical sense).

\begin{example}
In the following cylindric diagram, there are 13 boxes 
where the leftmost column wraps around as indicated by it being repeated.
\begin{figure}[!ht]
\[
 \ytableausetup{boxsize=1.2em}
\begin{ytableau}
\scriptstyle{13}  \\
\scriptstyle{12} \\ 
 8 & 9 & \scriptstyle{10} & \scriptstyle{11} \\
\none & \none &   6 &   7  \\
\none & \none &   3 &   4 &   5 &  *(pSkin) \scriptstyle{13}  \\
\none & \none & \none &   1 &   2 & *(pSkin) \scriptstyle{12}  \\
 \none & \none & \none &   \none &   \none & *(pSkin) 8
\end{ytableau}
\qquad 
P=
\begin{tikzpicture}[scale=1.6]
\draw
( 4,1)node[circled](v1){1}
( 6,1)node[circled](v3){3}
(10,1)node[circled](v8){8}

( 3, 2)node[circled](v2){2}
( 5, 2)node[circled](v4){4}
( 7, 2)node[circled](v6){6}
 (9,2)node[circled](v9){9}
 (11,2)node[circled](v12){$\scriptstyle{12}$}
 
 (4,3)node[circled](v5){5}
 ( 6,3)node[circled](v7){7}
 (8,3)node[circled](v10){$\scriptstyle{10}$}
 (12,3)node[circled](v13){$\scriptstyle{13}$}
 
(7,4)node[circled](v11){$\scriptstyle{11}$}
;
\draw (v1)--(v2);
\draw (v3)--(v4)--(v5);
\draw (v6)--(v7);
\draw (v8)--(v9)--(v10)--(v11);

 \draw (v13) .. controls (5,0) .. (v5);
 \draw (v12) .. controls (5,-1) .. (v2);

 \draw[thickLine] (v2)--(v5);
 \draw[thickLine] (v1)--(v4)--(v7)--(v11);
 \draw[thickLine] (v3)--(v6)--(v10);
 \draw[thickLine] (v8)--(v12)--(v13);
\end{tikzpicture}
\]
\caption{A cylindric diagram $D$ and its associated poset.
The reader is encouraged to draw $P(D)$ on a cylinder, where 
the relations $2 <_P 12$ and $5<_P 13$ ``wrap around''.
The strict edges are drawn as thick lines $D$.
}\label{fig:cylindricPoset}
\end{figure}
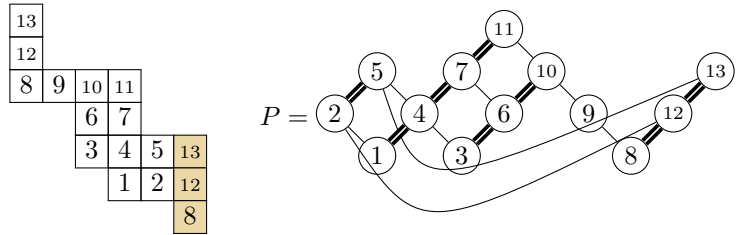
\end{example}

\begin{definition}\label{def:cylindricSkewSchur}
A \defin{cylindric semistandard Young tableau} (CSSYT) of shape $D$
is a filling of the boxes in the diagram $D$ with positive integers,
such that entries weakly increase as column index increases
and strictly increases as row index increases.
We let $\CSSYT(D)$ denote the set of all CSSYTs of shape $D$.

The \defin{cylindric skew Schur function} $\schurS_D(\xvec)$ associated 
with the diagram $D$ is then defined as 
\[
 \schurS_D(\xvec) = \sum_{T \in \CSSYT(D)} \xvec_{T}
\]
where $\xvec_T \coloneqq \prod_{(i,j) \in D} \xvec_{T(i,j)}$.
\end{definition}

\begin{example}
The following is a cylindric skew diagram with $7$ boxes,
followed by two cylindric skew semistandard Young tableau.
In the CSSYTs, the third box in the first row is 
the same as the box in the very bottom row.
\[
\ytableausetup{boxsize=1.0em}
\begin{ytableau}
*(pSkin) \cdot \\
*(pSkin) &  *(pSkin)  \\
\none & *(pSkin) & *(pSkin) \\
\none &   *(pSkin)  & *(pSkin)  & \none[\cdot] \\
\end{ytableau}
\qquad 
\begin{ytableau}
3   \\
1 &  4  \\
\none & 3 & 3 \\
\none &  1 & 2  & \color{gray}{3} \\
\end{ytableau}
\qquad 
\begin{ytableau}
3   \\
1 &  3  \\
\none & 2 & 2 \\
\none &  1 & 1  & \color{gray}{3} \\
\end{ytableau}
\]
The two CSSYT contribute with the monomials $x_1^2 x_2 x_3^3 x_4$ and $x_1^3 x_2^2 x_3^2$ to 
the corresponding cylindric skew Schur function.
The full expansion in the monomial basis is 
\[
 2 \monomial_{322}+\monomial_{331}+8 \monomial_{2221}+4 \monomial_{3211}+16 \monomial_{22111}+8 \monomial_{31111}+32 \monomial_{211111}+64 \monomial_{1111111}.
\]
\end{example}

We note that if we let $P=P(D)$ be the poset associated with $D$, 
and $E \subseteq \rel(P)$ be the edges \defin{strict edges}
corresponding to adjacent boxes in the same column (thick edges in
\cref{fig:cylindricPoset}).
It is then just a matter of unraveling the definitions, to observe that
\begin{equation}\label{eq:posetFromDiagram}
 \schurS_D(\xvec) = \strictK{P,E}(\xvec).
\end{equation}
We can now see that $\schurS_D(\xvec)$ is a quasisymmetric function.
In fact, it is a symmetric function, see \cite[Cor.5.3]{Postnikov2005}, 
and one can prove this by using a cylindric variant of Bender--Knuth involutions,
see \cite[Thm. 7.10.2]{StanleyEC2}.

\begin{remark}\label{rem:notPwPartition}
 In Stanley's theory of $(P,w)$-partitions, \cite[Sec. 7.19]{StanleyEC2}
 there is a generalization of $K_P(\xvec)$ which relies on a labeling $w$ of $P$---
 this labeling determines a subset of edges which are to be considered strict.
 The corresponding generating functions enjoy many nice properties,
 such as a positive expansion in the fundamental quasisymmetric basis.
 Unfortunately, $\schurS_D(\xvec)$ cannot in general be realized as such a generating function
 from a $(P,w)$-partition, as noted in \cite[Example 2.7]{McNamara2006}.
 For example, \cite[Thm. 5.7]{McNamara2006} states that only
 \emph{non-cylindric} skew shape diagrams (i.e., isomorphic to a regular skew shape)
 $D$ have an $\schurS_D(\xvec)$ which is positive in the fundamental quasisymmetric basis.
\end{remark}

\subsection{Chain congruences}

We shall now recall some terminology from \cite{AlexanderssonSulzgruber2019}.
Given $E \subseteq \rel(P)$, we let $\defin{\erel{E}} \supseteq E$ 
be defined via
\[
 (\poseta,\posetb) \in \erel{E} \iff f(\poseta)=f(\posetb)
 \text{ \emph{for all} } f \in \equalOrderPresF(P,E).
\]
(This is more than just taking the transitive closure of the relations in $E$; 
see \cref{ex:quotientPoset}.)
Note that for any $E \subseteq \rel(P)$, we have that
\[
 \equalK{P,E}(\xvec)=\equalK{P,\erel{E}}(\xvec).
\]
We say that $E$ is a \defin{chain congruence} (on $P$) if $E=\erel{E}$.
Moreover, $\erel{E}$ defines an equivalence relation on elements in $P$, where 
$\poseta$ and $\posetb$ are equivalent under $\erel{E}$ if $(\poseta,\posetb) \in \erel{E}$.
Finally, any subset $E \subseteq \rel(P)$ defines a \defin{quotient poset}, 
$\defin{P/\erel{E}}$, where the elements in $P/\erel{E}$ are the equivalence classes of $\erel{E}$,
and relations inherited from the original poset.
To be precise, if $\poseta', \posetb' \in P/\erel{E}$, then 
$\poseta'$ is less than $ \posetb'$ if and only if there are $\poseta, \posetb \in P$
with $\poseta <_P \posetb$, $\poseta$ belonging to the equivalence class $\poseta'$ and 
$\posetb$ being in the equivalence class $\posetb'$, see \cref{ex:quotientPoset}.

\begin{example}\label{ex:quotientPoset}
Let $P$ be the poset below, and $E = \{(1,7), (2,3),(4,6),(6,9)\}$
has been marked with thick edges.
\begin{equation}
P=
\begin{tikzpicture}
\draw
(1,1)node[circled](v1){1}
(3,1)node[circled](v2){2}
(2,3)node[circled](v3){3}
(1,5)node[circled](v4){4}
(3,5)node[circled](v5){5}
(0,7)node[circled](v6){6}
(0,3)node[circled](v7){7}
(2,7)node[circled](v8){8}
(1,9)node[circled](v9){9}
;
\draw (v7)--(v1)--(v3)--(v4)--(v8)--(v9);
\draw (v3)--(v5);
\draw[thickLine] (v2)--(v3);
\draw[thickLine] (v4)--(v6)--(v9);
\draw[thickLine] (v1)--(v7);
\end{tikzpicture}
\qquad
\qquad 
P/\erel{E} =
\begin{tikzpicture}
\draw
(1,1)node[circled](v1){$\scriptstyle{17}$}
(3,4)node[circled](v3){$\scriptstyle{23}$}
(1,7)node[circled](v4){$\scriptstyle{4689}$}
(5,7)node[circled](v5){5};
\draw (v1)--(v3)--(v4);
\draw (v3)--(v5);
\end{tikzpicture}
\end{equation}
Note that for any $f \in \equalOrderPresF(P,E)$ we have that $f(4)=f(6)=f(9)$,
and since $f(4) \leq f(8) \leq f(9)$, we must also always have $f(4)=f(8)=f(9)$.
Hence, $\erel{E}$ must contain $(4,8)$ and $(8,9)$.
In total, $\erel{E} = \{(1,7), (2,3),(4,6),(4,8),(4,9),(8,9)\}$
and $P/\erel{E}$ has four elements.
\end{example}

\subsection{Quasisymmetric powersum functions, again}

We shall need a few definitions in order to state the result we shall need.
Let $P$ be a poset on $n$ elements. Given a composition $\alpha \vDash n$
let $\defin{\opsurj_{\alpha}(P)}$ denote the set of \defin{order-preserving surjections}
$f:P\to [\length(\alpha)]$ such that $|f^{-1}(i)| = \alpha_i$ for $i=1,\dotsc,\length(\alpha)$,
and $\poseta <_P \posetb \implies f(\poseta)\leq f(\posetb)$.
Whenever $E \subseteq \rel(P)$, let
$\defin{\opsurj_{\alpha}(P,E)} \subseteq \opsurj_{\alpha}(P)$ denote the set 
of order-preserving surjections $f\in\opsurj_{\alpha}(P)$ 
which satisfy $f(\poseta)=f(\posetb)$ whenever $(\poseta,\posetb) \in E$.

Observe that if $E$ is a chain-congruence on $P$ and $f \in \opsurj_{\alpha}(P,E)$,
then each sub-poset $f^{-1}(i)$ is the union of some equivalence-classes 
of $E$. Moreover, each $f^{-1}(i)/\erel{E}$ is a sub-poset of $P/\erel{E}$.

We let $\defin{\mecs{E}{P}} \in \{0,1,2,\dotsc \}$ 
be $0$ unless $P/\erel{E}$ has a unique minimal element\footnote{Minimum for short.},
in which case we let $\mecs{E}{P}$
be the cardinality of the $\erel{E}$-equivalence class corresponding to this 
unique minimum in $P/\erel{E}$.
For example, $\mecs{E}{P}=2$ (the equivalence class $\{1,7\}$
is the unique minimum in $P/\erel{E}$)
in \cref{ex:quotientPoset}.

\begin{example}\label{ex:quotientPoset3}
Consider the following poset $P$, some relations $E \subset \rel(P)$, 
and $P/\erel{E}$ illustrated below:
\begin{equation*}
(P,E) = 
\begin{tikzpicture}
\draw
( 6,0)node[circled](v1){1}
( 8,2)node[circled](v2){3}
(10,0)node[circled](v3){2}
( 4,2)node[circled](v4){4}
( 6,4)node[circled](v5){5}
(10,4)node[circled](v6){6}
( 2,4)node[circled](v7){7}
( 4,6)node[circled](v8){8}
( 6,8)node[circled](v9){9}
( 0,6)node[circled](v0){0}
;
\draw (v1)--(v4);
\draw (v2)--(v5)--(v8)--(v7)--(v0);
\draw (v2)--(v6)--(v9);
\draw[thickLine] (v1)--(v2)--(v3);
\draw[thickLine] (v4)--(v7);
\draw[thickLine] (v8)--(v9);
\end{tikzpicture}
\qquad
P/\erel{E} = 
\begin{tikzpicture}
\draw
( 1,0)node[circled](v123){123}
(-2,3)node[circled](v47){47}
( 0,3)node[circled](v5){5}
( 2,3)node[circled](v6){6}
(-1,6)node[circled](v89){89}
(-4,6)node[circled](v0){0}
;
\draw (v123)--(v47)--(v0);
\draw (v123)--(v5)--(v89)--(v47);
\draw (v123)--(v6)--(v89);
\end{tikzpicture}
\end{equation*}
If $\alpha=(5,3,2)$, then $\opsurj_{\alpha}(P,E) = \{f_1, f_2\}$ has two elements
illustrated below:
\begin{equation*}
f_1 = 
\begin{tikzpicture}
\draw
( 6,0)node[circled,fill=pBlue](v1){1}
( 8,2)node[circled,fill=pBlue](v2){1}
(10,0)node[circled,fill=pBlue](v3){1}
( 4,2)node[circled,fill=pBlue](v4){1}
( 6,4)node[circled,fill=pSkin](v5){2}
(10,4)node[circled,fill=pSkin](v6){2}
( 2,4)node[circled,fill=pBlue](v7){1}
( 4,6)node[circled,fill=pAlgae](v8){3}
( 6,8)node[circled,fill=pAlgae](v9){3}
( 0,6)node[circled,fill=pSkin](v0){2}
;
\draw (v1)--(v4);
\draw (v2)--(v5)--(v8)--(v7)--(v0);
\draw (v2)--(v6)--(v9);
\draw[thickLine] (v1)--(v2)--(v3);
\draw[thickLine] (v4)--(v7);
\draw[thickLine] (v8)--(v9);
\end{tikzpicture}
\qquad 
f_2 = 
\begin{tikzpicture}
\draw
( 6,0)node[circled,fill=pBlue](v1){1}
( 8,2)node[circled,fill=pBlue](v2){1}
(10,0)node[circled,fill=pBlue](v3){1}
( 4,2)node[circled,fill=pSkin](v4){2}
( 6,4)node[circled,fill=pBlue](v5){1}
(10,4)node[circled,fill=pBlue](v6){1}
( 2,4)node[circled,fill=pSkin](v7){2}
( 4,6)node[circled,fill=pAlgae](v8){3}
( 6,8)node[circled,fill=pAlgae](v9){3}
( 0,6)node[circled,fill=pSkin](v0){2}
;
\draw (v1)--(v4);
\draw (v2)--(v5)--(v8)--(v7)--(v0);
\draw (v2)--(v6)--(v9);
\draw[thickLine] (v1)--(v2)--(v3);
\draw[thickLine] (v4)--(v7);
\draw[thickLine] (v8)--(v9);
\end{tikzpicture}
\end{equation*}
We see that $Q \coloneqq f^{-1}_1(1)$ is a subposet of $P$, induced by the elements $\{1,2,3,4,7\}$.
Moreover, $Q/\erel{E}$ is a subposet of $P/\erel{E}$, consisting of the two equivalence classes 
$\{123, 47\}$. We have that $\mecs{E}{ Q } = 3$, since $123$ is a 
unique minimal element in $P/\erel{E}$ and it has cardinality $3$.
However, the poset $f^{-1}_1(2)/\erel{E} = \{5,6,0\}$ does not have a unique minimum,
so $\mecs{E}{ f^{-1}_1(2) } = 0$.
The full picture is given as follows:
\begin{equation}\label{eq:mECalc}
\begin{aligned}
 \mecs{E}{ f^{-1}_1(1) } &= 3, & \mecs{E}{ f^{-1}_1(2) } &= 0,  & \mecs{E}{ f^{-1}_1(3) } &= 2, \\
 \mecs{E}{ f^{-1}_2(1) } &= 3, & \mecs{E}{ f^{-1}_2(2) } &= 2,  & \mecs{E}{ f^{-1}_2(3) } &= 2.
\end{aligned}
\end{equation}
\end{example}

We have now introduced all the necessary ingredients to state the following 
theorem, which gives the quasisymmetric power-sum expansion of $\equalK{P,E}(\xvec)$.
\begin{theorem}[{Alexandersson--Sulzgruber, \cite[Thm. 6.2]{AlexanderssonSulzgruber2019}}]\label{thm:ASFormula}
Let $P$ be a poset and let $E$ be a chain-congruence on $P$.
Then
\begin{equation}\label{eq:ASFormula}
\equalK{P,E}(\xvec)
=\sum_{\alpha\vDash n}
\frac{\Psi_{\alpha}(\xvec)}{z_{\alpha}}
\sum_{f\in\opsurj_{\alpha}(P,E)}
\prod_{j=1}^{\ell(\alpha)}
\mecs{E}{ f^{-1}(j) },
\end{equation} 
where $f^{-1}(j)\subseteq P$ is a sub-poset.
\end{theorem}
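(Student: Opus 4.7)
I would first expand $\equalK{P,E}(\xvec)$ in the monomial quasisymmetric basis. Classifying each $E$-compatible order-preserving map $f\colon P\to\setP$ by the composition $\beta=(|f^{-1}(i_1)|,\ldots,|f^{-1}(i_\ell)|)$ obtained from the multiplicities of the values $i_1<\cdots<i_\ell$ attained by $f$ gives
\[
\equalK{P,E}(\xvec) = \sum_{\beta\vDash n} |\opsurj_{\beta}(P,E)|\, \qmonom_\beta(\xvec).
\]
Substituting \eqref{eq:PsiintoMonomial} into the right-hand side of \eqref{eq:ASFormula} and equating coefficients of $\qmonom_\beta$ reduces the theorem to the combinatorial identity
\[
|\opsurj_{\beta}(P,E)| = \sum_{\alpha\leq\beta}\frac{1}{\pi(\alpha,\beta)}\sum_{f\in\opsurj_{\alpha}(P,E)}\prod_{j=1}^{\ell(\alpha)}\mecs{E}{f^{-1}(j)}.
\]

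Next, I would group pairs $(\alpha, f)$ according to the unique $g \in \opsurj_\beta(P,E)$ through which $f$ factors after coarsening the consecutive value-blocks of $\alpha$ back to the parts of $\beta$. Setting $Q_i := g^{-1}(i)$ and $E_i := E \cap (Q_i \times Q_i)$ (which one verifies is again a chain-congruence on $Q_i$), the pairs $(\alpha, f)$ biject with tuples $(g, (\alpha^{(i)}, f^{(i)})_i)$ where $\alpha^{(i)} \vDash \beta_i$ and $f^{(i)} \in \opsurj_{\alpha^{(i)}}(Q_i, E_i)$. Both $\pi(\alpha, \beta) = \prod_i \pi(\alpha^{(i)}, (\beta_i))$ and $\prod_j \mecs{E}{f^{-1}(j)}$ factor multiplicatively over $i$, so the identity reduces to the \emph{single-fiber identity}: for every finite poset $Q$ with chain-congruence $E'$ and $|Q| = m$,
\[
\sum_{\alpha \vDash m}\frac{1}{\pi(\alpha,(m))}\sum_{f\in\opsurj_{\alpha}(Q,E')}\prod_{j=1}^{\ell(\alpha)}\mecs{E'}{f^{-1}(j)} = 1.
\]

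I would prove the single-fiber identity by induction on $|Q/\erel{E'}|$. The crucial arithmetic input is that $\pi$ factors cleanly when one peels off the \emph{last} part of the composition:
\[
\pi\bigl((\alpha_1, \ldots, \alpha_{\ell-1}, \alpha_\ell), (m)\bigr) = m \cdot \pi\bigl((\alpha_1, \ldots, \alpha_{\ell-1}), (m - \alpha_\ell)\bigr),
\]
whereas peeling from the left fails because the subsequent partial sums remain contaminated by $\alpha_1$. Stratifying the sum by the top block $F := f^{-1}(\ell(\alpha))$---which must be a union of $\erel{E'}$-classes whose image $\bar F := F/\erel{E'}$ is an order filter of $Q/\erel{E'}$ with a unique minimum---the restricted map $f|_{Q \setminus F}$ ranges freely over order-preserving surjections on the smaller poset. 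By the inductive hypothesis this inner sum equals $1$, leaving $\tfrac{1}{m}\sum_F \mecs{E'}{F}$. The identity then reduces to showing $\sum_F \mecs{E'}{F} = m$, which follows from the one-line observation that order filters of $Q/\erel{E'}$ with a prescribed unique minimum $\bar x$ are precisely the principal filters ${\uparrow}\bar x$. Hence $\sum_F \mecs{E'}{F} = \sum_{\bar x \in Q/\erel{E'}} |\bar x| = m$, completing the induction.

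The main obstacle is spotting the correct recursion: peeling the \emph{last} block of $\alpha$ rather than the first is what gives $\pi$ a clean multiplicative split and makes the induction go through. With that in hand, the remaining reductions and the principal-filter bijection fall out immediately.
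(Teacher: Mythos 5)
This theorem is not proved in the paper at all: it is imported verbatim from Alexandersson--Sulzgruber \cite[Thm.~6.2]{AlexanderssonSulzgruber2019}, so there is no in-paper argument to compare against. Judged on its own, your proof is correct and self-contained. The reduction to coefficients of $\qmonom_\beta$ is right (the coefficient of $\qmonom_\beta$ in $\equalK{P,E}$ is $|\opsurj_\beta(P,E)|$, and substituting \eqref{eq:PsiintoMonomial} turns the theorem into the stated identity); the factorization over the fibers $Q_i=g^{-1}(i)$ is legitimate because $\pi(\alpha,\beta)=\prod_i\pi(\alpha^{(i)},(\beta_i))$ by definition and because each $f^{-1}(j)$ sits inside a single $Q_i$; and the single-fiber identity is correctly closed by peeling the top block, using $\pi(\alpha,(m))=m\cdot\pi((\alpha_1,\dots,\alpha_{\ell-1}),(m-\alpha_\ell))$ together with the observation that the class-union order filters of $Q$ with a unique minimal class are exactly the principal filters of $Q/\erel{E'}$, whence $\sum_F\mecs{E'}{F}=\sum_{\bar x}|\bar x|=m$. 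Two points you wave at with ``one verifies'' deserve a sentence each: (i) $E_i=E\cap(Q_i\times Q_i)$ is again a chain congruence because any $g\in\equalOrderPresF(Q_i,E_i)$-forced equality is in particular forced for restrictions of maps in $\equalOrderPresF(P,E)$, giving $\erel{E_i}\subseteq\erel{E}\cap(Q_i\times Q_i)=E_i$ (the same argument handles the order ideal $Q\setminus F$ in the induction, which is needed for the inductive hypothesis to apply); and (ii) $\mecs{E}{R}=\mecs{E_i}{R}$ for $R\subseteq Q_i$ since $Q_i$ is a union of $E$-classes, so the quotient subposets agree. Also confirm the trivial base case ($Q\setminus F=\varnothing$, where the inner sum is $1$ by convention). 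With those lines added, this is a complete proof, and the right-peeling of $\pi$ is indeed the key structural point --- left-peeling does not split $\pi$ multiplicatively.
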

Note that all coefficients appearing in \eqref{eq:ASFormula} are non-negative.

\begin{example}
Let $P$ and $E$ be given as \cref{ex:quotientPoset3}.
Then the coefficient of $\frac{\Psi_{532}(\xvec)}{z_{532}}$ in the $\Psi$-expansion of
$\equalK{P,E}(\xvec)$ is given as
\[
 \equalK{P,E}(\xvec) =  \dotsb \; + \; \frac{\Psi_{532}(\xvec)}{z_{532}}\left( 
 3\cdot 0 \cdot 2 + 3\cdot 2 \cdot 2
 \right) \; + \; \dotsb 
\]
according to the calculations in \eqref{eq:mECalc}.
\end{example}

\section{A Murnaghan--Nakayama rule for cylindric Schur functions}

Recall that skew Schur functions $\schurS_{\lambda/\mu}(\xvec)$ is a 
$(P,w)$-partition generating function $K_{P,w}(\xvec)$ for a special type of labeled poset.
Liu and Weselcouch~\cite{LiuWeselcouch2020} found a formula for the
$\Psi$-expansion of $K_{P,w}(\xvec)$ in general.
The Liu--Weselcouch formula is a uniform generalization of 
the classical Murnaghan--Nakayama formula for skew Schur functions,
and the $E=\emptyset$ case of \cref{thm:ASFormula}.

However, we noted in \cref{rem:notPwPartition} that a \emph{cylindrical} skew Schur function
$\schurS_D(\xvec)$ does rarely coincide with some $K_{P,w}(\xvec)$
so we must use a different strategy for cylindric diagrams $D$.

\begin{proposition}\label{prop:reduction}
Let $P$ be a poset and $E \subseteq \rel(P)$. 
Then 
\begin{align}\label{eq:reduction}
\strictK{P,E}(\xvec) &= 
\sum_{\alpha\vDash n}
\frac{\Psi_{\alpha}(\xvec)}{z_{\alpha}}
\sum_{f \in \opsurj_{\alpha}(P,E)}
\prod_{j=1}^{\ell(\alpha)}
\left(
\sum_{S_j \subseteq E\cap \rel( f^{-1}(j) )} (-1)^{|S_j|} \mecs{\erel{S_j}}{f^{-1}(j)}
\right)
\end{align}
where in the last sum, $S_j$ ranges over all subsets of $E$,
which only involves elements in the sub-poset $f^{-1}(j)$.
\end{proposition}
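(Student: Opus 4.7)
The plan is to combine the inclusion-exclusion identity of \cref{eq:inclExcl} with the power-sum expansion of \cref{thm:ASFormula}, and then rearrange the resulting double sum so that the inner sum factors over the fibers of each order-preserving surjection.

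First I write $\strictK{P,E}(\xvec) = \sum_{S \subseteq E} (-1)^{|S|} \equalK{P,S}(\xvec)$ by \cref{eq:inclExcl}. Since $\equalK{P,S}=\equalK{P,\erel{S}}$ and $\erel{S}$ is by construction a chain-congruence, \cref{thm:ASFormula} applies to each term:
\[
 \equalK{P,S}(\xvec) = \sum_{\alpha\vDash n} \frac{\Psi_\alpha(\xvec)}{z_\alpha} \sum_{f \in \opsurj_\alpha(P,\erel{S})} \prod_j \mecs{\erel{S}}{f^{-1}(j)}.
\]
One checks that $\opsurj_\alpha(P,\erel{S}) = \opsurj_\alpha(P,S)$: any order-preserving map constant on $S$ belongs to $\equalOrderPresF(P,S)$, hence is forced to be constant on the entire closure $\erel{S}$. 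After substituting and exchanging the order of summation over $S$ and $f$, I rewrite the sum as running over $f \in \opsurj_\alpha(P)$ paired with subsets $S \subseteq E_f \coloneqq \{(\poseta,\posetb)\in E : f(\poseta)=f(\posetb)\}$.

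The next ingredient is that $E_f$ splits as the disjoint union $\bigsqcup_j E \cap \rel(f^{-1}(j))$, since an $E$-pair on which $f$ is constant has both endpoints in the same fiber. Writing $S = \bigsqcup_j S_j$ along this decomposition makes $(-1)^{|S|}$ factor as $\prod_j(-1)^{|S_j|}$. Combined with the factorization
\[
 \prod_j \mecs{\erel{S}}{f^{-1}(j)} = \prod_j \mecs{\erel{S_j}}{f^{-1}(j)}
\]
(to be justified below), the sum over $S \subseteq E_f$ becomes a product over $j$ of independent inner sums over $S_j \subseteq E \cap \rel(f^{-1}(j))$, which is exactly the right-hand side of \eqref{eq:reduction}.

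The main obstacle is the locality identity $\erel{S}\cap \rel(f^{-1}(j)) = \erel{S_j}$ used above. I would argue it in two steps. First, if $(\poseta,\posetb)\in \erel{S}$ then $g(\poseta)=g(\posetb)$ for every $g\in\equalOrderPresF(P,S)$, and in particular for $g=f$, so $\erel{S}$ cannot relate elements across distinct fibers of $f$. Second, since $f$ is order-preserving, any element $c$ with $\poseta \leq_P c \leq_P \posetb$ and $\poseta,\posetb\in f^{-1}(j)$ automatically satisfies $f(c)=j$, so any forcing chain that generates a relation of $\erel{S}$ inside $f^{-1}(j)$ stays inside $f^{-1}(j)$. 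Thus the closure can be computed locally in each fiber, yielding the desired equality and finishing the proof.
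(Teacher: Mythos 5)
Your proposal follows the same route as the paper's proof: inclusion--exclusion via \eqref{eq:inclExcl}, \cref{thm:ASFormula} applied to each $\equalK{P,S}(\xvec)$, an exchange of the order of summation over $S$ and $f$, and a factorization of the inner sum over the fibers of $f$. You are more careful than the paper on two points, and one of them surfaces a genuine discrepancy with the statement. After exchanging the sums you correctly arrive at an outer sum over \emph{all} $f\in\opsurj_\alpha(P)$ paired with $S\subseteq E_f$; this is \emph{not} literally the right-hand side of \eqref{eq:reduction}, which restricts to $f\in\opsurj_{\alpha}(P,E)$, i.e.\ to $f$ constant on every edge of $E$. The two do not agree: for the two-element chain $\poseta<_P\posetb$ with $E=\{(\poseta,\posetb)\}$ and $\alpha=(1,1)$, the set $\opsurj_{(1,1)}(P,E)$ is empty, yet $\strictK{P,E}(\xvec)=\qmonom_{11}(\xvec)$ has coefficient $+1$ on $\Psi_{11}(\xvec)/z_{11}$, which your version produces and the stated right-hand side does not. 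Your version is also the one consistent with the later definition of $\RT(D,\mu)$ as a subset of $\opsurj_{\mu}(P_D)$ (no constancy on $E$ imposed). So your derivation is correct, but your closing claim that it ``is exactly the right-hand side of \eqref{eq:reduction}'' is false as written; you should instead flag that the ``$,E$'' in the statement must be dropped.

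The second point is the locality identity $\erel{S}\cap\rel(f^{-1}(j))=\erel{S_j}$, which the paper uses silently and you rightly isolate as the main obstacle. Your first half (no cross-fiber identifications, because $f$ itself belongs to $\equalOrderPresF(P,S)$) is fine. The second half is argued via ``forcing chains,'' which is not how $\erel{S}$ is defined; a cleaner argument is to take any $h\in\equalOrderPresF(f^{-1}(j),S_j)$ with $h(\poseta)\neq h(\posetb)$ and extend it to $g\in\equalOrderPresF(P,S)$ by sending $f^{-1}(i)$ to values below (resp.\ above) the range of the shifted $h$ for $i<j$ (resp.\ $i>j$); order-preservation of $f$ makes this consistent, and it shows $(\poseta,\posetb)\notin\erel{S}$ whenever $(\poseta,\posetb)\notin\erel{S_j}$. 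With that repair and the corrected indexing set, the proof is complete and is, modulo these two clarifications, the paper's own argument.
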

\begin{proof}
We combine the formula for $\equalK{P,E}(\xvec)$ in
\eqref{eq:ASFormula} with the inclusion-exclusion formula \eqref{eq:inclExcl}.
This leads to
\[
\strictK{P,E}(\xvec) = 
\sum_{S \subseteq E}
(-1)^{|S|}
\sum_{\alpha\vDash n}
\frac{\Psi_{\alpha}(\xvec)}{z_{\alpha}}
\sum_{f\in\opsurj_{\alpha}(P,S)}
\prod_{j=1}^{\ell(\alpha)}
\mecs{ \erel{S} }{ f^{-1}(j) }.
\]
We can then change the order of summation, 
and choose the order-preserving surjection $f$ before choosing the subset $S$. 
This leads to
\[
\strictK{P,E}(\xvec) = 
\sum_{\alpha\vDash n}
\frac{\Psi_{\alpha}(\xvec)}{z_{\alpha}}
\sum_{f\in\opsurj_{\alpha}(P,E)}
\sum_{S \subseteq E}
\prod_{j=1}^{\ell(\alpha)}
(-1)^{|S|}
\mecs{ \erel{S} }{ f^{-1}(j) }.
\]
Now, the subsets $S = S_1 \cup S_2 \cup \dotsb \cup S_{\length(\alpha)}$ 
can be constructed as disjoint unions
where the $S_j$ are chosen independently on the sub-posets $f^{-1}(j)$.
(Recall that in the definition of $\opsurj_{\alpha}(P,S)$,
we must have $f(\poseta)=f(\posetb)$ whenever $(a,b) \in S$,
so $S$ indeed restricts to independent 
choices of $S_j \subseteq E\cap \rel( f^{-1}(j) )$.)
This now leads to the final expression in \eqref{eq:reduction}.
\end{proof}

The remaining part of this section is dedicated to simplifying the innermost sum of \eqref{eq:reduction}.
This is equivalent to computing the coefficient
of $\Psi_{(n)}/z_{(n)}$ in $\strictK{P,E}(\xvec)$.
This inner sum is simply
\begin{equation}\label{eq:mainFocusFirstReduction}
  \sum_{S \subseteq E} (-1)^{|S|} \mecs{\erel{S}}{P}.
\end{equation}
Let us compute the coefficient
of $\Psi_{(n)}/z_{(n)}$ for $\strictK{P,E}(\xvec)$ in
\cref{ex:quotientPoset3}.
Note first that $P / \erel{E}$ has a unique
minimum $M$ (namely $123$).
However, $(4,7) \in E$ is not part of $M$
so by adding or removing the edge $(4,7)$ in $S$
we obtain a sign-reversing involution. Hence,
for this particular poset and set of strict edges (edges representing strict inequality), \eqref{eq:mainFocusFirstReduction} is $0$.

\subsection{Stacked ribbons}

A cylindric diagram $D$ on $\mathfrak{C}_{x,y}$ corresponds to a poset $P$ with strict edges $E$ as in \eqref{eq:posetFromDiagram}.

By removing loop ribbons iteratively starting from the outer boundary,
we can write $D$ as the disjoint union
\[
 D =   L_1 \cup L_2 \cup \dotsb \cup L_\ell \cup F
\]
each $L_i$ is a loop ribbon and where $F$ is a either also
a loop ribbon or a non-empty cylindric diagram not containing a loop (an $x+y$-core of the shape).
We say that $D$ is a \defin{pure stacked ribbon} if $F$ is a loop ribbon
and a \defin{(non-pure) stacked ribbon} if $F$ is a (non-loop) ribbon.
Note that the only other cases are when $F$ is disconnected or 
contains a $2{\times}2$-box.
We have previously defined the height of a ribbon as the number of rows it occupies for non-loop ribbons and $y$ for loop ribbons. In the language of posets, the height of a ribbon is the number of edges in $E$.
We extend the definition to stacked ribbons by setting the \defin{height} of a stacked ribbon
$D =   L_1 \cup L_2 \cup \dotsb \cup L_\ell \cup F$
to be the sum of the heights of its constituents (We can equivalently define the height as the number of vertical edges
of an extended ribbon wrapping around the shape multiple times and cylindrically covers the entire shape.
Such a ribbon only exists if $D$ is a stacked ribbon
and it is unique for \emph{non-pure} stacked ribbons.) As all loop ribbons have height $y$, we have $\height(D)=\ell y+\height(F)$.

The \defin{width} of a pure stacked ribbon is defined as the 
number of (cylindrical) columns it occupies.

\begin{example}
Below are five examples of stacked ribbons
and the correspond heights for these.
\begin{align*}\label{eq:stackedRibbons}
\ytableausetup{boxsize=0.8em}
&
\begin{ytableau}
*(pSand) \cdot \\
*(pSand)  & *(pSand) \\
\none & *(pSand) & *(pSand) \\
\none & \none & \none & \none[\cdot]
\end{ytableau}
&&
\begin{ytableau}
*(pSkin) \cdot \\
*(pSkin)  & *(pSkin) & *(pSkin) \\
\none & \none & *(pSkin) & *(pSkin) & \none[\cdot]
\end{ytableau}
&&
\begin{ytableau}
*(pBlue) \cdot & *(pBlue) & *(pBlue) & *(pBlue) \\
*(pSkin) \cdot  & *(pSkin) & *(pSkin) & *(pBlue) & \none[\cdot] \\
\none & \none & *(pSkin) & *(pSkin) & \none[\cdot]
\end{ytableau}
&&
\begin{ytableau}
*(pSkin) \cdot \\
*(pSkin) \\
*(pSkin) &*(pSkin) &*(pSkin) &*(pSand) &*(pSand) \\ 
\none & \none &  *(pSkin) &  *(pSkin) & *(pSand) \\
\none & \none & \none &  *(pSkin) &  *(pSand) & *(pSand) \\
\none & \none & \none &  *(pSkin) &  *(pSkin) & *(pSand) \\
\none & \none & \none & \none &  *(pSkin) & *(pSkin) & *(pSkin) & \none[\cdot] 
\end{ytableau}
&&
\begin{ytableau}
*(pBlue) \cdot & *(pBlue) & *(pSand)\\
*(pSkin) \cdot & *(pBlue) & *(pSand) \\
*(pSkin) \; & *(pBlue) & *(pBlue) & *(pBlue) & \none[\cdot] \\
*(pSkin) \; & *(pSkin) & *(pSkin) & *(pSkin) & \none[\cdot] 
\end{ytableau}
\\
&2 && 2 && 1+1 && 3+5 && 1+2+2
\end{align*}
The second and third stacked ribbon are pure
and both of these have width 4.
\end{example}

\begin{proposition}\label{prop:stackedRibbon}
Let $D$ be a cylindrical skew shape where $E$ is the set of strict edges. 
Then
\begin{equation}\label{eq:mnrule}
 \sum_{S\subseteq E}(-1)^{|S|}m_S(P) = 
 \begin{cases}
  (-1)^{\height(D)}  &\text{ if $D$ is a non-pure stacked ribbon,} \\
  \width(D) \cdot (-1)^{\height(D)}  &\text{ if $D$ is a pure stacked ribbon,} \\
  0  &\text{ otherwise.} \\
 \end{cases}
\end{equation}
Note that the first case implies the classical Murnaghan--Nakayama rule.
\end{proposition}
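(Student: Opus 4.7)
The plan is to prove the identity by splitting into cases according to the structure of $D$. Recall that $m_S(P)$ is the size of the unique minimum class of $P/\erel{S}$ when such a class exists and zero otherwise, so the key structural question is when the identifications encoded by $S$ force all minima of $P$ into a single class in the quotient poset.

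For the cases where the sum should vanish, I would construct sign-reversing involutions on $\{S\subseteq E : m_S(P)>0\}$. If $D$ is disconnected then the minima in different connected components can never be identified by any chain congruence, so $m_S(P)=0$ trivially. If $D$ contains a $2\times 2$ block $\{a,b,c,d\}$ with $a\lessdot b$ and $c\lessdot d$ weak and $e_1=(a,c)$, $e_2=(b,d)$ strict, then the weak horizontal edges together with either strict edge already encode the equality forced by the other, making one of them redundant in $\erel{S}$; toggling this redundant edge gives a sign-reversing involution preserving $m_S(P)$. For the remaining connected, no-$2\times 2$, not-stacked case the obstruction to a stacked decomposition must come from some branching configuration, and I would locate a strict edge there whose toggling does not affect the unique minimum class of $P/\erel{S}$, in the spirit of the argument sketched right before this proposition for \cref{ex:quotientPoset3}.

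For stacked ribbons I would compute directly. For the non-pure case $D=L_1\cup\cdots\cup L_\ell\cup F$, I would induct on $\ell$: the base $\ell=0$ is a zigzag non-loop ribbon and reduces to the classical Murnaghan--Nakayama calculation (as can be verified explicitly on a small example such as a $5$-ribbon with two minima, where the contributions from the singleton subsets and from $S=E$ combine to yield $(-1)^{\height(F)}$), and each additional stacked loop contributes a further factor of $(-1)^y$ through its $y$ new strict edges. For the pure case, the cyclic wrap-around of each loop produces an additional telescoping: for a single loop ribbon one checks by direct computation that $m_E(P)=n$ via full cyclic collapse, while partial subsets $S$ give smaller minimum classes whose sizes in the alternating sum telescope (using elementary identities such as $\sum_k\binom{y}{k}(-1)^k k=0$) to leave exactly $(-1)^y\width(D)$; for multi-loop stacked ribbons a similar analysis applied loop-by-loop should yield $\width(D)\cdot(-1)^{\height(D)}$.

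The main obstacle I anticipate is the involution for the connected, non-stacked case, which requires careful structural analysis of how cylindric shapes can fail to admit the stacked decomposition; this is subtle because the cylindrical geometry permits configurations (such as those highlighted in \eqref{eq:core1}--\eqref{eq:3fills}) with no non-cylindric analogue. A secondary difficulty is the detailed bookkeeping for pure stacked ribbons with $\ell\geq 2$ loops, where strict edges between consecutive loops interact with the cyclic closure within each individual loop, requiring a careful analysis of how $\erel{S}$ propagates identifications through both vertical strict edges and horizontal weak edges across the cylindric wrap-around.
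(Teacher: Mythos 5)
Your overall strategy (sign-reversing involutions obtained by toggling strict edges, plus a direct computation for stacked ribbons) is the same as the paper's, but your case analysis is organized around the wrong object, and this produces a genuine error. The trichotomy in the proposition is governed by the core $F$ in the decomposition $D = L_1 \cup \dotsb \cup L_\ell \cup F$ obtained by peeling off loop ribbons, \emph{not} by $D$ itself: a stacked ribbon with $\ell \geq 1$ loops almost always contains $2{\times}2$ blocks (any two adjacent layers create them), yet its sum is $\pm 1$ or $\pm\width(D)$, not $0$. So your case ``$D$ contains a $2{\times}2$ block $\Rightarrow$ the sum vanishes'' is false as stated, and any involution purporting to prove it must break down. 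Relatedly, your justification for that involution is incorrect: in a block with weak edges $a \lessdot b$, $c \lessdot d$ and strict edges $(a,c)$, $(b,d)$, imposing $f(a)=f(c)$ does \emph{not} force $f(b)=f(d)$ (take $f(a)=f(b)=f(c)=1$, $f(d)=2$), nor conversely, so neither strict edge is redundant in $\erel{S}$. The reason the toggle actually works --- and only once the block is known to sit inside $F$ and the edges on the loop layers have already been pinned down --- is that the two ``upper-right'' boxes of the block can never lie in the minimal equivalence class, so toggling the strict edge between them changes neither the uniqueness nor the size of the minimum. Your remaining vanishing case (``connected, no $2{\times}2$, not stacked'') is then vacuous once the decomposition through $F$ is set up correctly, so the ``branching configuration'' you anticipate as the main obstacle is not where the difficulty lies.

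For the positive cases your outline is in the right spirit but skips the actual technical content. The bulk of the paper's proof is a chain of involutions forcing the structure of the surviving sets $S$ on the loop layers: all strict edges inside each $L_i$ must lie in $S$, exactly one strict edge between consecutive layers $L_{i-1}$ and $L_i$ may lie in $S$, and a genuinely cylindric wrap-around cancellation must be handled before one can ``induct on $\ell$.'' This same issue already affects your non-pure case with $\ell \geq 1$, not just the pure one. Finally, for a single loop ribbon the alternating sum does not telescope over all subsets with binomial coefficients: after the involutions only $S=E$ and the sets $S = E \setminus \{e\}$ survive, giving $(-1)^{|E|}|F| + (-1)^{|E|-1}|E| = (-1)^{\height(F)}\bigl(|F|-|E|\bigr) = (-1)^{\height(F)}\width(F)$, and identifying $|F|-|E|$ with the width is the point of the computation.
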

Proof of this proposition involves a dedicated use of inclusion-exclusion through a 
number of different cases and restrictions. It is postponed to the appendix.
\medskip 

Let $D$ be cylindric diagram with associated poset $P$  and let $\mu$ be a partition.
The set of \defin{ribbon tableaux} of shape $D$ and content $\mu$
is the set of order-preserving surjections $f \in \opsurj_{\mu}(P_D)$
such that each poset $f_j \coloneqq f^{-1}(j)$ is a stacked ribbon.
Let us denote this set by $\RT(D,\mu)$.
By \cref{prop:stackedRibbon}, it suffices to sum over only such order-preserving surjections
when computing cylindric Schur functions using the formula in \cref{prop:reduction}.
In other words, we have simplified the innermost sum in \eqref{eq:reduction}.
This simplification leads to the following theorem.
\begin{theorem}[The cylindric Murnaghan--Nakayama rule]\label{thm:cylindricMNRule}
Let $D$ be a cylindic skew Young diagram. Then 
\begin{equation}
\schurS_D(\xvec) = \sum_{\mu} \frac{\powersum_\mu(\xvec)}{z_\mu} \sum_{ f \in \RT(D,\mu) }
\prod_{j=1}^{\length(\mu)} (-1)^{\height(f_j)} \width(f_j).
\end{equation}
By convention, $\width(f_j)\coloneqq 1$ if $f_j$ is a non-pure stacked ribbon.
\end{theorem}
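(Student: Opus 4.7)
The plan is to assemble Propositions~\ref{prop:reduction} and~\ref{prop:stackedRibbon}, then pass from the quasisymmetric power-sum basis to the symmetric power-sum basis by invoking symmetry of $\schurS_D$.

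First, using \eqref{eq:posetFromDiagram} I would rewrite $\schurS_D(\xvec) = \strictK{P,E}(\xvec)$, where $P=P(D)$ and $E$ is the set of strict edges, and then expand $\strictK{P,E}(\xvec)$ via Proposition~\ref{prop:reduction}. The key observation is that the innermost product over $j$ in \eqref{eq:reduction} factors into independent contributions, one for each sub-poset $f^{-1}(j)$, and the $j$-th factor is precisely the expression \eqref{eq:mainFocusFirstReduction} applied to the sub-cylindric diagram carved out by $f^{-1}(j)$ with its inherited strict edges. Proposition~\ref{prop:stackedRibbon} then evaluates each such factor: it vanishes unless $f^{-1}(j)$ is a stacked ribbon, and otherwise contributes $(-1)^{\height(f_j)} \width(f_j)$ (with $\width(f_j)=1$ for non-pure ribbons by convention). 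Restricting the sum accordingly gives
\[
\schurS_D(\xvec) = \sum_{\alpha \vDash n} \frac{\Psi_\alpha(\xvec)}{z_\alpha} \sum_{f \in \RT(D,\alpha)} \prod_{j=1}^{\length(\alpha)} (-1)^{\height(f_j)} \width(f_j).
\]

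To pass from this composition-indexed sum to the partition-indexed sum in the theorem, I would appeal to the fact that $\schurS_D$ is symmetric \cite[Cor.~5.3]{Postnikov2005}. Since $\{\powersum_\mu\}$ is a basis of the symmetric functions and $\powersum_\mu = \sum_{\alpha\sim\mu}\Psi_\alpha$ by \eqref{eq:powersum_Psi_expansion}, and since $z_\alpha = z_{\mu}$ whenever $\alpha\sim\mu$, the coefficient $c_\alpha := \sum_{f\in\RT(D,\alpha)}\prod_j (-1)^{\height(f_j)}\width(f_j)$ is forced to depend only on the sorted partition $\mu(\alpha)$. Regrouping the $\alpha$-sum by partition and applying \eqref{eq:powersum_Psi_expansion} then yields the claimed identity.

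The main obstacle is already absorbed into Proposition~\ref{prop:stackedRibbon}, whose proof (given in the appendix) carries out the combinatorial case analysis of the inclusion-exclusion in \eqref{eq:mainFocusFirstReduction}. Once that input is in hand, the remaining work is bookkeeping together with a standard $\Psi \to \powersum$ basis change. One might hope for a direct bijective argument showing $c_\alpha$ depends only on $\mu(\alpha)$, but since such a bijection on ribbon labelings is not evident, invoking symmetry of $\schurS_D$ appears to be the cleanest route.
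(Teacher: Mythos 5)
Your proposal is correct and follows essentially the same route as the paper: rewrite $\schurS_D=\strictK{P,E}$, apply \cref{prop:reduction}, evaluate each inner factor with \cref{prop:stackedRibbon} to restrict to $\RT(D,\alpha)$, and regroup. The only difference is that you spell out the $\Psi_\alpha\to\powersum_\mu$ regrouping (using symmetry of $\schurS_D$ to see that the coefficient depends only on the sorted composition), a step the paper leaves implicit; this is a welcome clarification rather than a deviation.
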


Now note that we can illustrate a ribbon tableau $f$ by 
tiling the diagram $D$ by stacked ribbons of the appropriate size
where the ribbon $f_j$ is labeled $j$.
This is how we usually illustrated ribbon tableaux in
classical Murnaghan--Nakayama rule. 

\begin{example}\label{ex:mnrule}
Let $D$ be the cylindric diagram below:
\[
\ytableausetup{boxsize=1.0em}
\ytableaushort{
{\none[\cdot]}{\,}{\,},
{\none}{\,}{\,},
{\none}{\none}{\cdot}
}
\]
We have the following ribbon tableaux for respective partition $\mu$.
\begin{align*}
\mu=(5):& \quad
\ytableaushort{
{\none}{1}{1},
{\none}{1}{1},
{\none}{\none}{1}
} \quad \text{(Non-pure ribbon with height 3)}
\\
\mu=(4,1):& \quad
\ytableaushort{
{\none}{1}{2},
{\none}{1}{1},
{\none}{\none}{1}
} \quad \text{(Pure with width 2 and a single box ribbon)}
\\
\mu=(3,1,1):& \quad
\ytableaushort{
{\none}{1}{3},
{\none}{1}{2},
{\none}{\none}{1}
} \quad 
\ytableaushort{
{\none }{2}{3},
{\none}{1}{1},
{\none}{\none}{1}
} \quad 
\\
\mu=(2,2,1):& \quad\emptyset \\
\mu=(2,1,1,1):&\quad \emptyset \\
\mu=(1,1,1,1,1):& \quad
\ytableaushort{
{\none }{3}{5},
{\none}{2}{4},
{\none}{\none}{1}
} \quad 
\ytableaushort{
{\none }{4}{5},
{\none}{2}{3},
{\none}{\none}{1}
} \quad 
\ytableaushort{
{\none }{3}{5},
{\none}{1}{4},
{\none}{\none}{2}
} \quad 
\ytableaushort{
{\none }{4}{5},
{\none}{1}{3},
{\none}{\none}{2}
}
\end{align*}
This gives the power-sum expansion
\[
 \schurS_D(\xvec) = -\frac{\powersum_5(\xvec)}{z_5} + 2 \frac{\powersum_{41}(\xvec)}{z_{41}}
 - 2 \frac{\powersum_{311}(\xvec)}{z_{311}}
 + 4 \frac{\powersum_{11111}(\xvec)}{z_{11111}}.
\]
\end{example}

\section{Cylindric ribbon invariants}\label{sect:invar}

In the non-cylindric case, the Murnaghan--Nakayama rule is cancellation-free
when computing $\chi^{\lambda}(\mu)$ if all parts of $\mu$ are the same.
This was proved in \cite[Cor. 9]{White1983} and \cite[Thm. 2.7.27]{JamesKerber1984}.
One way to prove this theorem is to show that all
ribbon tilings appearing in the sum for computing
$\chi^{\lambda}(\mu)$ are connected via certain local moves (flips) which preserve
the sign; see \cite{AlexanderssonPfannererRubeyUhlin2020} for a proof using this strategy.
One can ask if this still holds in the cylindrical setting,
but this turns out to be false, as we shall see in the following example.

\begin{example}
Consider the following cylindric diagram and $\mu=(2,2,2)$.
There are a total of two ribbon tilings of this diagram
using ribbons of size 2 as shown in \eqref{eq:ribbonDifferentSigns}.
The $2$-ribbon filling on the left has total height one,
whereas the filling on the right has total height two.
\begin{equation}\label{eq:ribbonDifferentSigns}
\begin{ytableau}
*(pBlue) \cdot \\
*(pSkin) \\
*(pSkin) \\
*(pOrange)& *(pOrange)&  *(pBlue) &  \none[\cdot]
\end{ytableau}
\qquad
\begin{ytableau}
*(pOrange) \cdot \\
*(pOrange) \\
*(pSkin) \\
*(pSkin)&  *(pBlue) &  *(pBlue) & \none[\cdot]
\end{ytableau}
\end{equation}

The issue is not isolated to $2$-ribbons
and the example can be adapted for any $k$ by considering
fillings of the shape of size $(2k,1,k-1,1)$.
The cases of $k=3$ and $k=4$ are shown below in \eqref{fig:kce}.
\begin{equation}\label{fig:kce}
\begin{array}{l}
\begin{ytableau}
\none[\cdot] \\
 *(pOrange)& *(pOrange) \\
\none & *(pOrange) \\
\none &  *(pSkin)& *(pSkin)&  *(pSkin)&   *(pBlue) & *(pBlue) & *(pBlue) \cdot \\
\end{ytableau}
\\
\begin{ytableau}
\none[\cdot] \\
 *(pOrange) &*(pOrange)\\
\none & *(pSkin) \\
 \none & *(pSkin)&   *(pSkin)& *(pBlue) & *(pBlue) &  *(pBlue) &*(pOrange) \cdot \\
\end{ytableau}
\end{array}
\qquad
\qquad 
\begin{array}{l}
\begin{ytableau}
\none[\cdot] \\
  *(pOrange) &*(pOrange)& *(pOrange) \\
 \none &\none & *(pOrange) \\
 \none &\none &  *(pSkin)& *(pSkin)& *(pSkin)& *(pSkin)& *(pBlue) &  *(pBlue) & *(pBlue) & *(pBlue) \cdot \\
\end{ytableau}
\\
\begin{ytableau}
\none[\cdot] \\
 *(pOrange) & *(pOrange) &*(pOrange)\\
 \none &\none & *(pSkin) \\
 \none & \none & *(pSkin)&   *(pSkin)&*(pSkin)& *(pBlue) &*(pBlue) & *(pBlue) &  *(pBlue) &*(pOrange) \cdot \\
\end{ytableau}
\end{array}
\end{equation}
The examples of $3$-ribbon (left) and $4$-ribbon (right) fillings with total heights of different parity.
\end{example}

We shall now discuss under what circumstances we \emph{can} expect the 
cylindrical Murnaghan--Nakayama rule is cancellation-free.
As noted before, the notion of \emph{$k$-quotient} does not translate to the cylindrical setting,
so we need a different framework.
\bigskip

Consider a cylindric diagram $D$ on $\mathfrak{C}_x,y$. Consider the binary word $\boun(D)$ corresponding to the outer boundary of $D$.
$\boun(D)$ is a word of length $y+x$, containing $y$ $\bw{1}$'s and $x$ $0$'s. If $k|y+x$, we can consider divide it into $k$ parts as follows: 
The \defin{$k$-partitioning} of $\boun(D)$ is a list $(\boun_1(D),\boun_2(D),\ldots,\boun_k(D))$ 
of disjoint subwords where each $\boun_i(D)$ is given by the letters whose indices are equivalent to $i$ modulo $k$.
We also denote the number of $\bw{1}$'s in one iteration of $\boun_i(D)$ by $y_i$,  $y=\sum_i y_i$. 
For example if $\boun(D)=\bw{110 001 011 010}$, it has the $3$-partitioning $(\bw{1000},\bw{1011},\bw{0110})$. 
We get $y_1=1$, $y_2=3$ and $y_3=2$.

\begin{definition}
The pair $(D,k)$ is a \defin{good pair} if for each $i$, $y_i$ satisfies one of the following:
\begin{itemize}
\item $y_i$ is equivalent to $y$ modulo $2$,
\item $y_i=0$,
\item $y_i=(x+y)/k$ (equivalently, $\boun_i$ has no $0$s) and this is an odd number. 
\end{itemize}
\end{definition}

We call diagrams whose inner boundaries are staircase shape (corresponding to $\bw{101010\ldots}$) \defin{inner-strict}. 
\begin{proposition}\label{prop:leftstrict}
If an inner-strict cylindric diagram $D$ with $s$ rows has a valid $k$-ribbon 
tiling and $k\mid s$, then $(D,k)$ are a good pair.
\end{proposition}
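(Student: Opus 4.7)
The plan is to exploit the fact that a $k$-ribbon removal acts as a letter-swap on the boundary word that preserves residues modulo $k$, and then to read off the good-pair conditions from the inner staircase word directly.

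First, inner-strictness with $s$ rows forces the ambient cylinder to be $\mathfrak{C}_{s,s}$ (so that the alternating pattern $\bw{1010\cdots}$ closes up) and pins down $\boun_I(D)$ as the alternating word of length $2s$ starting with $\bw{1}$. In particular $x+y = 2s$, and the hypothesis $k \mid s$ gives both $k \mid (x+y)$ (so the $k$-partitioning is well-defined) and $k \le s < x+y$, so every $k$-ribbon in the assumed tiling is a non-loop ribbon. As recalled in \cref{sec:cylindricSkew} for the non-cylindric setting, removing a non-loop $k$-ribbon from the outer boundary swaps a $\bw{1}$ at position $t+k$ with a $\bw{0}$ at position $t$ of the boundary word; because $t \equiv t+k \pmod k$, this swap is internal to a single class of the $k$-partitioning, and each $y_i$ is therefore preserved. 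Iterating through the tiling, the $k$-partitioning of $\boun(D)$ must coincide with that of $\boun_I(D)$, up to the cyclic relabeling of classes induced by the shift of anchor point on the $x=y$ line from $(l,l)$ to $(0,0)$. Since the three good-pair conditions involve only the value $y_i$ and not its index, this relabeling is harmless.

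It then remains to compute the $y_i$'s for $\bw{1010\cdots 10}$: the letter at position $j$ is $\bw{1}$ exactly when $j$ is odd. If $k$ is odd, each class contains $2s/k$ positions of alternating parity, so $y_i = s/k$; since $k$ is odd and $k \mid s$, the integer $s/k$ has the same parity as $s = y$, giving condition (a). If $k$ is even, every class consists of positions of a fixed parity (the parity of its index), so $y_i$ is either $0$ (condition (b)) or $2s/k$. In the latter case, $k$ even together with $k \mid s$ forces $s$ to be even, hence $2s/k$ is also even, so $y_i \equiv y \pmod 2$ and condition (a) holds once more. In every case each $y_i$ fits into one of the three allowed types, so $(D,k)$ is good.

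The main obstacle is a bookkeeping one. The outer and inner boundaries have different anchor points on the $x=y$ line, so their residue-class partitions are a priori only comparable up to a cyclic rotation; the proof hinges on the observation that the good-pair conditions are symmetric in the class index, so such a rotation can be ignored. A careful writeup would also verify that each individual ribbon removal can be performed without perturbing the chosen starting point of the outer boundary in a way that could invalidate the swap description, but this is a routine combinatorial check.
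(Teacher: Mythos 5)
Your proof is correct and follows essentially the same route as the paper's: a $k$-ribbon removal swaps a $\bw{1}$ and a $\bw{0}$ whose indices agree modulo $k$, so each $y_i$ of the outer boundary equals that of the alternating inner word, and the parity check $s/k\equiv s\pmod 2$ for odd $k$ finishes the argument. The only (harmless) divergence is that the paper dismisses even $k$ immediately by noting no even-length ribbon can be added to an alternating boundary, whereas you verify the good-pair conditions directly in that case; both are valid.
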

\begin{proof}
Since one can not add any even-length ribbons to a diagram with boundary $\bw{010101\ldots}$, $k$ must be odd. 
As adding or removing ribbons does not change the number of $\bw{1}$s in any $\boun_i(D)$, 
each $\boun_{i}$ has $s/k$ $\bw{1}$s, just like the inner boundary.
\end{proof}

\begin{definition} 
Given an integer $k \geq 1$ and a binary word $\wvec$ of length $nk$,
we let the \defin{pairing polynomial} be the non-commutative polynomial
\[
   \pair_k(\wvec) \coloneqq \sum_{\substack{1\leq i < j<k}} p_{ij}x_{[i]} x_{[j]}-p_{ji}x_{[j]} x_{[i]}
\]
where $p_{ij}$ denotes the number 
of pairs $(a,a+t)$ such that $a\equiv i (mod k)$,  $a+t\equiv j (mod k)$, $w_a=w_{a+t}=\bw{1}$.
The value 
\[
   \invar_k(\wvec) \coloneqq  \lfloor  \sum_{\substack{1\leq i < j<k, }}( p_{ij} -p_{ji}) /2\rfloor.
\]  
is called the \defin{$k$-pairing constant} of $\wvec$. 
We denote the parity of $\wvec$ by $\epsilon_k(\wvec)\in\{0,1\}$.
\end{definition}

\begin{lemma}\label{lem:rotationind}
For a cylindric diagram $D$ with boundary word $\wvec$, 
$\epsilon_k(\wvec)$ is rotation-independent if and only if $(D,k)$ is a good pair.
\end{lemma}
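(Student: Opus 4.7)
The plan is to analyze how $\epsilon_k$ transforms under a single cyclic rotation, since the rotation group is generated by one step. Write $S(\wvec) \coloneqq \sum_{1 \leq i < j \leq k}(p_{ij} - p_{ji})$, so that $\invar_k(\wvec) = \lfloor S(\wvec)/2 \rfloor$. Unpacking the definition of $p_{ij}$, one sees that $S(\wvec) = \sum \mathrm{sgn}(\mathrm{cl}(a_2) - \mathrm{cl}(a_1))$, summed over ordered pairs $a_1 < a_2$ with $w_{a_1} = w_{a_2} = \bw{1}$, where $\mathrm{cl}(a) \in \{1,\ldots,k\}$ denotes the class of position $a$. Under the one-step rotation $\wvec \mapsto \wvec'$ given by $w'_b = w_{b+1}$ (indices cyclic mod $x+y$), every letter's class shifts by $-1 \bmod k$, so $\Delta \coloneqq S(\wvec') - S(\wvec)$ decomposes into local contributions from individual $\bw{1}$-pairs.

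The central computation is pair-by-pair. For a pair $(a_1, a_2)$ with $1 < a_1 < a_2$, the order is preserved and the class shift $i \mapsto i - 1 \pmod k$ leaves $\mathrm{sgn}(j - i)$ unchanged unless exactly one of $i, j$ equals $1$; in that case the sign flips and the pair contributes $\pm 2$ to $\Delta$. A pair $(1, a_2)$ instead has its order reversed, because position $1$ wraps to the last position (of class $k$); a short check in the cases $\mathrm{cl}(a_2) = 1$ and $\mathrm{cl}(a_2) > 1$ shows that the order reversal and the class shift conspire to leave the contribution unchanged. Counting the mixed pairs of the first type and using that $\Delta$ is even yields
\[
 \epsilon_k(\wvec') - \epsilon_k(\wvec) \;\equiv\; \Delta/2 \;\equiv\; (y_1 - w_1)(y - y_1) \pmod 2,
\]
where $y_1$ is the number of $\bw{1}$'s in class $1$.

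To pass from one step to all rotations, observe that the $r$-step rotation turns the original class $c \coloneqq (r \bmod k) + 1$ into the new class $1$ and makes $w_{r+1}$ the new first letter; as $r$ varies, every pair $(c, p)$ of a class and a position $p$ of that class arises exactly once. Hence rotation-independence of $\epsilon_k$ is equivalent to
\[
 (y_c - w_p)(y - y_c) \equiv 0 \pmod 2
 \quad \text{for every class } c \text{ and every position } p \text{ of class } c.
\]
A case split on the parity of $y - y_c$ then matches this with the definition of a good pair: if $y - y_c$ is even it reduces to $y_c \equiv y \pmod 2$ (clause one), while if $y - y_c$ is odd the residual requirement $w_p \equiv y_c \pmod 2$ for every $p \in c$ forces the letters of class $c$ to be constant, yielding either $y_c = 0$ (clause two) or $y_c = (x+y)/k$ with $(x+y)/k$ odd (clause three). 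I expect the careful sign bookkeeping for the pairs through position $1$—verifying that the order flip and the cyclic class shift cancel—to be the main technical step.
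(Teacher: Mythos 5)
Your proof is correct and follows essentially the same route as the paper: both reduce to a single-step rotation, show the parity of $\invar_k$ changes by $(y_1-w_1)(y-y_1) \bmod 2$, and then iterate over all rotations to recover exactly the three clauses of the good-pair condition. The only difference is cosmetic bookkeeping — you show pairs through position $1$ contribute nothing net, while the paper counts the class shift ($y_1(y-y_1)$ flips) and then subtracts the $(y-y_1)$ order reversals of the moved letter — and the resulting formula is identical.
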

\begin{proof} 
Let $\wvec=\bit \hat{\wvec}$ where $\bit \in \{\bw{0},\bw{1}\}$. 
Let $p_{ij}$ denote the number of pairs $(a,a+t)$ such that $a\equiv i (\text{mod} k)$,  $a+t\equiv j (\text{mod} k)$, $w_a=w_{a+t}=\bw{1}$ 
for $\wvec$ and $q_{ij}$ denote the same thing for the rotated word $\hat{\wvec}\bit$. 

As the indices shift under rotation, for $i,j>1$, we have $q_{i-1j-1}=p_{ij}$. 
The case $i$ or $j$ is equal to $1$ depends on whether $\bit$ is $\bw{1}$ or $\bw{0}$.

If $\bit=\bw{0}$ we have $p_{1j}=q_{kj-1}$ and $p_{i1}=q_{i-1k}$. 
We get $\pair_k(\hat{\wvec}\bit) = \pair_k(\wvec) - \sum_j p_{1j}- p_{j1}$. 
In particular every modulo $1$ and different modulo $i$ pair affects $\epsilon_k(\wvec)$ by $1$, 
and parity remains unchanged if and only if $y_1 (y-y_1)$ is even. 

If $\bit=1$, we also have one $\bw{1}$ that flips with all the $\bw{1}$s whose indices are 
not $1$ modulo $k$. We have a total of $(y-y_1)$ of those. 
So the parity remains unchanged if and only if $(y_1-1) (y-y_1)$ is even.

Parity remains unchanged under rotation if for each $i$ 
either $y-y_i$ is even, $y_i=\bw{0}$ (so that $\bit=\bw{1}$ case never happens) or $y_i=
\ell(\boun(D))$ is odd (so that $\bit=\bw{1}$ case works and $\bit=\bw{0}$ case never happens).
\end{proof}

\begin{theorem}\label{thm:parity}
If a cylindric diagram $D$ can be tiled with $k$-ribbons with an empty core, 
where $(D,k)$ are a good pair, then for every such ribbon tiling, 
the total height of the ribbons has the same parity.
In particular, the sum in Theorem~\ref{thm:cylindricMNRule} is cancellation-free.
\end{theorem}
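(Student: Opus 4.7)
The strategy is to extract from \cref{lem:rotationind} a $\setZ/2$-valued invariant of the cylindric diagram which equals $\sum_i \height(R_i) \pmod 2$ for every $k$-ribbon tiling $\{R_i\}$ of $D$. The good-pair hypothesis is precisely what makes this invariant well-defined on the cylinder, where the boundary word is only meaningful up to rotation.

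First I would interpret each ribbon removal as a local move on the outer boundary word $\boun(D)$. In the non-loop case, removing a $k$-ribbon of height $h$ corresponds to finding a position $a$ with $w_a=\bw{0}$ and $w_{a+k}=\bw{1}$ and swapping these two entries; the height $h$ equals the number of $\bw{1}$s among $w_{a+1},\dots,w_{a+k-1}$ (exactly as in the non-cylindric setting reviewed in \cref{sec:cylindricSkew}). Loop ribbons require separate bookkeeping: removing one leaves the boundary word of the same shape, shifted diagonally, and its height $y$ should match the parity change of $\epsilon_k$ under that shift, which is precisely what the good-pair condition controls.

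Next, compute the change $\Delta\invar_k$ under the basic swap above. Since positions $a$ and $a+k$ share the same residue modulo $k$, only pairings of the moved $\bw{1}$ with intermediate $\bw{1}$s of \emph{different} residue contribute. For each such intermediate $\bw{1}$, some pair $(i,j)$ of residues with $i\neq j$ reverses order, so $p_{ij}-p_{ji}$ shifts by $\pm 2$, hence $\invar_k$ shifts by $\pm 1$. Summed over the $h$ intermediate $\bw{1}$s, $\Delta\invar_k$ is an integer of parity $h \pmod 2$. Equivalently, each ribbon removal alters $\epsilon_k(\boun(D))$ by $\height(R)\pmod 2$.

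Iterating over all ribbons in a tiling with empty core,
\[
\epsilon_k(\boun(D)) - \epsilon_k(\text{core boundary}) \;\equiv\; \sum_{i} \height(R_i) \pmod 2.
\]
By \cref{lem:rotationind}, the good-pair hypothesis ensures both terms on the left are well-defined cyclic invariants of their respective boundaries, so the right side is a function of $D$ alone. Consequently all tilings share the same total-height parity, and every summand in \cref{thm:cylindricMNRule} carries the same sign, which is the cancellation-freeness claim. The hard part is honestly the loop-ribbon / seam-crossing analysis: these are the moves where the swap picture degenerates into a rotation of $\boun(D)$, and matching their height contribution to $\Delta\epsilon_k$ is exactly the obstruction that the good-pair condition is engineered to remove.
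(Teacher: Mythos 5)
Your proposal follows essentially the same route as the paper: interpret each non-loop $k$-ribbon removal as swapping a $\bw{0}$ at position $t$ with a $\bw{1}$ at position $t+k$, observe that the moved $\bw{1}$ reverses order with exactly the $\height(R)$ intermediate $\bw{1}$s (each shifting some $p_{ij}-p_{ji}$ by $\pm 2$, hence $\invar_k$ by $\pm1$), telescope down to the core boundary, and invoke \cref{lem:rotationind} to make $\epsilon_k$ well defined on the cylinder and to handle seam-crossing ribbons by rotating first. Two small points. First, your loop-ribbon bookkeeping does not close up as stated: for a good pair, rotation invariance forces $\Delta\epsilon_k=0$ under the diagonal shift, while a loop ribbon has height $y$, which may be odd, so $\epsilon_k$ cannot track the height of a loop removal. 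The actual resolution is that a loop ribbon has exactly $x+y$ boxes, so loops occur in a $k$-ribbon tiling only when $k=x+y$; in that degenerate case every ribbon of every tiling is a loop of height $y$ and the claim is immediate, which is precisely how the paper dispatches it before running the swap argument for $k<x+y$ (where no loops can appear). Second, the telescoping needs every intermediate boundary, not just $\boun(D)$ and the core boundary, to have rotation-invariant $\epsilon_k$; this holds because removing a ribbon preserves each count $y_i$, so all intermediate diagrams remain good pairs --- worth stating explicitly.
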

\begin{proof} 
If $k=\ell(\boun(D))=x+y+1$, then any $k$-ribbon is a loop and has height $x+1$, so the result trivially holds. 
Assume $k\leq x+y$. Pick a removable ribbon $R$, and let $D'=D/R$, where $D'$ has outer boundary word $\wvec'$. 
We will show that $\epsilon_k(\wvec)=\epsilon_k(\wvec')$ if and only if the height of $R$ is even. 
Removing $R$ corresponds to exchanging a $1$ at a position $t+k$ with a $0$ at position $t$ modulo $|\boun(D)|$. 
By Lemma~\cref{lem:rotationind} above, we can rotate the boundary word without 
changing $\epsilon_k(\wvec)$, so that we can assume $t<t+k\leq |\boun(D)|$. 
The height of $R$ is equal to the number of $1$'s whose indices lie in $(t,t+k)$ in the 
boundary word. The change in $\invar_k(\wvec)$ (up to rotation) is also measured by the 
number of the same number, as the $1$'s with indices in $(t,t+k)$ are flipped with the $1$ 
who moves from index $t+k$ to $t$. 
Each such pair of $1$s causes $p_{ij}$ to decrease by one and $p_{ji}$ to increase 
by one for some $i,j$, so the constant $\invar_k(\wvec)$ is altered by one (increased or decreased). 
As a consequence,  $\epsilon_k(\wvec)$ is altered if and only if the height of $R$ 
is odd, and it stays the same otherwise.

As removing a ribbon does not change the number of $1$'s in any modulo, $(D',k)$ 
is also a good pair. We can repeat the process removing 
ribbons until we reach the inner boundary $\wvec_{inner}$ of the shape. 
The constant $\epsilon_k(\wvec_{inner})$ is equal to  $\epsilon_k(\wvec)$ if and only 
if the total height of the removed ribbons is an even number. 
As this value only depends on the inner boundary and not the choice of the ribbons, 
the total height of ribbons always has the same parity.
\end{proof}

\begin{corollary}\label{cor:cancellationFree}
If an inner-strict cylindric diagram $D$ can be tiled with $k$-ribbons, 
then the parity of the total height of the ribbons is independent of the tiling.
\end{corollary}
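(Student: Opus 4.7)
The plan is to observe that this corollary is essentially a packaging of the two preceding results, Proposition~\ref{prop:leftstrict} and Theorem~\ref{thm:parity}. Given an inner-strict cylindric diagram $D$ that admits a $k$-ribbon tiling, I would first invoke Proposition~\ref{prop:leftstrict} to conclude that $(D,k)$ is a good pair, and then feed this into Theorem~\ref{thm:parity} to obtain the desired parity invariance.

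The only nontrivial bookkeeping involves verifying that the hypothesis of Proposition~\ref{prop:leftstrict}, namely $k \mid s$ for $s$ the number of rows, is automatic under the hypotheses of the corollary. My plan here is to observe that removing a $k$-ribbon from $D$ exchanges a $\bw{1}$ at some position $t+k$ with a $\bw{0}$ at position $t$ of the boundary word, so the count of $\bw{1}$'s in each residue class mod $k$ is preserved. Since $D$ is tileable down to its inner boundary, the residue counts $y_i$ for $\boun(D)$ must equal the residue counts for the staircase inner boundary $\bw{1010\cdots10}$ of length $2s$. For a staircase to have such residues balance, $k$ must divide $s$ (and $k$ must be odd, as already noted inside the proof of Proposition~\ref{prop:leftstrict}).

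Having verified the good-pair condition, I would then apply Theorem~\ref{thm:parity} directly: the statement that every $k$-ribbon tiling of $D$ has total height of the same parity is exactly the conclusion we want, so the corollary follows. The ``empty core'' hypothesis of Theorem~\ref{thm:parity} is satisfied because the inner-strict shape being tileable by $k$-ribbons means precisely that iterated ribbon removal reaches the inner boundary, leaving nothing.

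The main obstacle is really conceptual rather than technical: it lies in recognizing that the staircase inner boundary is exactly the structural condition needed to force the residue counts $y_i$ to all equal $s/k$, so that the ``good pair'' conditions of Definition (equivalence to $y \bmod 2$, or trivial cases $y_i=0$, $y_i=(x+y)/k$) are automatic. Once this observation is made, both invocations are mechanical.
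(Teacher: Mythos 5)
Your reduction to \cref{prop:leftstrict} plus \cref{thm:parity} is exactly the paper's strategy, but only for the case $k \mid s$, and the bridging claim you make --- that $k \mid s$ is automatic for a tileable inner-strict diagram --- is false. The flaw is in the assertion that ribbon removal preserves the number of $\bw{1}$'s in each residue class mod $k$: positions in the boundary word are indexed cyclically modulo $x+y=2s$, so when the exchanged pair $(t,t+k)$ wraps around the end of the word, the residue of the moved $\bw{1}$ shifts by $2s \bmod k$, which is nonzero unless $k \mid 2s$, i.e.\ unless $k\mid s$ (as $k$ is odd). So the argument presupposes exactly the divisibility it is meant to establish. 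Even granting preservation, the conclusion does not follow: ``the $y_i$ of the outer boundary equal those of the staircase'' imposes no balance condition, since the staircase's own residue counts are simply $\lfloor s/k\rfloor$ or $\lceil s/k\rceil$ and need not be equal. Concretely, on $\mathfrak{C}_{2,2}$ the inner staircase $\bw{1010}$ together with the outer loop $\bw{0011}$ (both started at the origin) bounds a single L-shaped $3$-ribbon: an inner-strict diagram tiled by $3$-ribbons with $s=2$ rows and $3\nmid 2$. There the $3$-partitioning of a length-$4$ word is not even defined, so the good-pair machinery cannot be invoked at all, and larger such examples with several ribbons and several tilings exist.

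The paper closes this case with a separate device that your proposal is missing: when $k \nmid s$, pass to the diagram $D^k$ obtained by stacking $k$ cylindric copies of $D$. This is again inner-strict, has $ks$ rows so that \cref{prop:leftstrict} now applies, and every $k$-ribbon tiling of $D$ induces a tiling of $D^k$ in which each ribbon appears $k$ times; since $k$ is odd, the total height is multiplied by an odd factor and its parity is preserved, so the parity invariance for $D^k$ furnished by \cref{thm:parity} pulls back to $D$. Incorporating this replication step (and the trivial all-loop-ribbons case) would complete your argument.
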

\begin{proof} 
We can again focus on the case $k\leq x+y$, as if $k=x+y+1$ all ribbons are 
loop ribbons and they all have the same height. Note that if $D$ is non-empty, $k$ must be odd, 
as the inner boundary has $1$ on every other entry, and there is no way to 
exchange a $1$ and a $0$ whose indices differ by an even number. 
If $k$ divides the number of rows of $D$, Proposition~\ref{prop:left strict} says $(D,k)$ 
is a good pair, so result follows directly from Theorem~\ref{thm:parity} above. 
Otherwise, consider the cylindric shape $D^k$ formed by taking $k$ copies of $D$. 
This is also an inner-strict cylindric diagram. 
Furthermore, any filling of $D$ with $k$-ribbons 
can be seen as a filling of $D^k$ where every ribbon is repeated $k$ times. 
Because $k$ is odd, this does not change the parity of the total height of the ribbons. 
As $(D^k,k)$ is a good pair the result follows. 
\end{proof}
%

\section{Row-flagged skew Schur functions}

The flagged Schur functions were first introduced in \cite{LascouxSchutzenberger1982},
in relation to Schubert calculus.
The \defin{row-flagged skew Schur functions} were further studied by M.~Wachs~\cite{Wachs1985},
where she proved various identities involving divided difference operators.
They are defined as a sum over semi-standard Young tableaux but with a \defin{flag condition},
so that the entries in each row are restricted.
Let $\lambda/\mu$ be a skew shape with $\ell$ rows and $\avec = (a_1,\dotsc,a_\ell)$ 
and $\bvec = (b_1,\dotsc,b_\ell)$ are weakly increasing lists of non-negative integers.
Then let $\defin{\SSYT(\lambda/\mu,\avec,\bvec)}$
be the set of semi-standard Young tableaux of shape $\lambda/\mu$,
where the entries in row $j$ of $T$ are from the set $\{a_i, a_i+1,\dotsc,b_i\}$.

The \defin{row-flagged skew Schur function} is defined as
\[
 \defin{\schurS_{\lambda/\mu}(\avec,\bvec)} \coloneqq
 \sum_{T \in \SSYT(\lambda/\mu,\avec,\bvec)} \xvec_{T}
\]
Observe that this is a polynomial and not necessarily symmetric.

Similarly, the \defin{row-flagged complete homogeneous polynomial}
$\completeH_m(a,b)$ is defined as a row-flagged Schur polynomial with one row:
\[
\defin{\completeH_m(a,b)} \coloneqq
\begin{cases}
\schurS_{(m)}(a,b) & \text{if $m\geq 1$,}\\
1& \text{if $m = 0$,}\\
0& \text{otherwise.}
\end{cases}
\]
Note that this is simply the complete homogeneous symmetric polynomial of degree $m$
in the variables $\{x_a,x_{a+1},\dotsc,x_b \}$.
It will be convenient to define
\begin{equation}\label{eq:hprod}
 \defin{\completeH_\mu(\avec,\bvec)} \coloneqq \completeH_{\mu_1}(a_1,b_1)\completeH_{\mu_2}(a_2,b_2) \dotsm
 \completeH_{\mu_\ell}(a_\ell,b_\ell).
\end{equation}
Note that this is \emph{not} a symmetric function, but simply a polynomial.

I.~Gessel showed that the $\schurS_{\lambda/\mu}(\avec,\bvec)$ satisfy a Jacobi--Trudi type identity.
A different proof can be found in~\cite{Wachs1985}.
The identity states
\begin{equation}\label{eq:JT}
 \schurS_{\lambda/\mu}(\avec,\bvec) =
 \left| h_{\lambda_i-\mu_j - i + j}(a_j,b_i) \right|_{1 \leq i,j \leq \ell}.
\end{equation}

The \defin{flagged skew Kostka coefficient} $\defin{K_{\lambda/\mu,\nu}(\avec,\bvec)}$ is defined
as the coefficient of $\monomial_\nu$ in $\schurS_{\lambda/\mu}(\avec,\bvec)$.

\subsection{Gelfand--Tsetlin patterns}

Gelfand--Tsetlin patterns \defin{(GT-patterns)} were introduced
by I. Gelfand and M. Tsetlin in \cite{GelfandTsetlin1950}.
A short introduction to GT-patterns can be found in \cite[p.103]{StanleyEC2}.
There is a natural map between a GT-pattern and a semi-standard Young tableau.
A \defin{GT-pattern} is a parallelogram arrangement of integers, see \eqref{eq:gtpatternDef},
where entries must satisfy the conditions
\begin{equation*}
x_{i+1,j} \geq x_{ij} \text{ and } x_{ij} \geq x_{i+1,j+1} \label{eq:gtinequalities}
\end{equation*}
for all values of $i$, $j$ where the indexing is defined.
\begin{equation}\label{eq:gtpatternDef}
\setcounter{MaxMatrixCols}{20}
\begin{matrix}
x_{n1} & & x_{n2} & & \cdots & & \cdots & & x_{nm} \\
 & \ddots & & \ddots &  & &   & & & \ddots  \\
   &  &   x_{11} &  & x_{12} & & \cdots & & \cdots & & x_{1m} \\
   &  &    & x_{01} &    & x_{02} & & \cdots & & \cdots & & x_{0m}
\end{matrix}
\end{equation}
The inequalities simply state that horizontal rows are weakly decreasing,
down-right diagonals are weakly decreasing and down-left diagonals are weakly increasing.

The following result is well-known (and easy to prove).
\begin{lemma}[Gelfand--Tsetlin patterns and flagged SSYTs]\label{lem:flaggedGT}
Let $\lambda/\mu$ be a skew shape with $m$ rows and
let $\avec,\bvec$ be flags with $n$ entries.
The flagged semi-standard Young tableaux in
the set $\SSYT(\lambda/\mu,\avec,\bvec)$ are then
in bijection with Gelfand-Tsetlin
patterns $G$ satisfying the following conditions:
\begin{itemize}
 \item The top row of $G$ is given by $\lambda$; $x_{nj}=\lambda_j$ for all $j$.
 \item The bottom row of $G$ is given by $\mu$; $x_{0j}=\mu_j$ for all $j$.
 \item $x_{i,j}=x_{i-1,j}$ whenever $i \notin \{a_j,a_j+1,\dotsc,b_j\}$.
\end{itemize}
The bijection between patterns and tableaux is as follows:
The difference $x_{i,j}-x_{i-1,j}$ ($i\geq 1$) is the number of entries in row $j$
in the tableau equal to $i$.
\end{lemma}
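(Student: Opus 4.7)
The plan is to derive the claimed bijection by restricting the classical correspondence between semistandard Young tableaux of shape $\lambda/\mu$ with entries in $\{1,\dotsc,n\}$ and Gelfand--Tsetlin patterns with top row $\lambda$ and bottom row $\mu$. That classical bijection is recalled for example in \cite[Ch.~7]{StanleyEC2}: to $T \in \SSYT(\lambda/\mu)$ one associates the sequence of partitions $\mu = \lambda^{(0)} \subseteq \lambda^{(1)} \subseteq \dotsb \subseteq \lambda^{(n)} = \lambda$, where $\lambda^{(i)}$ is the shape of the sub-diagram consisting of $\mu$ together with all boxes of $T$ containing an entry at most $i$. Setting $x_{ij} \coloneqq \lambda^{(i)}_j$ produces a GT-pattern, and the semistandard condition on $T$ translates exactly into each skew shape $\lambda^{(i)}/\lambda^{(i-1)}$ being a horizontal strip, which is in turn equivalent to the inequalities displayed in \eqref{eq:gtinequalities}.

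The central observation is that under this correspondence, the difference $x_{ij} - x_{i-1,j}$ equals the number of entries in row $j$ of $T$ equal to $i$, since these are precisely the boxes appended to row $j$ when passing from $\lambda^{(i-1)}$ to $\lambda^{(i)}$. Consequently, the flag constraint that every entry of row $j$ of $T$ lies in $\{a_j, a_j+1, \dotsc, b_j\}$ is equivalent to requiring that the number of $i$'s in row $j$ vanishes whenever $i \notin \{a_j,\dotsc,b_j\}$, which is exactly the equality $x_{ij} = x_{i-1,j}$ of the third bullet. The first two bullets are the standard top-row and bottom-row boundary conditions inherited from the classical bijection.

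The forward and backward maps are then simply the restrictions of the classical GT-bijection to the subsets carved out by the flag condition, together with the verification that both directions respect the three listed constraints. The proof is essentially definitional once the standard GT-correspondence is taken as given, so there is no genuine obstacle; the only sanity check is that the identification $x_{ij}-x_{i-1,j} = \#\{\text{entries equal to } i \text{ in row } j\}$ is faithful, which follows immediately from the inductive construction of $\lambda^{(i)}$ from $\lambda^{(i-1)}$ by adding a horizontal strip labeled by $i$.
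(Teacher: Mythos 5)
Your proposal is correct and is precisely the standard argument the paper has in mind: the paper states this lemma without proof, calling it ``well-known (and easy to prove),'' and the intended justification is exactly your restriction of the classical SSYT--GT bijection (via the chain $\mu=\lambda^{(0)}\subseteq\dotsb\subseteq\lambda^{(n)}=\lambda$ of horizontal strips) to the face cut out by the flag equalities $x_{i,j}=x_{i-1,j}$. Nothing further is needed.
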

Observe that the extra flag conditions corresponds to restricting to
a certain face of the (ordinary) Gelfand--Tsetlin polytope.

It is also possible to model cylindric skew
semi-standard Young tableaux using Gelfand--Tsetlin patterns.
Now it is convenient to describe cylindric diagrams
by wrapping the rows rather than columns.
The cylindric condition translates to
the GT-pattern repeating left (right) horizontally,
with a shift up (down) of all the entries.
This shift of the entries is the number
of horizontal steps between repeating columns in
the cylindric diagram; see the examples below.

\begin{example}
Let $T$ be the SSYT below (with maximal entry $4$).
Note that we shall describe cylindric diagrams by cylindrically
wrapping rows rather than columns as this is more convenient
when considering Gelfand--Tsetlin patterns.
\[
\ytableausetup{boxsize=1.0em}
\ytableaushort{
{\cdot}{\cdot},
{}{}{}{},
{\none}{}{}{},
{\none}{\none}{\none}{}{}{}{},
{\none}{\none}{\none}{\none}{\none}{\none[\cdot]}{\none[\cdot]}
}
\qquad
\begin{ytableau}
2 & 3\\
1 & 2 & 3 & 4 \\
\none & 1 & 2 & 2 \\
\none & \none &\none & 1 & 2 & 4 & 4 \\
\end{ytableau}
\]
This CSSYT corresponds to the cylindric GT-pattern below:
\[
\begin{matrix}
\color{gray}{\cdots}&\color{gray}{7} && 7 && 4 && 4 && 2 && \color{gray}{1} & \color{gray}{\cdots} \\
&\color{gray}{\cdots}&\color{gray}{7} && 5 && 4 && 3 && 2 && \color{gray}{0} & \color{gray}{\cdots} \\
&&\color{gray}{\cdots}&\color{gray}{6} && 5 && 4 && 2 && 1  && \color{gray}{0}& \color{gray}{\cdots} \\
&&&\color{gray}{\cdots}&\color{gray}{5} && 4 && 2 && 1 && 0  && \color{gray}{-1}& \color{gray}{\cdots} \\
&&&&\color{gray}{\cdots}&\color{gray}{5} && 3 && 1 && 0 && 0  && \color{gray}{-2} & \color{gray}{\cdots} \\
\end{matrix}
\]
The central $5 \times 4$-parallelogram is repeated horizontally, but we add (subtract) a multiple of $5$ to every entry as we move left (right).
This corresponds to the fact that in the CSSYT,
the tableau repeats every $5$ columns.
\end{example}

\begin{example}\label{ex:cylindricGT}
Let $D$ be the cylindric shape shown below.
There are three cylindric SSYTs of this shape with weight $\beta = 2211$.
\[
\ytableausetup{boxsize=1.0em}
\ytableaushort{
{\cdot}{\cdot}{\cdot},
{\none}{}{},
{\none}{\none}{}{},
{\none}{\none}{\none[\cdot]}{\none[\cdot]}{\none[\cdot]}
}
\qquad
\qquad
\ytableaushort{
{1}{2}{4},
{\none}{1}{3},
{\none}{\none}{2}{5},
{\none}
}
\qquad
\ytableaushort{
{1}{2}{5},
{\none}{1}{3},
{\none}{\none}{2}{4},
{\none}
}
\qquad
\ytableaushort{
{1}{2}{5},
{\none}{1}{4},
{\none}{\none}{2}{3},
{\none}
}
\]
The corresponding cylindric GT-patterns are
\[
\setlength{\arraycolsep}{2.5pt}
\renewcommand{\arraystretch}{0.8}
\begin{matrix}
4 &   & 3 &   & 3\\
 & 3 &   & 3 &   & 3\\
 &  & 3 &   & 3 &   & 2\\
 &  &  & 3 &   & 2 &   & 2\\
 &  &  &  & 2 &   & 2 &   & 1\\
 &  &  &  &  & 2 &   & 1 &   & 0\\
\end{matrix}
\begin{matrix}
4 &   & 3 &   & 3\\
 & 4 &   & 3 &   & 2\\
 &  & 3 &   & 3 &   & 2\\
 &  &  & 3 &   & 2 &   & 2\\
 &  &  &  & 2 &   & 2 &   & 1\\
 &  &  &  &  & 2 &   & 1 &   & 0\\
\end{matrix}
\begin{matrix}
4 &   & 3 &   & 3\\
 & 4 &   & 3 &   & 2\\
 &  & 4 &   & 2 &   & 2\\
 &  &  & 3 &   & 2 &   & 2\\
 &  &  &  & 2 &   & 2 &   & 1\\
 &  &  &  &  & 2 &   & 1  &   & 0.\\
\end{matrix}
\]
\end{example}

The Gelfand--Tsetlin patterns corresponding to the
tableaux counted by $K_{\lambda/\mu,\nu}(\avec,\bvec)$
are exactly the lattice points in the polytope defined by the inequalities in
\eqref{eq:gtinequalities} and the equalities in
\cref{lem:flaggedGT} together with the equalities imposed
by $\nu$.
Let us call this polytope \defin{$\mathcal{P}(\lambda/\mu,\nu,\alpha,\beta)$}.
This polytope is in general not an integral polytope
(first proved in \cite{DeLoeraMcAllister2004}) meaning that
some of its vertices are not lattice points.
Nevertheless, this polytope has a polynomial Ehrhart function
as we shall see later in \cref{cor:flaggedKostkaPolynomial}.

Similarly, for any cylindric diagram $D$
we can define the \defin{cylindric Gelfand--Tsetlin polytope}
\defin{$\mathcal{P}(D,\nu)$} where lattice points correspond to
cylindric tableaux with shape $D$ and weight $\nu = (\nu_1,\dotsc,\nu_n)$.
We can describe $D$ via some skew shape, $\lambda/\mu$ (using $m$ rows)
and the cylindric shape repeats every $\ell$ columns.
Then the equalities and inequalities used to define
$\mathcal{P}(D,\nu)$ are---in addition to those in \eqref{eq:gtinequalities}---
the following:
\begin{itemize}
 \item $x_{nj}=\lambda_j$ for all $j$;
 \item $x_{0j}=\mu_j$ for all $j$;
 \item $\nu_i = \sum_j x_{i,j}-x_{i-1,j}$ for all $i$ (total number of $i$s in the tableau is $\nu_i$);
 \item $x_{im} + \ell \geq x_{i1}$ for all $i$ (cylindric condition).
\end{itemize}

\subsection{Saturation of flagged Kostka coefficients}

Recall that $K_{\lambda/\mu,\nu}(\avec,\bvec)$
denotes the number of flagged semi-standard Young tableaux
with shape $\lambda/\mu$ and weight $\nu$ and where the
entries in each row are constrained by the flags $\avec$ and $\bvec$.

We have the following easy implication for any positive integer $k$:
\[
 K_{\lambda/\mu,\nu}(\avec,\bvec) > 0 \implies
 K_{k\lambda/k\mu,k\nu}(\avec,\bvec) > 0.
\]
To prove this, simply consider a semi-standard tableau contributing to the
first quantity. We then make $k$ copies of every column, and obtain
a tableau contributing to the second quantity.
On the Gelfand--Tsetlin side, this map simply corresponds to multiplying
all entries in the pattern by the factor $k$.

The main theorem of this section is the reverse implication.
\begin{theorem}[Saturation for flagged skew Kostka coefficients]\label{thm:flagSaturation}
Let $\lambda/\mu$ be a skew shape and $\avec,\bvec$ be flags.
Furthermore, let $k \geq 1$ be a positive integer. We then have the implication
\[
 K_{k\lambda/k\mu,k\nu}(\avec,\bvec) > 0  \implies
 K_{\lambda/\mu,\nu}(\avec,\bvec) > 0.
\]
\end{theorem}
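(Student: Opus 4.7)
As the authors suggest in the introduction, the natural route is to reduce the theorem to the statement that the flagged Gelfand--Tsetlin polytope $\mathcal{P}(\lambda/\mu,\nu,\mathbf{a},\mathbf{b})$ contains a lattice point whenever it is non-empty. The reduction is elementary: the equalities and inequalities defining $\mathcal{P}$ scale linearly with $\lambda$, $\mu$ and $\nu$, so
\[
 \mathcal{P}(k\lambda/k\mu,k\nu,\mathbf{a},\mathbf{b}) \;=\; k \cdot \mathcal{P}(\lambda/\mu,\nu,\mathbf{a},\mathbf{b}).
\]
The hypothesis furnishes a lattice point in the left-hand polytope; dividing by $k$ produces a rational point in the right-hand polytope, proving non-emptiness. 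The stronger claim would then yield a lattice point, i.e., an element of $\SSYT(\lambda/\mu,\mathbf{a},\mathbf{b})$ of weight $\nu$, so $K_{\lambda/\mu,\nu}(\mathbf{a},\mathbf{b})>0$.

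To prove the polytope claim, I would combine the Jacobi--Trudi identity~\eqref{eq:JT} with the Lindström--Gessel--Viennot lemma. This interprets $K_{\lambda/\mu,\nu}(\mathbf{a},\mathbf{b})$ as the number of non-intersecting $\ell$-tuples of lattice paths, one per row of the tableau, where path $i$ has sources and sinks determined by $\lambda_i,\mu_i,i$ and uses horizontal steps at heights restricted to the interval $\{a_i,\dotsc,b_i\}$, with the overall count of horizontal steps at height $h$ being $\nu_h$. Non-emptiness of $\mathcal{P}$ then translates to the existence of a \emph{rational} non-intersecting path family with these data, and my task reduces to extracting an \emph{integer} such family.

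I would attempt this extraction by a top-down rounding of the underlying GT pattern. Starting from the top row (which is the integer partition $\lambda$) and working downward, one would round each row to integers so that (a) the interlacing with the already-rounded row above is preserved, (b) the flag equalities forcing entries to equal those of a neighbouring row are respected (so those entries are automatically integer), and (c) the row sum matches the integer total dictated by $\nu$. Fixing the upper row and the row sum, the admissible values for a single row form a transportation-type polytope whose constraint matrix is totally unimodular, so integer solutions exist whenever rational ones do; one iterates this down the pattern.

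\textbf{Main obstacle.} The crux is step~(c): producing a rounding that simultaneously honours the GT interlacing and the row-sum equalities. Unflagged Gelfand--Tsetlin polytopes are known to fail integrality in general~\cite{DeLoeraMcAllister2004}, so vertex integrality is not available globally; one must instead exploit the row-flag structure (the weak monotonicity of $\mathbf{a}$ and $\mathbf{b}$) to fibre $\mathcal{P}$ over a tower of integral transportation polytopes and verify that a compatible integer lift exists at every stage. This is in spirit a network-flow argument, parallel to (but considerably simpler than) the hive-based machinery powering Knutson--Tao saturation for Littlewood--Richardson coefficients.
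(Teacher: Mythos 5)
Your reduction is sound and is essentially the paper's framework: both arguments live entirely on the Gelfand--Tsetlin side, and your observation that $\mathcal{P}(k\lambda/k\mu,k\nu,\avec,\bvec)=k\cdot\mathcal{P}(\lambda/\mu,\nu,\avec,\bvec)$ correctly reduces the theorem to the claim that a non-empty flagged GT-polytope contains a lattice point (the paper records exactly this as a corollary of its proof). The problem is that you have not proved that claim, and the mechanism you propose for it does not work as stated. Rounding a GT pattern row by row from the top fails because the feasible set for a single row is not just ``interlace with the already-rounded row above, respect the flag equalities, and hit the prescribed row sum'': the rounded row must also admit a completion by the remaining rows with their own prescribed sums. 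The transportation-type polytope you describe for one row is indeed integral, but an integer point of it chosen greedily can render the rest of the pattern infeasible, and encoding ``completable below'' as additional linear constraints on that row while preserving total unimodularity is precisely the content of the missing lemma. You acknowledge this as the ``main obstacle'' but leave it open, so the proposal is a plan rather than a proof. (The Lindstr\"om--Gessel--Viennot detour adds nothing here; a ``rational non-intersecting path family'' is just the rational GT pattern again.)

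For comparison, the paper closes this gap by a concrete algorithm imported from the first author's earlier work on Kostka saturation: starting from a genuine lattice point $G$ of the stretched polytope $\mathcal{P}(k\lambda/k\mu,k\nu,\avec,\bvec)$, one performs a sequence of local modifications $G=G_1\to G_2\to\dotsb\to G'$, each staying inside the polytope and strictly decreasing the left-to-right, top-to-bottom reading word in lexicographic order, with the invariant that entries already divisible by $k$ are never altered; termination forces every entry of $G'$ to be a multiple of $k$, and $G'/k$ is the desired lattice point. The only flagged-specific input is the observation that the flag conditions merely force certain entries to equal $k\lambda_j$ or $k\mu_j$, which are multiples of $k$ and hence untouched by the algorithm. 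If you want to complete your route, you would either need to import that same lemma or supply a genuinely new argument for why a compatible integer lift exists at every stage of your tower; as written, the central step is missing.
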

The non-flagged version (of regular Kostka coefficients)
follows from the celebrated Knutson--Tao Saturation theorem
proved in \cite{KnutsonTau1999}.
They prove saturation for the Littlewood--Richardson coefficients,
a family which contains the skew Kostka coefficients.

The implication for the non-flagged (ordinary) skew Kostka coefficients,
\[
 K_{k\lambda/k\mu,k\nu} > 0  \implies
 K_{\lambda/\mu,\nu} > 0,
\]
is a bit easier to prove compared to the Littlewood--Richardson
coefficients. Such a proof was given by the first author in
\cite{Alexandersson2015KSaturation}.
It turns out that the method used there is in fact general enough
to prove \cref{thm:flagSaturation} without any modification.

Instead of repeating that proof in detail, we
simply give an outline of the proof
and describe how it is compatible with the flagged case.
The proof uses the language of Gelfand--Tsetlin patterns
and it has some consequences in terms of the corresponding polytope
$\mathcal{P}(\lambda/\mu,\nu,\alpha,\beta)$.
\medskip

The proof method used in \cite{Alexandersson2015KSaturation}
together with the observation regarding the flags
gives a proof of \cref{thm:flagSaturation}.
We provide an outline below.

\begin{proof}[Proof sketch]
It suffices to show that if the polytope
$\mathcal{P}(k\lambda/k\mu,k\nu,\alpha,\beta)$ contains a lattice point $G$,
we can modify $G$ via a sequence of steps (described explicitly in \cite{Alexandersson2015KSaturation})
\[
 G = G_1 \to G_2 \to \dotsb \to G_\ell = G'
\]
where every $G_j$ is a lattice point in
$\mathcal{P}(k\lambda/k\mu,k\nu,\alpha,\beta)$,
and the final GT-pattern $G'$
has the property that every entry is a multiple of $k$.
If we then divide all entries in $G'$ by $k$, we have found a
lattice point in $\mathcal{P}(\lambda/\mu,\nu,\alpha,\beta)$
and thus proved $K_{\lambda/\mu,\nu}(\avec,\bvec) > 0$.

The sequence of steps that takes $G$ to $G'$ never
touches the first and last row (the shape of the tableau).
In fact, once an entry becomes a multiple of $k$ it is
never modified in subsequent steps.

By reading the rows in a GT-pattern from left to right,
top to bottom, we obtain a string. We can in each step
ensure that we decrease this string in lexicographic order.
Moreover, we can always perform a step whenever
at least one entry in the current GT-pattern is not a multiple of $k$.

The flag $a_j = r$ for $G$ implies that
\[
 k\mu_j = x_{0,j} = x_{1,j}=\dotsb = x_{r-1,j}.
\]
That is, all these entries are equal to $k\mu_j$.
Similarly, if $b_j=s$, then
\[
 k\lambda_j = x_{n,j} = x_{n-1,j}= \dotsb = x_{s,j}.
\]
Since the above steps never modify entries which are multiples of $k$,
the flag conditions imposed by $\avec$ and $\bvec$ are preserved in each step. This last paragraph
is the adaptation needed for the flagged case.
\end{proof}
The above proof actually shows something stronger.

\begin{corollary}
Let $\mathcal{P}(\lambda/\mu,\nu,\alpha,\beta)$
be a non-empty GT-polytope.
Then the the lexicographically smallest point $P$
in the polytope is a vertex with integer entries.
\end{corollary}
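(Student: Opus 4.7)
The plan is to combine the standard fact that the lex-minimum of a rational polytope is achieved at a rational vertex with the step-decreasing procedure used in the proof of \cref{thm:flagSaturation}.

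First I would observe that $\mathcal{P}(\lambda/\mu,\nu,\alpha,\beta)$ is a bounded polytope cut out by linear equalities and inequalities with integer coefficients, so all of its vertices are rational. Lexicographic minimization over a polytope is the limit of ordinary linear minimization with weight vector $(\epsilon_1,\epsilon_2,\dotsc)$ satisfying $\epsilon_1 \gg \epsilon_2 \gg \dotsb > 0$; since each such linear objective attains its minimum at a vertex and the vertex set is finite, the lex-smallest point $P$ is necessarily a (rational) vertex.

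Next, I would pick a positive integer $k$ so that $kP$ has integer entries. Scaling each GT-entry by $k$ is a lex-order-preserving affine bijection between $\mathcal{P}(\lambda/\mu,\nu,\alpha,\beta)$ and $\mathcal{P}(k\lambda/k\mu,k\nu,\alpha,\beta)$; note that the flags $\avec,\bvec$ are unchanged under this scaling and only the prescribed shape and weight rescale. Consequently $kP$ is the lex-smallest point of the scaled polytope, and in particular its lex-smallest lattice point.

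Finally, I would invoke the central observation from the proof of \cref{thm:flagSaturation}: every lattice point of $\mathcal{P}(k\lambda/k\mu,k\nu,\alpha,\beta)$ that has at least one entry not divisible by $k$ admits an explicit move to a strictly lex-smaller lattice point in the same polytope, and the moves never touch entries already divisible by $k$ (so the flag-induced equalities are automatically preserved). It follows that $kP$ has every entry divisible by $k$, so $P$ itself has integer entries, which completes the argument. The only nontrivial point is bookkeeping, namely checking that the moves of \cite{Alexandersson2015KSaturation} respect the additional flag constraints $x_{ij}=x_{i-1,j}$; but this is precisely the observation already used to upgrade \cref{thm:flagSaturation} from its non-flagged counterpart, so no new difficulty arises.
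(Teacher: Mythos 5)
Your argument is correct and follows essentially the same route as the paper: lex-minimality forces $P$ to be a vertex, scaling by $k$ makes $kP$ the lex-smallest (hence lex-smallest lattice) point of $\mathcal{P}(k\lambda/k\mu,k\nu,\alpha,\beta)$, and the step-decreasing procedure from the saturation proof then forces every entry of $kP$ to be a multiple of $k$. The only difference is that you spell out a few details the paper leaves implicit (why the lex-minimum is a vertex, and that scaling preserves lex order), which does no harm.
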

\begin{proof}
The fact that $P$ is lexicographically smallest ensures that it is a vertex,
so we only need to show that it has integer coordinates.

Suppose $P$ was not an integer point, but only having rational coordinates.
We could then magnify it with some factor $k$, making $kP$
an integer vertex of $\mathcal{P}(k\lambda/k\mu,k\nu,\alpha,\beta)$.
This would then be the lexicographically smallest member of this larger polytope,
so in the above method, we would not be able to to any additional step. This implies
that all entries of $kP$ are multiples of $k$,
so $P$ must have integer coordinates.
\end{proof}

\subsection{Saturation of cylindric Kostka coefficients}

We can now use \cref{thm:flagSaturation} to prove saturation
for the cylindric skew Kostka coefficients.
Let $\defin{\CSSYT(D,\nu)}$ denote the set of cylindric SSYTs with shape $D$ and weight $\nu$.
When $k$ is a positive integer we let $kD$ denote the cylindric diagram obtained from $D$
where every box has been subdivided into $k$ boxes horizontally.
For example, with $k=3$, we have
\[
D=
\ytableaushort{
{\cdot},
{\cdot},
{\,}{\,}{\,},
{\none}{\none}{\,}{\,}{\none[\cdot]},
{\none}{\none}{\,}{\,}{\none[\cdot]}
}
\qquad
\implies
\qquad
kD=
\ytableaushort{
{\cdot}{\,}{\,},
{\cdot}{\,}{\,},
{\,}{\,}{\,}{\,}{\,}{\,}{\,}{\,}{\,},
{\none}{\none}{\none}{\none}{\none}{\none}{\,}{\,}{\,}{\,}{\,}{\,}{\none[\cdot]},
{\none}{\none}{\none}{\none}{\none}{\none}{\,}{\,}{\,}{\,}{\,}{\,}{\none[\cdot]}
}
\]

\begin{theorem}[Saturation of cylindric Kostka coefficients]\label{thm:cylindricSaturation}
Let $D$ be a cylindric diagram and $k \geq 1$ an integer.
Then
\[
  |\CSSYT(kD, k\nu)| > 0 \iff |\CSSYT( D,  \nu)| >0.
\]
\end{theorem}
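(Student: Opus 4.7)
The plan is to reduce cylindric saturation to row-flagged saturation (\cref{thm:flagSaturation}) by exploiting the decomposition of cylindric skew Schur functions as non-negative sums of row-flagged skew Schur functions, as announced in the introduction.

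For the forward direction, given $T \in \CSSYT(D, \nu)$, replace each column of $T$ with $k$ identical copies. The resulting filling is a well-defined element of $\CSSYT(kD, k\nu)$: duplicating columns preserves the weak row, strict column, and cylindric conditions, while multiplying each coordinate of $\nu$ by $k$.

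For the reverse direction, fix $n = \length(\nu)$ and restrict to tableaux with entries in $\{1,\dotsc,n\}$. I will appeal to an identity of the form
\begin{equation*}
  \schurS_D(x_1,\dotsc,x_n) = \sum_{\pi \in \Pi(D,n)} \schurS_{\lambda^{(\pi)}/\mu^{(\pi)}}(\avec^{(\pi)}, \bvec^{(\pi)}),
\end{equation*}
where $\Pi(D,n)$ is a finite index set. Concretely, each summand is obtained by cutting the cylinder along a vertical line and recording, row by row, the cyclic wrap-around data; each choice produces an unrolled skew shape together with row-flags encoding the wrap-around constraint. The essential point is that scaling $D \to kD$ only stretches the unrolled shapes horizontally, so that
\begin{equation*}
  |\CSSYT(kD, k\nu)| = \sum_{\pi \in \Pi(D,n)} K_{k\lambda^{(\pi)}/k\mu^{(\pi)}, k\nu}(\avec^{(\pi)}, \bvec^{(\pi)}),
\end{equation*}
with the \emph{same} index set $\Pi(D,n)$ and the \emph{same} flag pairs $(\avec^{(\pi)}, \bvec^{(\pi)})$ as for $D$, because these depend only on $n$ and on the combinatorial wrap-around structure, both unaffected by $k$-fold horizontal subdivision of the boxes.

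If $|\CSSYT(kD, k\nu)| > 0$ then non-negativity forces some term on the right-hand side to be positive, and \cref{thm:flagSaturation} yields positivity of the corresponding unscaled flagged Kostka coefficient $K_{\lambda^{(\pi_0)}/\mu^{(\pi_0)}, \nu}(\avec^{(\pi_0)}, \bvec^{(\pi_0)})$. The analogous decomposition for $\schurS_D$ then gives $|\CSSYT(D, \nu)| > 0$. The main obstacle is establishing the decomposition and its invariance under $k$-fold horizontal dilation: one must spell out an explicit bijection between cylindric tableaux and pairs consisting of wrap-around data together with a row-flagged SSYT of the unrolled shape, and verify that this bijection intertwines the operations $D \mapsto kD$ and $\lambda^{(\pi)}/\mu^{(\pi)} \mapsto k\lambda^{(\pi)}/k\mu^{(\pi)}$. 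Once this combinatorial setup is in place, both implications of the theorem follow formally from non-negativity and \cref{thm:flagSaturation}.
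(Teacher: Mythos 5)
Your proposal takes essentially the same route as the paper: cut the cylinder open, encode the wrap-around constraint by row flags on the unrolled skew shape $k\lambda/k\mu$, and invoke \cref{thm:flagSaturation}; the paper simply reads the flags off the given witness tableau $T\in\CSSYT(kD,k\nu)$ rather than summing over a universal set of wrap-around data. One caveat: your displayed identity $|\CSSYT(kD,k\nu)|=\sum_{\pi}K_{k\lambda^{(\pi)}/k\mu^{(\pi)},k\nu}(\avec^{(\pi)},\bvec^{(\pi)})$ is not an exact equality, since the flagged sets overlap and each contains tableaux whose boundary columns differ from the prescribed data (the paper's own footnote stresses that the flagged-to-cylindric map is only an injection), but your argument really only uses the two one-sided statements --- every cylindric tableau lies in some flagged set, and every flagged tableau with these flags wraps to a valid cylindric tableau --- both of which hold.
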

\begin{proof}
As before, the $(\Longleftarrow)$ direction is easy.
Suppose now we have some tableau $T \in \CSSYT(kD, k\nu)$.
By making a cut between the ``last'' and ``first'' column of $T$ (this cut can be between any two columns of different shape)
we get a skew shape $k\lambda/k\mu$.
We may then interpret $T$ as an element in $\SSYT(k\lambda/k\mu,\avec,\bvec)$,
where the flag $\avec$ is chosen to match the first column of $T$
and the flag $\bvec$ is chosen to match the last column of $T$.

The flags $\avec$ and $\bvec$ now have the property that \emph{any}\footnote{Note that this is just an injection in general, not a bjiection!}
tableau in $\SSYT(k\lambda/k\mu,\avec,\bvec)$ can be reinterpreted
as a cylindric tableau in $\CSSYT(kD, k\nu)$.

Since $\SSYT(k\lambda/k\mu,\avec,\bvec)$ is non-empty,
\cref{thm:flagSaturation} implies that there is some tableau $T'$
in $\SSYT(\lambda/\mu,\avec,\bvec)$. The flags now ensure that we can glue the first
and last column of $T'$ to obtain a valid cylindric tableau in $\CSSYT(D, \nu)$.
\end{proof}

According to A.~Knutson (personal communication) \cref{thm:cylindricSaturation} also follows from
a result in \cite{Belkale2007}.

\section{Polynomiality of stretched row-flagged Kostka coefficients}

Our first goal is to express the Kostka coefficient $K_{\lambda/\mu,\nu}(\avec,\bvec)$ in terms of contingency tables.
Our approach is inspired by the method used in \cite{Thawinrak2022x} but contingency tables
are better suited when dealing with non-symmetric polynomials.
The non-flagged case was proved earlier by E.~Rassart~\cite{Rassart2004}.

\begin{definition}\label{def:contingency}
Let $\alpha,\beta \in \setZ^{\ell}$ and $\avec,\bvec \in \setN^\ell$.
Then let $\defin{M^{(\avec,\bvec)}_\beta[\alpha]}$ be the
number of non-negative $\ell{\times}\ell$-matrices $M = (m_{ij})$
satisfying
\begin{enumerate}
 \item column $j$ has sum $\alpha_j$ for all $1\leq j \leq \ell$,
 \item row $i$ has sum $\beta_i$ for all $1\leq i \leq \ell$,
 \item $i \notin \{a_j,\dotsc,b_j\}$ implies $m_{ij}=0$.
\end{enumerate}
\end{definition}
In other words, $M^{(\avec,\bvec)}_\beta[\alpha]$ enumerate
certain contingency tables with given margins $\alpha$ and $\beta$.
Note that enumerating contingency tables is $\#P$-hard \cite{DyerKannanMount1997}.

\begin{lemma}\label{eq:hContingency}
Let $\alpha,\avec,\bvec$ be as before. Then the expansion of $\completeH_{\alpha}(\avec,\bvec)$
in the monomial basis is given by
\[
  \completeH_{\alpha}(\avec,\bvec) = \sum_{\beta} M^{(\avec,\bvec)}_\beta[\alpha] \cdot \xvec^{\beta},
\]
where the sum is over all non-negative integer vectors $\beta$.
\end{lemma}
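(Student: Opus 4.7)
The plan is to unfold the definition of $\completeH_\alpha(\avec,\bvec)$ as a product of single-row row-flagged complete homogeneous polynomials, expand each factor in the monomial basis, and then re-index the resulting product by contingency tables.

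First, recall that $\completeH_{m}(a,b)$ is the complete homogeneous symmetric polynomial of degree $m$ in the variables $\{x_a, x_{a+1}, \dotsc, x_b\}$. Consequently, it admits the monomial expansion
\begin{equation*}
\completeH_{m}(a,b) = \sum_{\substack{\gamma \in \setN^{\setP} \\ \gamma_i = 0 \text{ if } i \notin [a,b] \\ \sum_i \gamma_i = m}} \prod_{i \geq 1} x_i^{\gamma_i},
\end{equation*}
where the sum runs over all non-negative integer vectors whose support is contained in $\{a,a+1,\dotsc,b\}$ and whose entries sum to $m$.

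Next, by \eqref{eq:hprod} we have $\completeH_\alpha(\avec,\bvec) = \prod_{j=1}^\ell \completeH_{\alpha_j}(a_j, b_j)$. Substituting the expansion above for each factor and interchanging product with sum, the contribution indexed by a choice of vector $(m_{\cdot,j})_{j=1}^\ell$ can be encoded as an $\ell \times \ell$ non-negative integer matrix $M=(m_{ij})$ whose $j$-th column sums to $\alpha_j$ and whose entry $m_{ij}$ is forced to be zero whenever $i \notin \{a_j,\dotsc,b_j\}$. The associated monomial is $\prod_j \prod_i x_i^{m_{ij}} = \prod_i x_i^{\beta_i}$, where $\beta_i \coloneqq \sum_j m_{ij}$ is the $i$-th row sum of $M$.

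Finally, I would group the terms in the resulting sum according to the row-sum vector $\beta$. By \cref{def:contingency}, the number of matrices $M$ satisfying the column-sum condition, the row-sum condition, and the support condition is precisely $M^{(\avec,\bvec)}_\beta[\alpha]$. Hence
\begin{equation*}
\completeH_\alpha(\avec,\bvec) = \sum_{M} \xvec^{(\text{row sums of } M)} = \sum_{\beta \in \setN^\ell} M^{(\avec,\bvec)}_\beta[\alpha]\, \xvec^{\beta},
\end{equation*}
which is the desired identity. There is no real obstacle here: the entire argument is a reorganisation of the defining sum, with the only subtle bookkeeping being the observation that matrix entries forced to vanish correspond exactly to the variables excluded from the row-flagged $\completeH_{\alpha_j}(a_j,b_j)$.
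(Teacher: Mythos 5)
Your proof is correct and is essentially the same argument as the paper's: expand the product \eqref{eq:hprod} factor by factor in the monomial basis and identify each choice of one monomial per factor with a column of a contingency table, so that grouping by the row-sum vector $\beta$ yields the claimed coefficient $M^{(\avec,\bvec)}_\beta[\alpha]$. The paper's version is just a terser statement of the same column-to-factor correspondence, so there is nothing to add.
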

\begin{proof}
 Consider the left hand side, which is a product as in \eqref{eq:hprod}.
 Consider the coefficient of $\xvec^{\alpha}$. On the left hand side,
 we must choose monomials where the exponents add to $\alpha$.
 But this is exactly what the right hand side encodes:
 the choices of columns of the contingency tables (matrices) correspond to the factors in the product.
\end{proof}

\begin{example}
Let us compute the coefficient of $x_1^2 x_2^5 x_3^7$ in $\completeH_{356}(\avec,\bvec)$,
for $\avec=(1,2,2)$, $\bvec=(2,3,3)$.
By definition, $\completeH_{356}(\avec,\bvec)$ is the product
\begin{align*}
 \completeH_{356}(\avec,\bvec) &= \left(x_1^3+ x_1^2x_2 + x_1x_2^2 + x_2^3\right) \cdot \\
 &\phantom{=}\left(x_2^5+x_2^4x_3 + x_2^3x_3^2+\dotsb + x_2 x_3^4 + x_3^5 \right) \cdot \\
 &\phantom{=}\left(x_2^6+x_2^5x_3+x_2^4x_3^2 + \dotsb +x_2 x_3^5+ x_3^6 \right)
\end{align*}
and if we expand this, there are 5 ways to obtain the monomial $x_1^2 x_2^5 x_3^3$,
by choosing monomials from each factor:
\begin{align*}
  &x_1^2 x_2 \cdot  x_3^5  \cdot x_2^4 x_3^2 \; + \; x_1^2 x_2 \cdot  x_2 x_3^4  \cdot x_2^3 x_3^3 \; + \\
  &x_1^2 x_2 \cdot  x_2^2 x_3^3 \cdot x_2^2 x_3^4 \; + \;  x_1^2 x_2 \cdot  x_2^3 x_3^2 \cdot x_2 x_3^4 + \\
  &x_1^2 x_2 \cdot  x_2^4 x_3 \cdot x_3^5.
\end{align*}
The corresponding contingency tables, with column sums $3,5,6$, respectively,
and row sums $2,5,7$ are the following five:
\[
\ytableausetup{boxsize=1.0em}
\begin{ytableau}
 2 & 0 & 0 \\
 1 & 0 & 4 \\
 0 & 5 & 2 \\
\end{ytableau}
\quad
\begin{ytableau}
 2 & 0 & 0 \\
 1 & 1 & 3 \\
 0 & 4 & 3 \\
\end{ytableau}
\quad
\begin{ytableau}
 2 & 0 & 0 \\
 1 & 2 & 2 \\
 0 & 3 & 4 \\
\end{ytableau}
\quad
\begin{ytableau}
 2 & 0 & 0 \\
 1 & 3 & 1 \\
 0 & 2 & 5 \\
\end{ytableau}
\quad
\begin{ytableau}
 2 & 0 & 0 \\
 1 & 4 & 0 \\
 0 & 1 & 6 \\
\end{ytableau}
\]
\end{example}

\begin{proposition}\label{prop:kostkaInContingency}
Let $\lambda/\mu$ be a skew shape with $\ell$ rows and $\avec,\bvec \in \setN^\ell$.
Then
 \begin{equation}
 K_{\lambda/\mu,\beta}(\avec,\bvec) =
 [\xvec^{\beta}]\schurS_{\lambda/\mu}(\avec,\bvec) =
 \sum_{\sigma \in \symS_n}
	\varepsilon(\sigma) M^{(\sigma(\avec),\bvec)}_\beta[\sigma(\lambda + \delta) - (\mu+\delta)],
\end{equation}
where $\delta \coloneqq (n-1,n-2,\dotsc,2,1,0)$.
\end{proposition}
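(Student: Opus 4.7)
The plan is to start from the flagged Jacobi--Trudi identity \eqref{eq:JT} for $\schurS_{\lambda/\mu}(\avec,\bvec)$, expand the determinant as a signed sum over $\symS_n$, and recognize each resulting term as the coefficient of $\xvec^{\beta}$ in a product of row-flagged complete homogeneous polynomials, to which \cref{eq:hContingency} applies directly.

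Concretely, the first step will be to write
\[
 \schurS_{\lambda/\mu}(\avec,\bvec)
 = \sum_{\sigma \in \symS_n} \varepsilon(\sigma) \prod_{i=1}^n h_{\lambda_i - \mu_{\sigma(i)} - i + \sigma(i)}(a_{\sigma(i)}, b_i)
\]
directly from \eqref{eq:JT}. The second step is to reindex each product via $j = \sigma(i)$, so that the $j$th factor has the form $h_{\alpha_j}$ with $\alpha_j = \lambda_{\sigma^{-1}(j)} + j - \sigma^{-1}(j) - \mu_j$. This exponent is precisely the $j$th entry of $\sigma(\lambda+\delta)-(\mu+\delta)$ under the convention $[\sigma(\vvec)]_j = v_{\sigma^{-1}(j)}$, while the flag arguments reassemble into the pair $(\sigma(\avec), \bvec)$ (after, if needed, transposing the Jacobi--Trudi matrix so that the row-flag $\avec$ lines up with the statement). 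Extracting $[\xvec^{\beta}]$ in each permutation term and applying \cref{eq:hContingency} converts it into $M_{\beta}^{(\sigma(\avec), \bvec)}[\sigma(\lambda+\delta)-(\mu+\delta)]$; after renaming $\sigma \leftrightarrow \sigma^{-1}$ in the summation (which preserves $\varepsilon$), one obtains the claimed identity.

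The main obstacle is the careful bookkeeping of permutation conventions: verifying that the ``$-i+j$''-shifts in the Jacobi--Trudi entries combine with $(\lambda_i,\mu_{\sigma(i)})$ to give exactly the $\delta$-shifted vector $\sigma(\lambda+\delta)-(\mu+\delta)$, and that the flag is permuted on the side the statement prescribes. Once these alignments are checked, the argument is purely formal and does not require any deeper combinatorial input beyond \eqref{eq:JT} and \cref{eq:hContingency}. Terms where some exponent $\alpha_j$ becomes negative vanish consistently on both sides: $h_m = 0$ for $m<0$ by convention, while any negative entry in $\alpha$ forces $M_{\beta}^{(\cdot,\cdot)}[\alpha] = 0$ directly from \cref{def:contingency}.
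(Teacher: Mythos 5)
Your proposal follows exactly the paper's own route: expand the flagged Jacobi--Trudi determinant \eqref{eq:JT} as a signed sum over $\symS_n$, recognize each permutation term as a product of row-flagged complete homogeneous polynomials, and extract $[\xvec^{\beta}]$ via \cref{eq:hContingency}. If anything, you are more explicit than the paper about the reindexing $j=\sigma(i)$ and about which flag ends up permuted, a bookkeeping point the paper's two-line proof passes over silently.
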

\begin{proof}
We can restate the Jacobi--Trudi identity \eqref{eq:JT} as
\begin{equation}
 \schurS_{\lambda/\mu}(\avec,\bvec) =
 \left| h_{(\lambda_i + \delta_i) - (\mu_j+\delta_j) }(a_j,b_i) \right|_{1 \leq i,j \leq \ell}.
\end{equation}
The determinant is then expressed as a sum over
permutations and we get
\begin{equation}
 \schurS_{\lambda/\mu}(\avec,\bvec) =
 \sum_{\sigma \in \symS_n}
	\varepsilon(\sigma)
  h_{\sigma(\lambda + \delta) - (\mu+\delta)}(\sigma(\avec),\bvec).
\end{equation}
Expanding both sides in the monomial basis and using \cref{eq:hContingency},
we finally have
\begin{equation}
 [\xvec^{\beta}] \schurS_{\lambda/\mu}(\avec,\bvec) =
 \sum_{\sigma \in \symS_n}
	\varepsilon(\sigma)
	M^{(\sigma(\avec),\bvec)}_\beta[\sigma(\lambda + \delta) - (\mu+\delta)].
\end{equation}
\end{proof}

\subsection{Polynomiality of contingency table counting}

Let $\lambda,\mu$ be integer partitions with
at most $\ell$ parts and such that $\lambda/\mu$ is a skew shape.
Moreover, let $\nu, \rho, \beta, \eta \in \setZ^\ell$.
Our next goal is to show that the map
$k \mapsto K_{k\lambda+\nu/k\mu+\rho,k\beta+\eta}(\avec,\bvec)$
is a polynomial in $k$ whenever $k$ is sufficiently large.
By \cref{prop:kostkaInContingency},
it then suffices to show that the map
\[
  k \mapsto M^{(\avec,\bvec)}_{k\beta+\eta}[k\alpha+\gamma]
\]
is a polynomial for $k \gg 0$, where $\alpha, \beta \in \setN^\ell$
and $\gamma,\eta \in \setZ^\ell$.

In order to show this, we need some facts about contingency tables.
Let $\uvec = (u_1,\dotsc,u_m)$ and $\vvec = (v_1,\dotsc,v_n)$,
and consider the non-negative $m{\times}n$ integer matrices $\xvec = (x_{ij})$
with row sums given by $\uvec$ and column sums given by $\vvec$.
This can be encoded as a system of equations,
\[
  A\xvec = \begin{pmatrix} \uvec \\ \vvec \end{pmatrix}, \qquad x_{ij} \geq 0.
\]
The matrix $A$ is \emph{totally unimodular} (see \cite{DahmenMicchelli1988,Mount1995}) that is,
every minor has determinant in $\{-1,0,1\}$.
Moreover, if we add one or more equations of the form $x_{ij}=0$
as in \cref{def:contingency},
it is easy to see that the corresponding matrix $A$ is still totally unimodular;
adding a row of the form $(0,\dotsc,0,1,0,\dotsc,0)$ to the bottom of a matrix
preserves the property of being unimodular\footnote{Consider the cofactor expansion.}.

For a general matrix $A$, the \defin{partition function of $A$} is defined as
the function
\[
  \defin{\phi_A(y_1,\dotsc,y_m)} \coloneqq \left| \left\{ \xvec \in \setN^s : A \xvec = \yvec \right\} \right|
\]
which simply count the number of non-negative integer solutions to $A \xvec = \yvec$.
In particular, for fixed $\avec$, $\bvec$, $\alpha,\beta,\gamma$, there is
a unimodular matrix $A$ (depending on $\avec$ and $\bvec$) such that
\begin{equation}\label{eq:contingencyAsPartitionFunc}
 M^{(\avec,\bvec)}_{k\beta+\eta}[k\alpha+\gamma] = \phi_A(k\alpha+\gamma, k\beta+\eta).
\end{equation}
\medskip

The following definitions and theorem are due to Sturmfels~\cite{Sturmfels1995}.
Given a $d{\times}n$-matrix $A$ with columns $(a_1,\dotsc,a_n)$,
let $\mathrm{pos}(A)$ be the cone $\{\sum_{j=1}^n a_j y_j \in \setR^n : y_1,\dotsc,y_n \geq 0 \}$.
A subset $S \subseteq \{1,2,\dotsc,n\}$ is a \defin{basis} if the columns indexed by $S$
is a submatrix (denoted $A_S$) of $A$ with same rank as $A$.
The \defin{chamber complex} is the the polyhedral subdivision of the cone $\mathrm{pos}(A)$,
given by the common refinement of all cones $\mathrm{pos}(A_S)$ as $S$ ranges over all bases.
A \defin{chamber} of the chamber complex is a cell with maximum dimension.

The following theorem is a special case of a more general theorem by Sturmfels, see \cite[Thm. 1]{Sturmfels1995}.
\begin{theorem}[Sturmfels, 1995]
Suppose the $d{\times}n$-matrix $A$ is a totally unimodular matrix, and let $\phi_A$ be its partition function.
Then for each chamber $C$, there is a polynomial $P(y_1,\dotsc,y_n)$,
so that $\phi_A(y_1,\dotsc,y_n) = P(y_1,\dotsc,y_n)$.
\end{theorem}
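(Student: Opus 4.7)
The plan is to interpret $\phi_A(\yvec)$ as the number of lattice points in the parametric polytope
\[
P_\yvec \coloneqq \{\xvec \in \setR^n_{\geq 0} : A\xvec = \yvec\}
\]
and to exploit the combinatorial rigidity provided by the chamber decomposition, combined with total unimodularity of $A$, to reduce the lattice-point count to a polynomial in $\yvec$ on each chamber.

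The first step is to analyze the combinatorial type of $P_\yvec$ across a chamber. Fixing a chamber $C$ and $\yvec$ in its relative interior, I would show that the vertices of $P_\yvec$ correspond precisely to those bases $S \subseteq \{1,\dotsc,n\}$ for which $A_S^{-1}\yvec$ has all entries strictly positive, and that this collection of \emph{feasible bases} is independent of the choice of $\yvec \in C$. The vertex $v_S(\yvec)$ associated with such an $S$ has $S$-entries equal to $A_S^{-1}\yvec$ and vanishing $S^c$-entries; in particular $v_S$ depends linearly on $\yvec$, and the tangent cones to $P_\yvec$ at the $v_S(\yvec)$ share a combinatorial type that is constant on $C$. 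This is essentially the defining property of the chamber complex.

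Next I would invoke total unimodularity of $A$: every basic submatrix $A_S$ has determinant $\pm 1$, so $A_S^{-1}$ is integer-valued. Consequently $v_S(\yvec) \in \setZ^n$ whenever $\yvec$ is an integer vector, and each tangent cone is generated by a lattice basis, i.e. is unimodular. With this in hand I would apply Brion's theorem to decompose
\[
\phi_A(\yvec) = \sum_{S} L_S(\yvec),
\]
where $S$ ranges over the feasible bases associated with $C$ and $L_S(\yvec)$ is the lattice-point contribution of the tangent cone at $v_S(\yvec)$. Unimodularity of each tangent cone together with integrality of $v_S(\yvec)$ forces $L_S(\yvec)$ to collapse to a polynomial expression in $y_1,\dotsc,y_n$, built from products of binomial coefficients of linear forms in $\yvec$. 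Summing over $S$ yields the desired polynomial $P(y_1,\dotsc,y_n)$ on the chamber.

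The main obstacle is the regularization step implicit in Brion's formula: individual tangent-cone contributions are singular rational functions at the evaluation point and only their sum is well-defined, which requires the Brion--Vergne deformation argument to control limits. An alternative route, which sidesteps this subtlety, is to appeal directly to parametric Ehrhart theory (Clauss--Loechner, Verdoolaege, Barvinok): on each chamber $C$ the lattice-point count of $P_\yvec$ is \emph{a priori} a quasi-polynomial in $\yvec$, and unimodularity of all tangent cones in the chamber collapses every period to one, leaving a genuine polynomial. Either route depends decisively on the fact that total unimodularity of the global matrix $A$ propagates to every local tangent cone in the vertex decomposition.
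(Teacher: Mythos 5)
The paper does not actually prove this statement: it is quoted verbatim as a special case of Sturmfels' Theorem~1 and used as a black box, so there is no internal proof to compare against. Your sketch is an independent and essentially correct route, closer in spirit to the Brion--Vergne / parametric-Ehrhart proofs than to Sturmfels' original argument (which works with the Dahmen--Micchelli local polynomial pieces of the partition function and observes that, since every basis submatrix has determinant $\pm1$, no nontrivial periods can occur). The facts you assert are all true and standard: for $\yvec$ in the open chamber the feasible bases are exactly those $S$ with $A_S^{-1}\yvec>0$, and this set is constant on the chamber by the definition of the chamber complex; total unimodularity makes each $A_S^{-1}$ integral, so the vertex $v_S(\yvec)$ (equal to $A_S^{-1}\yvec$ on $S$ and $0$ off $S$) is a lattice point for integral $\yvec$ and depends linearly on $\yvec$; and the edge generators $w_j=e_j-\sum_{i\in S}(A_S^{-1}a_j)_i e_i$ for $j\notin S$ form a $\setZ$-basis of $\ker A\cap\setZ^n$, so every tangent cone is unimodular. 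Given unimodular tangent cones at lattice vertices that move linearly with $\yvec$, either Brion's formula with the usual $z\to\mathbf{1}$ regularization, or the a priori quasi-polynomiality of parametric lattice-point counts with all periods collapsing to $1$, yields a single polynomial on the chamber. What you would still need to write out carefully is (i) the lattice-basis claim for the tangent-cone generators, which is the precise point where total unimodularity enters, and (ii) the cancellation of the polar parts in the $t\to 0$ limit if you take the Brion route (or a citation for piecewise quasi-polynomiality if you take the Ehrhart route); neither is a conceptual gap, and your acknowledged fallback to parametric Ehrhart theory is a legitimate way to close the argument.
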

Hence, for $k$ sufficiently large and $A$ unimodular, $\phi_A(k\alpha+\gamma, k\beta+\eta)$
is a polynomial in $k$, since the ray $k\alpha+\gamma, k\beta+\eta$ eventually remains
in the same chamber as $k$ grows (or it leaves the cone $\mathrm{pos}(A)$ entirely,
in which case we get the constant zero polynomial).

\begin{corollary}\label{cor:flaggedKostkaPolynomial}
The map $K_{k\lambda+\nu/k\mu+\rho,k\beta+\eta}(\avec,\bvec)$
is polynomial in $k$ for $k$ sufficiently large.
Furthermore, the map $k \mapsto K_{k\lambda/k\mu,k\beta}(\avec,\bvec)$ is a polynomial in $k$
whenever $k\geq 0$.
\end{corollary}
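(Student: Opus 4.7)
The plan is to combine \cref{prop:kostkaInContingency} with Sturmfels' theorem on the partition function of a totally unimodular matrix. By \cref{prop:kostkaInContingency}, we may write
\[
K_{k\lambda+\nu/k\mu+\rho,\,k\beta+\eta}(\avec,\bvec) = \sum_{\sigma \in \symS_n} \varepsilon(\sigma)\, M^{(\sigma(\avec),\bvec)}_{k\beta+\eta}\bigl[\sigma(k\lambda+\nu+\delta) - (k\mu+\rho+\delta)\bigr].
\]
Since a finite sum of polynomials is a polynomial, it suffices to show each term on the right is polynomial in $k$ for $k$ sufficiently large.

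For each $\sigma$, by \eqref{eq:contingencyAsPartitionFunc} and the remark that adding rows of the form $e_{ij}^{\top}$ preserves total unimodularity, there exists a totally unimodular matrix $A_\sigma$ (depending on $\sigma(\avec)$ and $\bvec$) such that
\[
M^{(\sigma(\avec),\bvec)}_{k\beta+\eta}\bigl[\sigma(k\lambda+\nu+\delta) - (k\mu+\rho+\delta)\bigr] = \phi_{A_\sigma}\bigl(k\, \xvec_\sigma + \yvec_\sigma\bigr),
\]
where $\xvec_\sigma$ packages the $k$-linear parts $\sigma(\lambda) - \mu$ and $\beta$, while $\yvec_\sigma$ packages the constant parts $\sigma(\nu+\delta)-\rho-\delta$ and $\eta$. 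By Sturmfels' theorem, $\phi_{A_\sigma}$ is a polynomial on each chamber of the chamber complex of $A_\sigma$. The ray $k \mapsto k\xvec_\sigma + \yvec_\sigma$ is eventually, for $k$ large, either contained in a fixed chamber of $A_\sigma$ or lies entirely outside $\mathrm{pos}(A_\sigma)$. In the former case, $\phi_{A_\sigma}(k\xvec_\sigma + \yvec_\sigma)$ coincides with a polynomial in $k$; in the latter case, it is identically zero. Either way, each summand is polynomial in $k$ for $k \gg 0$, proving the first claim.

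For the second claim with $\nu = \rho = \eta = 0$, each ray reduces to $k \mapsto k\xvec_\sigma$, which for every $k > 0$ lies in the same (closed) chamber (or is always outside the cone). Hence the polynomial identity holds for all $k \geq 1$. It remains to verify the $k=0$ case. When $k=0$, the shape $0\lambda/0\mu$ is empty and the only tableau is the empty one, so $K_{0/0,0}(\avec,\bvec)=1$, which equals the value of the (constant) polynomial evaluated at $k=0$, since the only contributing permutation is the identity.

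The main technical point is verifying that the matrix $A_\sigma$ remains totally unimodular after imposing the flag-induced equations $x_{ij}=0$ for $i \notin \{a_j,\dotsc,b_j\}$; this is straightforward from cofactor expansion, as already noted in the excerpt. The rest is a direct application of Sturmfels' theorem. The mildly delicate point is distinguishing ``for $k$ sufficiently large'' from ``for all $k \geq 0$'': it is precisely the absence of the constant shifts $\nu,\rho,\eta$ that guarantees each ray emanates from the origin, keeping it in one chamber for every positive $k$.
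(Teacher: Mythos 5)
Your proof of the first statement is correct and is essentially the paper's argument: \cref{prop:kostkaInContingency} plus \eqref{eq:contingencyAsPartitionFunc} plus Sturmfels' theorem, with the ray eventually trapped in one chamber (or leaving the cone).

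The second statement, however, has a genuine gap. You claim that when $\nu=\rho=\eta=0$ ``each ray reduces to $k\mapsto k\xvec_\sigma$,'' but by your own bookkeeping $\yvec_\sigma$ packages $\sigma(\nu+\delta)-\rho-\delta$, which for $\nu=\rho=0$ equals $\sigma(\delta)-\delta$. This is nonzero for every $\sigma\neq\mathrm{id}$, because the Jacobi--Trudi determinant shifts the margins by the staircase $\delta$. So for $\sigma\neq\mathrm{id}$ the argument of $\phi_{A_\sigma}$ traces an \emph{affine} ray not through the origin, and such a ray can cross walls of the chamber complex for small $k$; the individual terms $M^{(\sigma(\avec),\bvec)}_{k\beta}[\sigma(k\lambda+\delta)-(k\mu+\delta)]$ therefore need not agree with their eventual polynomial at small positive $k$. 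The example immediately following the corollary in the paper (values $0,1,3,5,5,\dotsc$) shows exactly this failure mode for a single contingency-table count with a nonzero constant shift. Your separate check at $k=0$ does not repair this, since the problem is at the intermediate values $k=1,2,\dotsc$ before the threshold. The paper closes this gap by a different mechanism: $K_{k\lambda/k\mu,k\beta}(\avec,\bvec)$ is the Ehrhart counting function of the $k$-th dilate of the rational polytope $\mathcal{P}(\lambda/\mu,\beta,\avec,\bvec)$, hence a quasipolynomial in $k$ for all $k\geq 0$; a quasipolynomial that agrees with a polynomial for all sufficiently large $k$ agrees with it on every residue class at infinitely many points, hence everywhere. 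You need this (or some argument showing that the wall-crossing discrepancies cancel in the signed sum over $\sigma$) to get polynomiality down to $k=0$.
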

\begin{proof}
The first statement follows directly from combining \cref{prop:kostkaInContingency}
with \eqref{eq:contingencyAsPartitionFunc}.

For the second statement, we need to use the fact that $K_{k\lambda/k\mu,k\beta}(\avec,\bvec)$
is a quasipolynomial in $k$ since it is the Ehrhart polynomial of face of a Gelfand--Tsetlin polytope.
These two facts together gives polynomiality for all non-negative $k$.
\end{proof}

\begin{example}
Note that the map $k \mapsto M^{(\avec,\bvec)}_{k\beta}[k\alpha +\gamma]$ in \eqref{eq:contingencyAsPartitionFunc}
is indeed only a polynomial for sufficiently large $k$.
For $\avec=(1,1)$, $\bvec=(2,2)$, $\alpha =(2,0)$, $\beta=(1,1)$ and $\gamma =(-4,4)$,
the values of $M^{(\avec,\bvec)}_{k\beta}[k\alpha +\gamma]$ as $k=1,2,3,4,\dotsc$
are $0,1,3,5,5,\dotsc$.
The five contingency tables for $k \geq 4$ are of the form
\[
\ytableausetup{boxsize=2.5em}
 \begin{ytableau}
 \scriptstyle{k-j}  & \scriptstyle{j} \\
 \scriptstyle{k-4+j} & \scriptstyle{4-j} \\
\end{ytableau}
\qquad j=0,1,2,3,4, \quad k\geq 4.
\]
\end{example}

\begin{theorem}[B. Kostant, see also \cite{Shrivastava2022x}]
Let $\lambda,\mu,\nu$ be partitions of length at most $\ell$,
such that $|\nu|=|\lambda|+|\mu|$. Then
\begin{equation}
c^{\nu}_{\lambda \mu} = \sum_{\sigma \in \symS_\ell} \varepsilon(\sigma) K_{\mu, \sigma(\nu+\rho)-(\lambda+\rho)},
\end{equation}
where $\rho = \tfrac12 (\ell-1,\ell-3,\dotsc,1-\ell)$.
\end{theorem}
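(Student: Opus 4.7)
The plan is to derive this identity from the ratio-of-alternants presentation of Schur polynomials together with the monomial expansion $\schurS_\mu(\xvec) = \sum_\beta K_{\mu\beta}\,\xvec^\beta$. Working with $\ell$ variables, let $\delta = (\ell-1, \ell-2, \dotsc, 0)$ and $a_\gamma(\xvec) = \sum_{\sigma\in\symS_\ell} \varepsilon(\sigma)\, \xvec^{\sigma(\gamma)}$, where $\sigma$ permutes the entries of a tuple via $(\sigma(\gamma))_i = \gamma_{\sigma^{-1}(i)}$. The Weyl character formula gives $\schurS_\lambda = a_{\lambda+\delta}/a_\delta$. Multiplying the defining identity $\schurS_\lambda \schurS_\mu = \sum_\nu c^\nu_{\lambda\mu}\, \schurS_\nu$ by $a_\delta$ then yields
\begin{equation*}
a_{\lambda+\delta}\cdot\schurS_\mu \;=\; \sum_\nu c^\nu_{\lambda\mu}\, a_{\nu+\delta}.
\end{equation*}

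The next step is to extract the coefficient of $\xvec^{\nu+\delta}$ on both sides. Because $\nu+\delta$ is strictly decreasing, on the right only $a_{\nu+\delta}$ can contribute this monomial, and it does so through the identity permutation; hence the coefficient is exactly $c^\nu_{\lambda\mu}$. On the left, inserting $a_{\lambda+\delta}=\sum_\sigma \varepsilon(\sigma)\,\xvec^{\sigma(\lambda+\delta)}$ and $\schurS_\mu = \sum_\beta K_{\mu\beta}\,\xvec^\beta$ and matching exponents $\sigma(\lambda+\delta)+\beta = \nu+\delta$ produces
\begin{equation*}
c^\nu_{\lambda\mu} \;=\; \sum_{\sigma\in\symS_\ell} \varepsilon(\sigma)\, K_{\mu,\,\nu+\delta-\sigma(\lambda+\delta)},
\end{equation*}
under the standard convention that $K_{\mu,\gamma}=0$ whenever $\gamma$ has a negative entry.

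Finally I will massage this into the stated form using two facts. First, classical Kostka numbers satisfy $K_{\mu,\gamma}=K_{\mu,\tau(\gamma)}$ for every $\tau\in\symS_\ell$; substituting $\sigma\mapsto\tau^{-1}$ (which preserves signs) and then applying $\tau$ to the weight argument rewrites each summand as $K_{\mu,\,\tau(\nu+\delta)-(\lambda+\delta)}$. Second, the vectors $\delta$ and $\rho=\tfrac12(\ell-1,\ell-3,\dotsc,1-\ell)$ differ by the constant vector $\tfrac{\ell-1}{2}(1,\dotsc,1)$, which is fixed by every permutation; therefore $\tau(\nu+\delta)-(\lambda+\delta)=\tau(\nu+\rho)-(\lambda+\rho)$, producing the claimed expression. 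The main obstacle is purely notational: committing once and for all to a convention for the action of $\symS_\ell$ on tuples and carefully tracking signs when reindexing by $\sigma\mapsto\tau^{-1}$; no new combinatorial input is required, since the identity is really an algebraic consequence of the Weyl character formula combined with the Kostka expansion of $\schurS_\mu$.
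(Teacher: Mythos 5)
The paper does not prove this statement: it is quoted as a classical theorem of Kostant with a reference, and is then only \emph{used} (combined with \cref{prop:kostkaInContingency}) to express Littlewood--Richardson coefficients via contingency tables. So there is no in-paper proof to compare against. Your argument is the standard derivation of Kostant's formula and it is correct. Multiplying $\schurS_\lambda\schurS_\mu=\sum_\kappa c^{\kappa}_{\lambda\mu}\schurS_\kappa$ in $\ell$ variables by the Vandermonde alternant $a_\delta$ and extracting the coefficient of $\xvec^{\nu+\delta}$ is legitimate: on the right only $\kappa=\nu$ and $\sigma=\id$ can contribute because $\kappa+\delta$ and $\nu+\delta$ are strictly decreasing with distinct entries, and the truncation to $\ell$ variables only discards terms with $\ell(\kappa)>\ell$, which are excluded by hypothesis. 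The two clean-up steps are also sound: $K_{\mu,\gamma}$ is invariant under permuting $\gamma$ (and vanishes whenever $\gamma$ has a negative entry, a property preserved by permutation), which justifies the reindexing $\sigma\mapsto\tau^{-1}$; and $\delta-\rho=\tfrac{\ell-1}{2}(1,\dotsc,1)$ is fixed by every permutation, so replacing $\delta$ by $\rho$ inside $\tau(\nu+\cdot)-(\lambda+\cdot)$ changes nothing. No gaps.
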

Combining this with the formula above, we get the following expression for
the Littlewood--Richardson coefficients in terms of contingency table counts:
\begin{equation}
c^{\nu}_{\lambda \mu} =
\sum_{\substack{\sigma \in \symS_\ell \\ \tau \in \symS_\ell}}
\varepsilon(\sigma \tau)
M_{\sigma(\nu+\rho)-(\lambda+\rho)}[\tau(\mu+\delta)-\delta].
\end{equation}
With the same argument as above, we see that the map $k \mapsto c^{k\nu}_{k\lambda,k\mu}$
is a polynomial in $k$ for large $k.$
It then follows from a result by Berenstein and Zelevinsky
that this map is a quasipolynomial, since it is the Ehrhart function
of a rational polytope. These facts together shows that the map is
a polynomial for all $k \geq 0$.

\begin{remark}
It is tempting to see if one can show polynomiality in a
setting which generalizes the flag condition.
One can define a Schur-like polynomials where we sum over semistandard Young tableaux
of shape $\lambda/\mu$, but entries in row $j$ must be in some fixed set $S_j$.
For example, take $\lambda/\mu = (4,3)/(2)$, $S_1 = \{1,3,4,5\}$, $S_2 = \{1,2,3,4,5\}$.
Then the number of such SSYT of shape $k \lambda / k \mu$ and weight $k(2,1,1,1)$
is given by
\[
  3, 7, 12, 19, 27, 37, 48,61, \dotsc, \text{ for } k=1,2,\dotsc,
\]
which is only a quasipolynomial. If it were a polynomial, it must be bounded by
the corresponding classical Kostka coefficients, which in this case are given
by the $3$rd degree polynomial $k \mapsto \frac{1}{3} \left(2 k^3+6 k^2+7 k+3\right)$,
but the data above does not fit a polynomial of degree at most $3$.

The reason that the above method fails is that there is no clear analog of the Jacobi--Trudi identity for
these more generalized conditions on row entries.
\end{remark}

\subsection{Polynomiality for cylindric Kostka coefficients}

Let $D$ be a cylindric diagram and $\schurS_D(x_1,\dotsc,x_n)$
be the cylindric Schur function restricted to $n$ variables.
Let us designate one of the columns in the diagram to be the first column.
We can then cut the diagram immediately to the left of the first
column and unfold the diagram to obtain some skew shape
$\lambda/\mu$.
Any semi-standard tableau of shape $\lambda/\mu$ is also
a proper cylindric semi-standard tableau if the last
column of the SSYT is compatible with the first column.
Since columns are increasing, a column is
uniquely determined by its entries.
Suppose $\nu_1,\nu_2,\dotsc,\dotsc,\nu_m \in \{0,1\}^n$
are indicator vectors for the possible contents in the first column
of $\lambda/\mu$.
We can then express the cylindric Schur polynomial
as a sum over flagged Schur polynomials as follows,
where each term corresponds to a fixed filling of the first column:
\begin{equation}\label{eq:cylindricAsFlagSum}
 \schurS_{D}(x_1,\dotsc,x_n) =
 \sum_{i=1}^m
 \xvec^{\nu_i} \cdot \schurS_{\lambda/(\mu+1^\ell)}(\avec_i,\bvec_i).
\end{equation}
The shape $\lambda/(\mu+1^\ell)$ is the diagram with the first
column removed, $\avec_i$ is the flag that ensures
that entries in row $i$ are at least as large as the $i$th
entry in the first column, and $\bvec_i$
is the flag that ensures that if we wrap around
the last column of $\lambda/\mu$, the entries are cylindrically
compatible with the fixed first column.

\begin{example}
Suppose $D$ is the diagram below and we
want to compute $\schurS_{D}(x_1,\dotsc,x_5)$
using the formula in \eqref{eq:cylindricAsFlagSum}.
\[
\ytableausetup{boxsize=1em}
D=
\ytableaushort{
{\cdot},
{\cdot},
{\,}{\,}{\,},
{\none}{\none}{\,}{\,}{\none[\cdot]},
{\none}{\none}{\,}{\,}{\none[\cdot]}
}
\]
There are $\binom{5}{3}=10$ choices of entries
in the first (leftmost) column.
If we for example fill the first column with the entries in $\{1,2,3\}$ we get
\[
\ytableaushort{
{3},
{2},
{1}{\,}{\,},
{\none}{\none}{\,}{\,}{\none[\leq]}{\none[3]},
{\none}{\none}{\,}{\,}{\none[\leq]}{\none[2]}
}
\]
The 10 choices leads to the 10 flagged Schur functions with flags as indicated below
where $\{a<b<c\} \subset \{1,2,3,4,5\}$:
\[
\ytableaushort{
{\none[a]}{\none[\leq]}{\,}{\,},
{\none}{\none}{\none}{\,}{\,}{\none[\leq]}{\none[c]},
{\none}{\none}{\none}{\,}{\,}{\none[\leq]}{\none[b]}
}
\]
To be precise, the flags for each choice of first column are $\avec = (a,1,1)$ and $\bvec = (b,c,5)$,
as we put the value $1$ in $\avec$ for rows unaffected by the choice of enties in the first column of $D$,
and similarly for $\bvec$.
\end{example}

\begin{corollary}\label{cor:cylindricKostkaPolynomial}
The cylindric Kostka coefficients can
be expressed as the following sum of flagged skew Schur Kostka coefficients:
\[
 [\xvec^\beta] \schurS_{D} = \sum_{i=1}^m
 K_{\lambda/(\mu+1^\ell),\beta - \nu}(\avec_i,\bvec_i).
\]
Moreover, the map $k \mapsto [\xvec^{k\beta}] \schurS_{k D}$
is a polynomial in $k$.
\end{corollary}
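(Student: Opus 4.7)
The plan is to derive both parts of \cref{cor:cylindricKostkaPolynomial} from the flagged decomposition \eqref{eq:cylindricAsFlagSum} combined with \cref{cor:flaggedKostkaPolynomial}. For the first identity, I would simply extract the coefficient of $\xvec^\beta$ on both sides of \eqref{eq:cylindricAsFlagSum}: on the left this gives $[\xvec^\beta]\schurS_D$, while on the right each summand $\xvec^{\nu_i}\,\schurS_{\lambda/(\mu+1^\ell)}(\avec_i,\bvec_i)$ contributes $K_{\lambda/(\mu+1^\ell),\,\beta-\nu_i}(\avec_i,\bvec_i)$ by definition of the flagged Kostka coefficient.

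For the polynomiality claim, I would apply the same decomposition to $kD$ in place of $D$. The key observation is that the operation $D\mapsto kD$ subdivides every box of $D$ horizontally into $k$ pieces, but leaves the first column of the cylindric diagram intact; hence the set of admissible first-column fillings $\{\nu_i\}_{i=1}^m$ and the associated row-flags $\avec_i,\bvec_i$ are independent of $k$, while the unfolded skew shape scales from $\lambda/\mu$ to $k\lambda/k\mu$. This yields
\[
[\xvec^{k\beta}]\schurS_{kD} \;=\; \sum_{i=1}^m K_{k\lambda/(k\mu+1^\ell),\,k\beta-\nu_i}(\avec_i,\bvec_i).
\]
Each summand fits into the template $K_{k\lambda+\nu'/k\mu+\rho',\,k\beta+\eta'}(\avec_i,\bvec_i)$ with $\nu'=0$, $\rho'=1^\ell$, $\eta'=-\nu_i$, so by the first part of \cref{cor:flaggedKostkaPolynomial} each is polynomial in $k$ for all sufficiently large $k$, and so is the finite sum.

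To upgrade ``polynomial eventually'' to ``polynomial for all $k\geq 0$'' I would invoke the cylindric Gelfand--Tsetlin polytope description: the defining (in)equalities of $\mathcal{P}(D,\beta)$ listed immediately before this subsection (involving $\lambda_j$, $\mu_j$, $\beta_i$, and the cylindric shift $x_{im}+\ell\ge x_{i1}$) all scale linearly under $(\lambda,\mu,\ell,\beta)\mapsto(k\lambda,k\mu,k\ell,k\beta)$, so $\mathcal{P}(kD,k\beta)=k\cdot\mathcal{P}(D,\beta)$. Ehrhart's theorem then forces $k\mapsto[\xvec^{k\beta}]\schurS_{kD}$ to be a quasipolynomial; and a quasipolynomial that agrees with a polynomial for all $k\gg 0$ must already be that polynomial for all $k\geq 0$, because each constituent polynomial of the quasipolynomial then agrees with the eventual polynomial on an infinite arithmetic progression.

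The main obstacle I anticipate is verifying carefully the two structural claims used above: (i) that the first-column fillings $\nu_i$ and the associated flags $\avec_i,\bvec_i$ in \eqref{eq:cylindricAsFlagSum} genuinely do not depend on $k$---in particular, that the cylindric-compatibility constraint which determines $\bvec_i$ by wrapping the last column back around to match the first is unaffected by horizontal subdivision---and (ii) that the cylindric Gelfand--Tsetlin polytope dilates linearly under $D\mapsto kD$. Both should follow from direct inspection of the constructions, after which the result is a straightforward combination of \cref{cor:flaggedKostkaPolynomial} with Ehrhart's theorem.
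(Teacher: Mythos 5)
Your proposal is correct and follows essentially the same route as the paper: extract the coefficient of $\xvec^\beta$ from \eqref{eq:cylindricAsFlagSum}, observe that the first-column fillings $\nu_i$ and flags $\avec_i,\bvec_i$ are independent of $k$ so that $[\xvec^{k\beta}]\schurS_{kD}=\sum_i K_{k\lambda/(k\mu+1^\ell),k\beta-\nu_i}(\avec_i,\bvec_i)$ is a fixed finite sum of terms that are eventually polynomial by \cref{cor:flaggedKostkaPolynomial}, and then use the Ehrhart quasipolynomiality of $\mathcal{P}(D,\beta)$ to conclude polynomiality for all $k\geq 0$. Your extra care in spelling out why a quasipolynomial that eventually agrees with a polynomial must equal it everywhere, and in checking that the flag data does not change under horizontal subdivision, only makes explicit what the paper leaves implicit.
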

\begin{proof}
Note that for any $k \geq 1$, we have that
\[
 [\xvec^{k\beta}] \schurS_{kD} = \sum_{i=1}^m
 K_{k\lambda/(k\mu+1^\ell),k\beta - \nu_i}(\avec_i,\bvec_i),
\]
so the number of terms in the sum remains constant as $k$ increases.
We know from \cref{cor:flaggedKostkaPolynomial}
that each term in the sum is polynomial in $k$ for $k \gg 0$.
The fact that we also can model $[\xvec^{k\beta}] \schurS_{kD}$
as the Ehrhart quasipolynomial of the
rational polytope $\mathcal{P}(D,\beta)$
defined earlier implies that the map must be a polynomial for all $k \geq 0$.
\end{proof}

\begin{conjecture}[Cylindric extension of the King--Tollu--Toumazet conjecture, \cite{KingTolluToumazet2004}]
For any cylindric shape $D$ and weight $\beta$, the map
\[
 k \mapsto |\CSSYT(k D, k \beta)|
\]
is a polynomial in $\setN[k]$.
Note that this is the Ehrhart polynomial of the
cylindric Gelfand--Tsetlin polytope $\mathcal{P}(D,\beta)$.
\end{conjecture}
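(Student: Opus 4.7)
The plan is a two-step reduction followed by an attack on positivity. For the first step, \eqref{eq:cylindricAsFlagSum} gives
\[
 [\xvec^{k\beta}] \schurS_{kD} = \sum_{i=1}^m K_{k\lambda/(k\mu+1^\ell),k\beta - \nu_i}(\avec_i,\bvec_i),
\]
a \emph{positive} sum of stretched flagged skew Kostka coefficients whose number of summands is independent of $k$. Since $\setN[k]$ is closed under addition, the cylindric conjecture for $D$ would follow from the flagged analog of the King--Tollu--Toumazet conjecture: that $k \mapsto K_{k\lambda/k\mu,k\beta}(\avec,\bvec)$ lies in $\setN[k]$. This reduction already eliminates the purely cylindric aspect of the problem and isolates the genuinely hard piece as a statement about flagged Ehrhart polynomials of faces of Gelfand--Tsetlin polytopes.

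For the flagged case, I would attempt a combinatorial reorganisation of the signed sum in \cref{prop:kostkaInContingency}. The goal is to cancel the signs and exhibit $K_{k\lambda/k\mu,k\beta}(\avec,\bvec)$ as a sum, indexed by some $k$-independent combinatorial set, of products of terms of the form $\binom{f(k)+c}{c}$ with $f \in \setN[k]$ and $c \in \setN$. Each such term manifestly lies in $\setN[k]$, so the full sum would too. Concretely, the chamber-polynomial description of the contingency-table count $\phi_A(k\alpha+\gamma, k\beta+\eta)$ in each Sturmfels chamber is a product of binomials indexed by the unimodular basis of the chamber; the difficulty is that Jacobi--Trudi introduces alternating signs across chambers, and one needs a cancellation scheme analogous to the one in the carton/jeu-de-taquin proofs of Kostka polynomiality.

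An alternative, more speculative route is representation-theoretic. Postnikov conjectured that toric Schur polynomials are Frobenius characteristics of toric Specht modules; if one could construct these modules together with a natural $\setN$-grading whose Hilbert series equals the stretched cylindric Kostka coefficient, then positivity in $\setN[k]$ would follow automatically. Such a construction appears to require either an explicit realisation of the toric Specht modules or a categorification of the cylindric Murnaghan--Nakayama rule \cref{thm:cylindricMNRule}, neither of which currently seems within reach.

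The main obstacle is that the conjecture strictly extends the classical King--Tollu--Toumazet conjecture \cite{KingTolluToumazet2004}, which has resisted all combinatorial, polyhedral, and representation-theoretic attacks for twenty years; polynomiality alone, as in \cref{cor:cylindricKostkaPolynomial}, gives no control on the signs of individual coefficients. Moreover, standard Ehrhart-theoretic tools offer less than one might hope: a regular unimodular triangulation of $\mathcal{P}(\lambda/\mu,\nu,\avec,\bvec)$ would yield $h^*$-positivity, but $h^*$-positivity in the binomial basis is strictly weaker than coefficient positivity in the monomial basis of $k$, and the polytope itself is in general not integral by \cite{DeLoeraMcAllister2004}, so unimodularity is not automatic. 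A genuinely new idea--probably one that exploits the cylindric structure rather than reducing to it--is likely needed.
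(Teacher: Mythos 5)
The statement you are addressing is stated in the paper as a \emph{conjecture}, and the paper offers no proof of it; the only thing the paper establishes in its vicinity is \cref{cor:cylindricKostkaPolynomial}, namely that $k \mapsto [\xvec^{k\beta}]\schurS_{kD}$ is a polynomial, with no control on the signs of its coefficients. Your proposal does not close this gap: your first step is a genuine reduction, but your second step is an unexecuted research program, and you say so yourself. Concretely, the reduction via \eqref{eq:cylindricAsFlagSum} is sound and is in fact the same device the paper uses in the proof of \cref{cor:cylindricKostkaPolynomial} --- the number of summands is $k$-independent and each summand is a nonnegative integer, so $\setN[k]$-membership of the cylindric count would indeed follow from $\setN[k]$-membership of each flagged summand. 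One correction, though: the summands are $K_{k\lambda/(k\mu+1^\ell),\,k\beta-\nu_i}(\avec_i,\bvec_i)$, i.e.\ the stretching is along an affine ray $k\mu+1^\ell$, $k\beta-\nu_i$ rather than the pure diagonal $k\mu$, $k\beta$. So the flagged analogue you would need is the \emph{shifted} positivity statement for $K_{k\lambda+\nu/k\mu+\rho,\,k\beta+\eta}(\avec,\bvec)$, which is strictly stronger than the diagonal flagged King--Tollu--Toumazet statement you cite as the target; for such affine rays one does not even get polynomiality for all $k\ge 0$ from the paper's methods (only for $k\gg 0$, cf.\ the first part of \cref{cor:flaggedKostkaPolynomial} and the example following it), let alone nonnegativity of coefficients.

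The core missing idea is therefore exactly where you locate it: a cancellation-free or manifestly positive formula for the stretched flagged Kostka polynomial. Neither of your two suggested routes supplies one. The Sturmfels chamber decomposition gives each $\phi_A$ as a single polynomial per chamber, but the Jacobi--Trudi signs in \cref{prop:kostkaInContingency} mix chambers with alternating signs and no pairing is proposed; and the toric Specht module route presupposes a construction (and an $\setN$-grading with the right Hilbert series) that is not known to exist. Your remark that a regular unimodular triangulation would only give $h^*$-positivity, which does not imply coefficient positivity in the monomial basis of $k$, is correct and is a fair diagnosis of why the polyhedral toolkit falls short. In summary: your reduction is a legitimate and useful observation (essentially already implicit in the paper), but the statement remains a conjecture after your proposal, and it should not be presented as admitting a proof along these lines without a new idea for the positivity step.
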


\begin{example}
Let $D$ be the cylindric shape
as in \cref{ex:cylindricGT} and let $\beta = 2211$
as before. The sequence $|\CSSYT(k D, k \beta)|$ for $k=0,1,2,\dotsc$ is then $1, 3, 6, 10, 15, 21,\dotsc$
which is the polynomial $k \mapsto \frac12(k+2)(k+1)$.
\end{example}

\section{Appendix}

\subsection{Proof of Proposition~\ref{prop:stackedRibbon}}

The goal is to show that for a given cylindric diagram $D$ with strict edges $E$,
\begin{equation}\label{eq:mnruleAppendix}
 \sum_{S\subseteq E}(-1)^{|S|}m_S(P) = 
 \begin{cases}
  (-1)^{\height(D)}  &\text{ if $D$ is a non-pure stacked ribbon,} \\
  \width(D) \cdot (-1)^{\height(D)}  &\text{ if $D$ is a pure stacked ribbon,} \\
  0  &\text{ otherwise.} \\
 \end{cases}
\end{equation}

As described before, we decompose $D$ into ribbons $D =   L_1 \cup L_2 \cup \dotsb \cup L_\ell \cup F$,
where the $L_j$ are loop ribbons and $F$ is either a loop ribbon or a skew shape.
Denote the elements in the minimal equivalence class
determined by $S\subseteq E$ is denoted by \defin{$M_S$} (if this minimum is unique).
We first cover the case $\ell=0$ where we only need to consider a diagram $F$.

\medskip
\noindent
\textbf{Case when $\ell=0$:}
The diagram $F$ can be either disconnected, contain a $2{\times}2$-box,
be a ribbon or be a loop ribbon.

\begin{itemize}
 \item If $F$ is disconnected, then the left hand side of \eqref{eq:mnruleAppendix} is always zero.
 
 \item If $F$ contains a $2{\times}2$-box $\ytableaushort{cd,ab}$,
 then $b$ and $d$ are never in $M_S$. Hence,
 \[
   \sum_{\substack{S\subseteq E \\ bd \in S}}(-1)^{|S|}m_S(P) \text{ and }
   \sum_{\substack{S\subseteq E \\ bd \notin S}}(-1)^{|S|}m_S(P)
 \]
are equal but with opposite signs, so again \eqref{eq:mnruleAppendix} is 0.

\item If $F$ is a ribbon then the first box in
the leftmost column is a minimum
so this must be in every $M_S$ contributing to the sum. 
In order to have a unique minimum, $S$ must include all edges in $E$
which are not in the first column of $F$.
For example, all edges between boxes marked $\circ$ must be in $S$:
\[
F \quad = \quad
 \ytableaushort{
	{\diamond},
	{\diamond},
	{\diamond},
{\ast}\;{\circ},
	{\none}{\none}{\circ},
	{\none}{\none}{\circ},
	{\none}{\none}{\circ}\;{\circ},
    {\none}{\none}{\none}{\none}{\circ}\;\;
	}
\]
If the penultimate box marked $\diamond$ in the first column is \emph{not}
in $M_S$, we can toggle the edge between the two last boxes. 
Hence, we may sum only over those $S$ in \eqref{eq:mnruleAppendix},
where the penultimate box in the first column \emph{is} a member in $M_S$
(and edges in all other columns must be in $S$).
This leads to only two terms left with sum $(-1)^{\height(F)}$.

\item If $F$ is a loop ribbon, let us consider the
possible ways for $S$ to produce a unique minimum.
If $S=E$, then all boxes are part of $M_S$.
Otherwise, $M_S$ consists of the first few boxes in some column
containing at least two boxes whose strict edges are not all present in $F$
and the remaining strict edges in all other columns must be present in $S$.
In other words, we only need to consider $S$ where the strict edges in $E\setminus S$
are in the same column.
Consider now the highest
$e \in E\setminus S$.
If by adding $e$ to $S$ does not affect the minimum,
we have a sign-reversing involution between the terms from
$S$ and $S \cup \{e\}$.
What remains are the cases where adding $e$ alters the minimum
and this can only happen when $E\setminus S = \{e\}$.
In such a case, note that $m_S$ is the number of
strict edges in that column and that $|S| = |E|-1$.

Adding these for all columns gives a total of
\begin{align*}
(-1)^{|E|}\cdot |F| + (-1)^{|E|-1}\cdot |E| &= (-1)^{|E|}(|F|-|E|) \\
&= (-1)^{\height(F)}\cdot \width(F),
\end{align*}
where the last identity is due to \eqref{eq:loopWidthRel}.
\end{itemize}

What remains is now the case where $\ell \geq 1$.
We will show via a series of sign-reversing involutions that there are
only a few selections $S \subseteq E$ that we need to consider.
Each such involution is of the form above,
where we \emph{toggle} the membership of some strict edge in $S$,
in such a way that $M_S$ is unchanged. 
We start at the bottom of the diagram and work our way up.

\medskip
\noindent
\textbf{Case when $\ell \geq 1$:}
In the illustrations, the edges included in $S$ are shown in black,
the edges not in $S$ are shown in white,
and edges where a choice has not yet been made (or are
part of a sign-reversing involution argument)
are shown in black and white.

\medskip
\noindent
\emph{Restriction 1---All strict edges in $L_1$ in $S$:}

Let us start from the bottom of the shape.
Since $F$ is non-empty, there is at least one
strict edge $\{a,b\} \in E$
such that $a \in L_1$ but $b \notin L_1$ (see R1.a).
If $a \notin M_S$, $\{a,b\}$ can be toggled
so we may assume $a \in M_S$. Note that as $a$ can not be a minimal box on $L_1$ as the box $b$ above it is not on $L_1$. That means, the only way it can be in $M_S$ is for all the boxes in $L_1$ to be in $M_S$. This implies that all strict edges in $L_1$ must be be in $S$ (see R1.b).

\medskip
\noindent
\emph{Restriction 2---All strict edges connecting $L_1$
to non-corners of $L_2$ (or $F$) are not in $S$:}

Take an edge $e$ connecting a box in $L_1$ to
a non-corner box in $L_2$.
The edge $e'$ to the left of $e$, also connects
a box in $L_1$ to a box in $L_2$ (see R2.a below).
If $e \in S$, then we can toggle $e'$.
Hence, we may assume $e \notin S$.
The same argument applies to any such $e$ (R2.b).

\begin{figure}[!ht]
\centering
\begin{align*}
&
\includegraphics[page=1,width=0.2\linewidth]{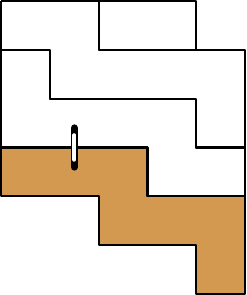}
&&
\includegraphics[page=2,width=0.2\linewidth]{toggleProofPics}
&&
\includegraphics[page=3,width=0.2\linewidth]{toggleProofPics}
&&
\includegraphics[page=4,width=0.2\linewidth]{toggleProofPics} 
\\
&
\text{R1.a} &&
\text{R1.b} &&
\text{R2.a} &&
\text{R2.b} \\
\end{align*}
\end{figure}

\medskip
\noindent
\emph{Restriction 3---For all $i\leq \ell$, all strict edges in $L_i$ are in $S$,
and from strict edges connecting $L_i$ and $L_{i-1}$ exactly one is in $S$:}

We will assume $\ell$ is at least $2$ and show that these assumptions do
not change the total for $i=2$. The rest follows inductively.
As $F\leq \varnothing$, as in R1.a, we can find a box directly above
a non-minimal box in $L_2$. Note that the edge $e_B$ connecting that
to a box $B$ in $L_2$ can be toggled unless $B$ is part of the unique minimum.
Let $C\neq B$ be the minimal box in $L_2$ on the same row as $B$,
and let $e_C$ be the edge connecting $C$ to a box in $L_2$.
If adding or removing $e_C$ in a configuration does not change
the unique minimum, $e_C$ can be toggled.
So we will restrict our attention to configurations where
adding/deleting $e_C$ \emph{changes} the unique minimum.
In these configurations, when $e_C$ is out, $B$ can not be
in the unique minimum as $B\in m_S(P)$ implies $C\in m_S(P)$.
Therefore we can assume $e_C$ is in.

Note that at this point, by our restrictions, the configurations $S$ we
are considering have $e_C \in S$,  $C,B \in m_S(S)$ and if
we set ${S_0} \coloneqq  S\setminus \{e_C\}$ then $C\notin m_S({S_0})$.
These restrictions do not directly force all strict edges in $L_2$ to be in $S$.
Because our shape wraps around, configurations like the one shown in $R3.b$
can also have  $B\notin m_S({S_0})$.
Fortunately for our calculations, these type of cases cancel out because
of the following subtle argument: Assume there is some strict edge in $L_2$
that is not in $S$ and let $D$ be the minimal box of $L_2$ in that column,
connected to $L_1$ with an edge $e_D$. Then we can toggle $e_D$ without
breaking our assumptions. In particular, the edge $e_C$ changes the minimum,
whether $e_D$ is in or not---see how adding or removing $e_C$ affects
minimum with $e_D$ included in (3.c) and (3.d) below.

\begin{figure}[!ht]
\centering
\begin{align*}
&
\includegraphics[page=5,width=0.2\linewidth]{toggleProofPics}
&&
\includegraphics[page=6,width=0.2\linewidth]{toggleProofPics}
&&
\includegraphics[page=7,width=0.2\linewidth]{toggleProofPics}
&&
\includegraphics[page=8,width=0.2\linewidth]{toggleProofPics}
\\
&
\text{R3.a} &&
\text{R3.b} &&
\text{R3.c} &&
\text{R3.d} \\
\end{align*}
\end{figure}

That means we can indeed assume all strict edges on $L_2$ are included
in $S$ (see R3.e).
As a consequence, all boxes in $L_2$ are in the same equivalence
class and any strict edge other than $e_C$ between $L_1$ and $L_2$ being
included breaks our assumption that $e_C$ changes the minimum.
So we can assume $e_C$ is the only edge between $L_1$ and $L_2$
that is included, and all others are out (see R3.f).

\medskip
\noindent
\emph{Restriction 4---Strict edges connecting $L_\ell$ to non-minimal
boxes of $F$ are not in $S$:}

We can also use the same argument as we used for Restriction 2 (See R4).
\begin{figure}[!ht]
\centering
\begin{align*}
&
\includegraphics[page=9,width=0.2\linewidth]{toggleProofPics}
&&
\includegraphics[page=10,width=0.2\linewidth]{toggleProofPics}
&&
\includegraphics[page=11,width=0.2\linewidth]{toggleProofPics}
\\
&
\text{R3.e} &&
\text{R3.f} &&
\text{R4} \\
\end{align*}
\end{figure}

\textbf{Different cases for $F$:}
Note that we only have the strict edges connecting
the minimal boxes of $F$ to $L_\ell$
and the edges on $F$ left to consider, all other edges are either included or
omitted in $S$ per our restrictions so far.
Let $h,w$ denote the height and width of a loop ribbon, respectively.
Our arrangement so far uses a total of $\ell \cdot (h+1)-1$ vertical edges,
and all $M \coloneqq \ell(w+h)$ boxes of the $\ell$ loop ribbons are
included in the unique minimum.
We will finish our proof by looking at the possible
shapes for $F$ as we have done in the case $\ell=0$.

\begin{itemize}
\item
Let us first consider the case $F$ is a ribbon.
Note that we already know that the edges connecting non-minimal
boxes of $F$ to $L_\ell$ are out (See R5.a).
If $F$ consists of a single row, there is only one edge to consider,
so the total value we get is
\[
(-1)^{(\ell (h+1)-1)}M+(-1)^{\ell(h+1 )}(M+1)=(-1)^{(\ell \cdot h+ \ell)}.
\]
Otherwise, consider the rightmost minimal box on $F$.
Let $e$ be the bottommost strict edge on its column (See R5.b).
We can toggle $e$ unless the higher box it is connected to is a minimum,
which can only happen if all other previously unset edges on its column are in.
Consider the next minimum to the right on $F$, if another exists,
and let $f$ be the edge connecting it to $L_\ell$ (See R5.c).
If $e$ is in, $f$ can be toggled to give us $0$ so we can assume $e$ is out.
We can indeed continue with this argument, and assume, for all but the
leftmost minimum on $F$, the edge connecting it to $L_\ell$ and
all but the highest edge on its column are in, and the highest is out.
For the leftmost minimum, we can do the same assumption, except that
we do not know whether the highest edge on its column is in or out
(Note that this edge would be the one connected to $L_\ell$ if
the top-left box of $F$ is a minimum).
We can calculate our final sum by toggling that edge (shown marked in R5.d):

\begin{align*}
&(-1)^{(\ell (h+1)+\height(F)-1}(M+\height(F)-1)+(-1)^{(\ell \cdot (h+1)+\height(F))}(M+\height(F))\\
&=(-1)^{(\ell \cdot(h+1)+\height(F))}=(-1)^{\height(D)}.
\end{align*}

\begin{figure}[!ht]
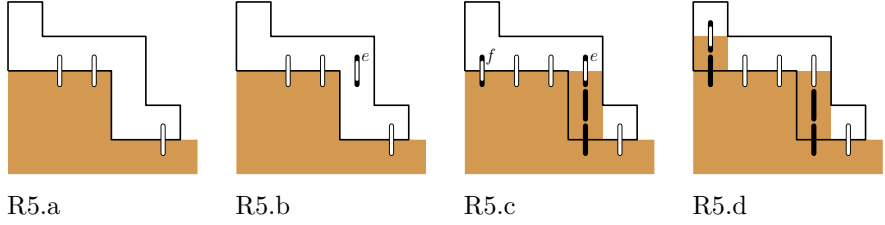

\centering
\begin{align*}
&
\includegraphics[page=12,width=0.2\textwidth]{toggleProofPics}
&&
\includegraphics[page=13,width=0.2\textwidth]{toggleProofPics}
&&
\includegraphics[page=14,width=0.2\textwidth]{toggleProofPics}
&&
\includegraphics[page=15,width=0.2\textwidth]{toggleProofPics}
\\
&
\text{R5.a} &&
\text{R5.b} &&
\text{R5.c} && 
\text{R5.d} \\
\end{align*}
\caption{The various stages for $F$.}\label{fig:R5Pics}
\end{figure}

 \item If $F$ is the union of two or more disconnected ribbons,
 we can apply the same argument.
 As the final edges are independent of each other,
 we get a total of $0$ by inclusion exclusion.
 
 \item If $F$ contains a $2{\times}2$-box $\ytableaushort{ab,cd}$, we can pick
 such a box where $c$ is a minimal box of $F$.
 For the configurations where $d$ is not part of the unique minimum,
 we can toggle the edge between $b$ and $d$ to end up with $0$.
 That means, we can add the assumption that $d$ is part of the unique minimum.
 In this case though, the  strict edge connecting $c$ to $L_l$ has no effect,
 and it can be toggled.
 That means if such a box exist, the total contribution of all configurations is $0$.

\item If $F$ is a loop ribbon, then let $e_1,e_2,\dotsc,e_h$ be the vertical edges
that lie in $F$ in order, starting from any edge of our choosing.
If we assume $e_1$ is out, then we can argue, as in the non-loop ribbon
case, that the total contribution is $(-1)^{(\ell \cdot (h+1)+h-1)}$.
Then we assume $e_1$ is in an $e_2$ is out, and get the same contribution
$(-1)^{(\ell \cdot (h+1)+h-1)}$.
Continuing along and looking at $e_1,\ldots,e_{i-1}$ in and $e_i$ out case
for each $i$ gives us $h\cdot (-1)^{(\ell \cdot h+ \ell+h-1)}$ contribution in total.
We are left with the case where all $h$ the vertical edges of $F$ are in.
In that case, let $e_0$ be one of the edges connecting a minimal box
of $F$ to $L_\ell$. In the configurations where $e_0$ does not alter the minimum,
it can be toggled, so we can limit our attention to the cases where $e_0$ alters the minimum.
That can only happen when all other edges connecting $F$ to $L_\ell$ are out.
Toggling $e_0$ and adding up the above cases gives us our total:
\[
 h\cdot (-1)^{(\ell \cdot(h+1)+h-1)}+(-1)^{(\ell \cdot (h+1)+h-1)}M+(-1)^{(\ell(h+1)+h)}(M+w+h).
\]
This simplifies to
\begin{align*}
(-1)^{(\ell \cdot(h+1)+h)}( w+h-h) &=(-1)^{(\ell\cdot (h+1)+h)}\width(F) \\
&=(-1)^{(\height(F)}\width(D).
\end{align*}
\end{itemize}

\bibliographystyle{alphaurl}
\bibliography{bibliography}

\end{document}